\title{Interpretable sets in dense o-minimal structures}
\author{Will Johnson}
\DeclareMathOperator*{\forkindeb}{\raise0.2ex\hbox{\ooalign{\hidewidth$\vert$\hidewidth\cr\raise-0.9ex\hbox{$\smile$}}}}
\newcommand{\forkindep}{\forkindeb^\textrm{\th}}
\newcommand{\Aut}{\operatorname{Aut}}
\newcommand{\dcl}{\operatorname{dcl}^{\text{eq}}}
\newcommand{\tp}{\operatorname{tp}}
\newcommand{\bd}{\operatorname{bd}}
\newcommand{\ter}{\operatorname{int}}
\newtheorem{theorem}{Theorem}[section] 
\newtheorem{lemma}[theorem]{Lemma}
\newtheorem{claim}[theorem]{Claim}
\newtheorem{definition}[theorem]{Definition}
\newtheorem{question}[theorem]{Question}
\newtheorem{remark}[theorem]{Remark}
\newtheorem{example}[theorem]{Example}
\newtheorem{proposition}[theorem]{Proposition}
\newcommand{\Qq}{\mathbb{Q}}
\newcommand{\Rr}{\mathbb{R}}
\newcommand{\Rp}{\mathbb{RP}}
\newcommand{\Zz}{\mathbb{Z}}
\newcommand{\Nn}{\mathbb{N}}
\newcommand{\Cc}{\mathbb{C}}
\begin{document}
\maketitle\unmarkedfntext{
\emph{2010 Mathematical Subject Classification}: 03C64

\emph{Key words and phrases}: o-minimality, interpretable sets, definable topologies.

The author would like to thank Kobi Peterzil for asking the question that prompted
this paper, as well as Tom Scanlon for reviewing earlier versions
of this paper that were part of the author's PhD dissertation.

Parts of this material are based upon work supported by the
  National Science Foundation Graduate Research Fellowship under Grant
  No. DGE 1106400.  Any opinion, findings, and conclusions or
  recommendations expressed in this material are those of the author
  and do not necessarily reflect the views of the National Science
  Foundation.
}

\begin{abstract}
We give an example of a dense o-minimal structure in which there is a definable quotient that cannot be eliminated, even after naming parameters.  Equivalently, there is an interpretable set which cannot be put in parametrically definable bijection with any definable set.  This gives a negative answer to a question of Eleftheriou, Peterzil, and Ramakrishnan.  Additionally, we show that interpretable sets in dense o-minimal structures admit definable topologies which are ``tame'' in several ways: (a) they are Hausdorff, (b) every point has a neighborhood which is definably homeomorphic to a definable set, (c) definable functions are piecewise continuous, (d) definable subsets have finitely many definably connected components, and (e) the frontier of a definable subset has lower dimension than the subset itself.
\end{abstract}

\section{Introduction}
Let us say that a structure $M$ has \emph{parametric elimination of
  imaginaries} if given any $M$-definable set $X$ and $M$-definable
equivalence relation $E$ on $X$, there is an $M$-definable map
eliminating the quotient $X/E$.  Replacing ``$M$-definable'' with
``0-definable'' gives the usual notion of elimination of imaginaries,
which is a stronger condition.

It is well-known that o-minimal expansions of ordered abelian groups have parametric elimination of imaginaries.  When working with o-minimal structures, it is common to assume that the structure expands an ordered abelian group, or even an ordered field.  This assumption simplifies life, and holds in most o-minimal structures arising in applications of o-minimality.  Nevertheless, some o-minimal structures do not expand ordered abelian groups, and one can pose the following question:
\begin{question} \label{q1}
  Do all o-minimal structures have parametric elimination of
  imaginaries?
\end{question}
This question was first asked by Eleftheriou, Peterzil, and Ramakrishnan in \cite{interpretable-groups}.
They gave a partial answer, proving that an o-minimal quotient $X/E$
can be eliminated whenever it admits a definable group structure, as
well as when $\dim(X/E) =1$.

We answer Question~\ref{q1} in the negative in \S\ref{sec:bad-quot}.
Specifically, we give an o-minimal expansion of $(\Rr,\le)$ in which
there is a 0-definable quotient $X/E$ which cannot be eliminated over
any set of parameters.

A structure $M$ has parametric elimination of imaginaries if every
interpretable set in $M$ can be put in definable bijection with a
definable set.  The negative answer to Question~\ref{q1} therefore
means that o-minimal structures can have exotic interpretable sets
which are intrinsically different from definable sets.

O-minimality provides many tools for working with definable sets, and
it is natural to wonder which of these tools can be generalized to
interpretable sets.  For example, Peterzil and Kamenkovich
 generalized the dimension and Euler
characteristic machinery to interpretable sets
 in \cite{interpdim} and \cite{kamenkovich}, respectively.

As a step in this direction, we show in \S\ref{sec:positive} that
interpretable sets $X/E$ in dense o-minimal theories can be given nice
definable topologies.  More precisely, we show in
Theorem~\ref{main-theorem} that the quotient topology on $X/E$ is a
Hausdorff definable topology, \emph{provided} one first discards a set
of low dimension from $X$.\footnote{Without this proviso, one can produce
pathological examples such as the line with doubled origin.  Indeed, if
$X = \Rr \times \{0,1\}$ and $E$ is the equivalence relation generated by
\[ (x,0)E(x,1) \text{ for } x \ne 0,\]
then the quotient $X/E$ is the line with doubled origin.}

Using this theorem, we show that interpretable sets admit Hausdorff definable
topologies satisfying certain ``tameness'' properties, including the
following:
\begin{itemize}
\item Every definable subset has finitely many definably connected
  components.
\item Every definable map is continuous off a set of low dimension.
\end{itemize}
For a precise statement, see Theorem~\ref{addendum}, which
is proven in \S\ref{sec:admiss}.

\subsection{Notation and conventions}
``Definable'' will mean ``definable with parameters,'' and
``$A$-definable'' will mean ``definable with parameters from $A$''.
We will write ``0-definable'' as shorthand for
``$\emptyset$-definable.''

When talking about sets, a ``definable set'' means a definable subset
of a power of the home sort, and an ``interpretable set'' means a
definable set in $T^{\text{eq}}$.  Outside of this distinction, we
will always say ``definable'' instead of ``interpretable.''  For
example, we will talk about definable subsets of interpretable sets,
and definable maps between interpretable sets, rather than
``interpretable subsets'' or ``interpretable maps''.  We will say that a subset of an interpretable set is ``ind-definable'' (over some parameters $A$) if it is a union of $A$-definable subsets.

A ``definable quotient'' is a pair $X/E$ consisting of a definable set
$X$ and a definable subset $E \subseteq X \times X$ defining an
equivalence relation on $X$.  The quotient can be ``eliminated'' if
one of the following equivalent conditions is true:
\begin{itemize}
\item There is a definable bijection between the interpretable set
  $X/E$ and some definable set $Y \subseteq M^k$.
\item There is some definable map $f : X \to M^k$ such that
  \begin{equation*}
    xEx' \iff f(x) = f(x') \qquad \forall x,x' \in X
  \end{equation*}
\end{itemize}

``O-minimal'' will mean dense o-minimal, i.e., we require o-minimal
structures to expand dense linear orders without endpoints.

In an o-minimal structure, $\dim(X)$ will denote the standard
o-minimal dimension of a definable or interpretable set $X$ (see
\cite{interpdim} for the interpretable case).  The o-minimal rank of a
finite tuple $a$ over a set of parameters $S$ will be denoted
$\dim(a/S)$; this is the minimum of $\dim(X)$ for $S$-definable $X \ni
a$.  We will write $\forkindep$ to denote thorn-forking independence,
so $a \forkindep_C B$ means $\dim(a/BC) = \dim(a/C)$.

In a topological space, the interior, boundary, frontier, and closure
of a set $X$ will be denoted $\ter(X)$, $\bd(X)$,
$\partial X$, and $\overline{X}$.  Thus
\begin{align*}
  \bd(X) &= \overline{X} \setminus \ter(X) \\
  \partial X &= \overline{X} \setminus X
\end{align*}
An ``embedding'' will be a continuous map that is a homeomorphism onto its image.

If $E$ is an equivalence relation on a set $X$, and $X' \subseteq X$,
we will write $X'/E$ to indicate $X'/(E \restriction X')$.

A map $f : P_1 \to P_2$ between two posets will be called \emph{order-preserving} if
\[ x \le y \implies f(x) \le f(y)\]
and \emph{order-reversing} if
\[ x \le y \implies f(x) \ge f(y).\]
(Usually the posets will be powersets with inclusion ordering.)

If $X$ is a definable set in a structure $M$, then $\ulcorner X \urcorner$ will denote a canonical parameter for $X$, i.e., a finite tuple from $M^{eq}$ fixed pointwise by exactly the automorphisms that fix $X$ setwise.  If $r$ is a real number, $\lceil r \rceil$ and $\lfloor r \rfloor$ will denote the ceiling and floor of $r$, respectively.

If $X$ is a definable or interpretable set in a structure $M$, a
topology on $X$ is ``definable'' if there is a definable family of
subsets of $X$ forming a basis of opens.  This means that there is a definable
relation $U \subseteq X \times M^k$ for which the sets
\begin{equation*}
  U_{\vec{a}} := \{x \in X | (x,\vec{a}) \in U\} \text{ for } \vec{a} \in M^k
\end{equation*}
form a basis for the topology.  A ``definable topological space'' is
an interpretable set together with a definable topology.


If $X$ is a definable topological space in an o-minimal structure
$(M,\le,\ldots)$, we will say that $X$ is \emph{Euclidean} at a point
$x \in X$ if there is a definable homeomorphism between an open
neighborhood of $x$ in $X$ and an open subset of $M^k$ for some $k$.
We will say that $X$ is ``locally Euclidean'' if $X$ is Euclidean at
every $x \in X$.  (Note that $k$ might depend on $x$.)

\subsection{Statement of results}

\begin{proposition}
  There is a (dense) o-minimal structure $M$ containing an
  interpretable set which cannot be put in $M$-definable bijection
  with any $M$-definable set.
\end{proposition}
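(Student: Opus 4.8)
The plan is to construct everything by hand, so the proof breaks into three tasks: (1) define a dense o-minimal expansion $M$ of $(\Rr,\le)$; (2) exhibit a $0$-definable set $X \subseteq \Rr^n$ together with a $0$-definable equivalence relation $E$ on it; and (3) show that $X/E$ cannot be eliminated over any parameters. It is worth first recording the constraints forced by the results recalled in the introduction. Since o-minimal expansions of ordered abelian groups have parametric elimination of imaginaries, and since naming a fixed tuple of parameters does not affect whether a structure has parametric elimination of imaginaries, we must arrange: \textbf{(c)} for every finite tuple $\bar a$, the structure $(M,\bar a)$ does not expand an ordered abelian group. Since dimension-$0$ quotients are trivially eliminable and Eleftheriou--Peterzil--Ramakrishnan eliminated all dimension-$1$ quotients and all quotients carrying a definable group structure, we must also have \textbf{(a)} $\dim(X/E)\ge 2$ and \textbf{(b)} $X/E$ carries no definable group structure. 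These constraints largely dictate the shape of the example.

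For task (1), $M$ will be $(\Rr,\le)$ expanded by a few $0$-definable relations of finite arity --- morally, the ``affine/difference structure'' of the line presented so rigidly that no origin, and no finite set of points, can be distinguished from it. The first thing one tries --- adjoining the $4$-ary relation ``$a-b=c-d$'' --- violates (c): after naming a single point $p$, the formula ``$z-x=y-p$'' defines a binary operation making $(M,p)$ an ordered abelian group with identity $p$. So the relations defining $M$ must be chosen so that the ordered abelian group that \emph{would} materialize from eliminating $X/E$ is one that $M$ provably does not interpret over any parameters; producing such a rigid ``ghost of a group'' is the crux of the construction. To check that the resulting $M$ is o-minimal I would either exhibit a language in which $M$ (after naming a bounded number of parameters, if convenient) has quantifier elimination and then read off that $1$-variable definable sets are finite unions of points and open intervals, or realize $M$ as a reduct of a known o-minimal structure --- an ordered vector space, or a real closed field --- and use the fact that reducts of o-minimal structures are o-minimal.

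Task (3) is the heart of the matter and the main obstacle. Assume toward a contradiction that $X/E$ is eliminated over a finite tuple $\bar a$; equivalently, there is an $\bar a$-definable $f \colon X \to \Rr^k$ with $xEy \iff f(x)=f(y)$. Passing to a monster model, choose $x \in X$ with $\dim(x/\bar a)$ maximal, so that the class $e := [x]\in X/E$ is generic over $\bar a$. The plan is to substitute $f$ back into the $0$-definable data defining $E$: because (by construction) the $E$-classes are the fibres of a ``ghost'' affine operation, a global $\bar a$-definable section such as $f$ promotes this ghost to an honest $\bar a$-definable ordered abelian group operation on the home sort $M$, contradicting (c). An alternative route, bypassing any appeal to group structure, is to produce an automorphism of the monster model fixing $\bar a$ (together with whatever generic data is needed) that carries a representative of $e$ to an $E$-inequivalent element: since $f$ must send both to $f(x)$, this contradicts the fact that $f$ separates $E$-classes. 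In either approach the real difficulty lies in choosing the defining relations of $M$ so that this rigidity genuinely persists after naming \emph{arbitrary} finite parameter sets; once $M$ is engineered correctly, verifying o-minimality and carrying out the bookkeeping around $X$, $E$, and $f$ should be comparatively routine.
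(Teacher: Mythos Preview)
Your proposal is a plan, not a proof: the construction of $M$, which you yourself identify as ``the crux,'' is never carried out, and the argument for non-eliminability is left at the level of a strategy.  More importantly, your ``alternative route'' (b) contains a genuine error.  A single automorphism $\sigma$ fixing $\bar a$ and sending a representative $x$ of $e$ to an $E$-inequivalent element does \emph{not} contradict the existence of an $\bar a$-definable $f$ separating $E$-classes: from $\sigma(\bar a)=\bar a$ you get $f\circ\sigma=\sigma\circ f$, not $f\circ\sigma=f$, so there is no reason $f(\sigma(x))$ should equal $f(x)$.  The correct automorphism test (Lemma~\ref{ei-fail-test} in the paper) uses \emph{two} automorphisms $\sigma_1,\sigma_2$ fixing $\bar a$, with the property that every home-sort element fixed by $\sigma_1$ is also fixed by $\sigma_2$, while some $e\in X/E$ is fixed by $\sigma_1$ but moved by $\sigma_2$; then $f(e)$, being a home-sort tuple fixed by $\sigma_1$, is fixed by $\sigma_2$, forcing $\sigma_2(e)=e$, a contradiction.

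The paper's actual construction realizes $M$ as $\Zz$-many copies of $\Rp^1$ glued order-theoretically, with a $5$-ary relation recording, over each ``basepoint'' $x_0$, the $4$-ary difference relation on the interval $(x_0,x_0+\pi)$ transported through a linear fractional transformation sending $x_0$ to $\infty$.  O-minimality comes from realizing this as a reduct of $\Rr_{\mathrm{an}}$.  The key trick you are missing for handling \emph{arbitrary} parameter sets is this: pass to an $\aleph_1$-saturated ultrapower $M^*$, and glue the identity on the convex hull of $M$ with the (lifts of the) two automorphisms $\tau_1,\tau_2$ on the part of $M^*$ above $M$, across the Dedekind cut at the top of $M$.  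The glued automorphisms $\sigma_1,\sigma_2$ then fix all of $M$ pointwise --- hence any parameter set $\bar a$ from $M$ --- while still witnessing the two-automorphism test on a fibre $X_a/E_a$ living above some $a>M$.  Your route (a), promoting $f$ to a definable group law, may be workable but is not what the paper does and would require its own nontrivial argument.
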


\begin{theorem} \label{main-theorem}
  Fix an o-minimal structure $M$.  Let $X \subseteq M^k$ be a
  definable set and $E$ be a definable equivalence relation on $X$.
  Then we can write $X$ as a disjoint union $X' \cup X_0$ satisfying
  the following conditions:
  \begin{enumerate}
  \item \label{cond1} $X'$ is open in $X$
  \item \label{cond2} $\dim(X_0) < \dim(X)$ or $X_0 = \emptyset$.
  \item \label{cond3} The quotient topology on $X'/E$ is definable,
    Hausdorff, and locally Euclidean
  \item \label{cond4} If $X''$ is any open subset of $X'$, the map of
    quotient spaces
    \begin{equation*}
      X''/E \hookrightarrow X'/E
    \end{equation*}
    is continuous, and in fact an open embedding.
  \end{enumerate}
\end{theorem}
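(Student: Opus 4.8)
The plan is to work generically and to model $X/E$ near its generic point on a definable transversal to the partition of $X$ into $E$-classes. First I would apply o-minimal cell decomposition together with the fiberwise-dimension theorem to the definable family $\{[x]_E : x \in X\}$ (equivalently, to $E \subseteq X \times X$ with its first-coordinate projection): after discarding a definable set of dimension $< \dim X$, all classes $[x]_E$ acquire a common dimension $d$. A further round of cell decomposition and uniform finiteness, using the fact that a top-dimensional cell cannot lie in the frontier of a lower-dimensional cell, produces a definable $X' \subseteq X$, open in $X$, with $\dim(X \setminus X') < \dim X$, on which some fixed coordinate projection $\pi : M^k \to M^d$ restricts to a local homeomorphism on each $E$-class, and on which $X'$ is, locally around each point, a product of an open subset of $M^d$ (the ``leaf direction'') with an $n$-dimensional ``transverse'' piece, where $n := \dim X - d = \dim(X/E)$.

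Next I would build charts from transversals. For $x \in X'$, set $S_x := \{y \in X' : \pi(y) = \pi(x)\}$, cut down to a suitable definable neighborhood of $x$; generically $S_x$ is an $n$-dimensional definable set meeting every $E$-class near $x$ in exactly one point. Trivializing the partition of $X'$ by $E$-classes over such a transversal (via the o-minimal trivialization theorem) gives, for each $x$ in an open dense definable set, a definable homeomorphism from a saturated --- i.e.\ $E$-invariant --- open neighborhood $W_x$ of $[x]_E$ in $X'$ onto $S_x \times L$, with $L$ the relevant portion of the class. Composing with the projection to $S_x$, which is constant on $E$-classes, gives a definable atlas of charts $\phi_x : V_x := W_x/E \to S_x$, and each $S_x$ is itself definably homeomorphic to an open subset of $M^n$. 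The transition maps are definable, so by the o-minimal generic continuity theorem one more discard of a definable set of dimension $< \dim X$ makes every transition map continuous. The atlas then defines a definable, locally Euclidean topology on $X'/E$, and checking (again from the local triviality) that the saturated opens of $X'$ are exactly the preimages of atlas-opens identifies this topology with the quotient topology; this gives conditions \ref{cond1}, \ref{cond2}, and the ``definable, locally Euclidean'' parts of \ref{cond3}.

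The hard part will be Hausdorffness: a space glued from charts need not be Hausdorff (the line with doubled origin being exactly such a failure), so one must argue that the ``bad locus''
\[ B := \{\, x \in X' : \exists\, y \in X',\ \neg(xEy)\ \text{and}\ [x]_E,\ [y]_E\ \text{admit no disjoint saturated open neighborhoods}\,\} \]
is definable of dimension $< \dim X$. Adjoining $B$ to the discarded set then keeps the remainder open in $X$ (by the product structure) and of full dimension, and makes the surviving quotient Hausdorff. To bound $\dim B$, I would take a generic $x \in B$ witnessing non-separation from some $[y]_E$: then $x$ is a limit of $E$-classes accumulating simultaneously onto $[x]_E$ and onto $[y]_E$ while missing $x$ itself, and running this through the transverse product structure of the first step should force $x$ to lie on the frontier of a definable subset of the $n$-dimensional transverse direction; since frontiers of definable sets drop dimension, $\dim B < \dim X$. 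I expect making this dimension count work in full uniformity to be the most delicate point of the whole argument, and it is closely tied to the frontier-dimension statement advertised in the abstract.

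Finally, condition \ref{cond4} should fall out of the construction: for $X''$ open in $X'$, each chart $\phi_x$ restricts to a chart of $X''/E$ with open image (as $X''$ is open in $X'$ and $\phi_x$ is an open map), so $X''/E \hookrightarrow X'/E$ is, in charts, the inclusion of an open subset --- injective because an $E$-class of $X''$ is the restriction of a unique $E$-class of $X'$, and continuous and open, hence an open embedding.
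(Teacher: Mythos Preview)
Your geometric picture is natural, but there is a genuine gap: the argument leans on the o-minimal trivialization theorem, and earlier on a uniform local product decomposition of $X'$ into ``leaf'' and ``transverse'' directions.  Trivialization, in the form of van den Dries's Chapter~9, is proved for o-minimal expansions of real closed fields; more generally its proof requires definable Skolem functions.  The present theorem is stated for an arbitrary dense o-minimal structure, and the paper is precisely about the situation where the structure need not expand an ordered group, let alone a field.  In that generality trivialization is not available, and indeed the existence of a definable transversal $S_x$ meeting each nearby $E$-class in exactly one point is already a definable-choice statement.  So the step that produces your charts --- and hence the identification of the quotient topology with the atlas topology, and the later Hausdorffness count, both of which are predicated on the local product structure --- is unjustified in the required generality.

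The paper takes a different route that avoids any form of trivialization or Skolem functions, working instead with generic points and thorn-independence.  It proceeds in three stages.  First it shows that on a full open set the equivalence relation is \emph{open} (the $E$-saturation of any open set is open), by checking that for a generic $a$, any $E$-equivalent $b$, and any neighborhood $B \ni b$, one has $a \in \ter(B^E)$; the device is to shrink $B$ so that $\ulcorner B \urcorner \forkindep ab$, whereupon failure would place $a$ on a frontier of lower dimension.  Second, Hausdorffness: for two generic $a,b$ in distinct classes, a dimension count gives $a \notin \overline{\{b\}^E}$, and one then picks $N_1 \ni a$ disjoint from $\{b\}^E$ with $\ulcorner N_1 \urcorner \forkindep b$, so that $b$ avoids $\partial(N_1^E)$ and hence $\overline{N_1^E}$.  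Third, local Euclideanity at a generic $e \in X/E$: take a second lift $b$ of $e$ with $b \forkindep_e a$, reorder the coordinates of $b$ to split off a part in $\dcl(b_1 e)$, and obtain a continuous injection from a closed box into $X/E$; definable compactness of the box then makes this a homeomorphism onto its image.  The recurring trick throughout is to choose neighborhoods whose canonical parameters are independent from the points under consideration, which substitutes for the transversal you were trying to build.  Condition~\ref{cond4} then follows formally from $E'$ being an open equivalence relation.
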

Condition~\ref{cond2} means that $X'$ is ``generic'' in $X$ in a
certain sense.  Condition~\ref{cond4} shows that the quotient topology
is somewhat independent of the choice of $X'$: as long as we have
chosen a sufficiently small generic open subset of $X$, the quotient
topology will agree.

\begin{definition}
  A Hausdorff topology on an interpretable set $Y$ is
\emph{admissible} if there is a definable surjection $f : X
\twoheadrightarrow Y$ where $X$ is a definable subset of $M^n$, such that $f$ is a continuous open map with respect to the standard topology on $X$.
\end{definition}
The
next result says that admissible locally Euclidean topologies exist,
and share many properties with the standard topology on
$M^k$.
\begin{theorem} \label{addendum}
  Fix an o-minimal structure $M$.
  \begin{enumerate}
  \item Every interpretable set can be endowed with an admissible locally Euclidean
    topology.
  \item Admissible topologies are definable.
  \item If $Y$ is an admissible locally Euclidean topological space and $D$ is
    a non-empty definable subset of $Y$, then
    \begin{enumerate}
    \item $D$ has finitely many definably connected components.
    \item $\dim \partial D < \dim D$.
    \item There is a point $p \in D$ such that $\dim N \cap D = \dim
      D$ for every neighborhood $N$ of $p$.  In other words, the local
      dimension of $D$ at $p$ equals the global dimension of $D$.
    \end{enumerate}
  \item If $f : Y \to Y'$ is a definable map between two admissible
    locally Euclidean topological spaces, then $f$ is continuous on a
    dense open subset of $Y$.  Moreover, $Y$ can be written as a
    finite disjoint union of locally closed definable subsets, on
    which the restriction of $f$ is continuous.
  \end{enumerate}
\end{theorem}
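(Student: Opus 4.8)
The plan is to deduce Theorem~\ref{addendum} from Theorem~\ref{main-theorem}, treating the latter as a black box. Given an interpretable set $Y = X/E$ with $X \subseteq M^n$ definable, I would like to apply Theorem~\ref{main-theorem} to get a nice generic open piece; the problem is that the discarded piece $X_0$ has image in $Y$ of dimension $< \dim Y$, which need not be all of the complement of the image of $X'$, and more importantly the image of $X_0$ still needs a topology. So the first step is a dimension induction: apply Theorem~\ref{main-theorem} to $X$ to obtain $X' \cup X_0$, let $U = X'/E$ with its admissible locally Euclidean quotient topology, and let $Z = Y \setminus U$, a definable subset of $Y$ with $\dim Z < \dim Y$. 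The preimage of $Z$ in $X$ is a definable set of dimension $< \dim X$ (or at least we can arrange this by choosing $X'$ large enough, or absorb the low-dimensional fibers), on which $E$ restricts to an equivalence relation, so by induction on $\dim$ we get an admissible locally Euclidean topology on $Z$. The main obstacle, and the heart of this first part, is gluing these two topologies into a single admissible locally Euclidean topology on $Y = U \sqcup Z$: I would take the topology whose basic opens are the basic opens of $U$ together with sets of the form $(\text{basic open of } Z) \cup (\text{suitable open of } U)$, chosen to make $Z$ closed but to give each point of $Z$ a Euclidean neighborhood; condition~\ref{cond4} of Theorem~\ref{main-theorem} is what guarantees compatibility and that the combined surjection from a disjoint union of the two definable domains is still continuous and open. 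Checking Hausdorffness of the glued space and that the gluing can be done definably and uniformly is the fiddly part.

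For part (2), admissibility gives a continuous open surjection $f : X \twoheadrightarrow Y$ with $X \subseteq M^n$ standard; I would show the pushforward of the standard definable basis on $X$ — i.e., the family $\{f(V) : V \text{ basic open in } X\}$ — is a definable basis for the topology on $Y$, using openness of $f$ for ``basis'' and continuity plus surjectivity to see these generate the topology, and the fact that $f$ and its graph are definable to see the family is definable.

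For part (3), I would pull everything back along the admissible surjection $f : X \twoheadrightarrow Y$. Given a non-empty definable $D \subseteq Y$, its preimage $f^{-1}(D) \subseteq M^n$ is definable, so by standard o-minimality it has finitely many definably connected components, finite-to-one generic behavior, $\dim \partial f^{-1}(D) < \dim f^{-1}(D)$, and a point of full local dimension. Since $f$ is continuous, it maps definably connected sets to definably connected sets, giving (a) (after checking the images of the components cover $D$ and there are finitely many); since $f$ is open and continuous, $f^{-1}(\overline{D}) \supseteq \overline{f^{-1}(D)}$ and openness controls the frontier, giving (b); and a full-local-dimension point of $f^{-1}(D)$ maps, by openness of $f$, to a full-local-dimension point of $D$ — here I use that $\dim$ for interpretable sets (as in \cite{interpdim}) is computed via such surjections, and that $\dim f^{-1}(N) \ge \dim N$ for open $N$. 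The dimension bookkeeping relating $\dim f^{-1}(D)$ to $\dim D$ across the fibers of $f$ is where I would be most careful.

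For part (4), given definable $f : Y \to Y'$ between admissible locally Euclidean spaces, choose admissible surjections $g : X \twoheadrightarrow Y$ and $g' : X' \twoheadrightarrow Y'$. Because $Y'$ is locally Euclidean I can, after stratifying $Y'$ into finitely many definable locally closed pieces, assume each piece embeds definably into some $M^{k}$, reducing continuity of $f$ into $Y'$ to continuity of finitely many coordinate functions $Y \to M$; and because $Y$ is locally Euclidean I can likewise cover $Y$ by finitely many definable locally closed pieces each definably homeomorphic to a definable subset of some $M^m$. On each such piece the map becomes an ordinary definable function between (locally closed) definable subsets of Euclidean space, to which the o-minimal cell decomposition / piecewise-continuity theorem applies, giving the finite decomposition into locally closed pieces on which $f$ is continuous; taking the union of the top-dimensional interiors yields the dense open set on which $f$ is continuous, using part (3)(b) to see that the lower-dimensional leftover has empty interior. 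The only real subtlety is checking that ``definably homeomorphic to a definable subset of $M^m$'' interacts correctly with the admissible topology — i.e., that a local Euclidean chart is compatible with the quotient topology — which again follows from part (2) and the openness of the admissible surjection.
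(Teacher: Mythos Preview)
Your approach to part~(1) is significantly overcomplicated, and the ``gluing'' you describe as ``the main obstacle'' is both unnecessary and likely impossible in general. The paper (Proposition~\ref{adm-exist}) simply takes the \emph{disjoint union} (topological sum) of the admissible locally Euclidean topologies on $U = f(X')$ and on $Z = Y \setminus U$: both pieces are declared clopen in $Y$. This is immediately Hausdorff and locally Euclidean (each point lies in one of the two clopen pieces and inherits its Euclidean chart from there), and the map $X_1 \coprod X_2 \twoheadrightarrow U \coprod Z = Y$ is a definable continuous open surjection from a definable subset of some $M^m$, so admissibility is automatic. There is no need to produce Euclidean neighborhoods of points of $Z$ that reach into $U$; indeed, if one could always do this in a way compatible with the fixed topology on $U$, one would be close to forcing $Y$ to be a Peterzil--Steinhorn definable space in a strong sense, which the counterexample of \S\ref{sec:bad-quot} suggests cannot be expected. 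Condition~(\ref{cond4}) of Theorem~\ref{main-theorem} plays no role here.

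Your approach to part~(3b) via pulling back along the admissible surjection $f$ has a genuine gap in the ``dimension bookkeeping'' you flag. You correctly observe (using that $f$ is open and continuous) that $f^{-1}(\partial D) = \partial f^{-1}(D)$, and hence $\dim f^{-1}(\partial D) < \dim f^{-1}(D)$ by the classical frontier inequality. But the fiber dimensions of $f$ over $\partial D$ may be strictly larger than those over $D$, so one cannot directly conclude $\dim \partial D < \dim D$. The paper avoids this entirely by a different route: it first proves~(3c) (Proposition~\ref{local-dim}) using a generic point $p \in \partial D$ in a saturated extension together with Lemma~\ref{indy2} (neighborhoods independent from $p$), and then deduces~(3b) (Proposition~\ref{dim-bound}) by taking a point of full local dimension in $\partial D$, passing to a local Euclidean chart at that point, and applying the classical frontier inequality \emph{inside the chart}. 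Thus local Euclideanity is used essentially for~(3b), not merely incidentally. Your treatment of~(3c) has the same fiber-dimension issue.

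Parts~(2), (3a), and~(4) are essentially as in the paper (Lemma~\ref{quot-def}, Proposition~\ref{finite-pi-0}, Proposition~\ref{oof}), though for~(4) the paper again works with generic points and Lemma~\ref{indy2} rather than a global stratification, using Lemma~\ref{thorn} to ensure the domain can be taken locally Euclidean after discarding a lower-dimensional set.
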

Note that there are other ways to put locally Euclidean definable
topologies on interpretable sets, such as the discrete topology.
However, the discrete topology fails to satisfy many of the conditions
listed above, such as 3a, 3c, and 4.

\section{A pathological quotient} \label{sec:bad-quot}
In this section, we give an example of an o-minimal structure in which
parametric elimination of imaginaries fails, namely
\begin{equation*}
  (\Rr,\le,R)
\end{equation*}
where $R(x_0,\ldots,x_5)$ is the 5-ary predicate holding if and only if
\begin{equation*}
  \frac{\cos(x_1 - x_0)}{\sin(x_1 - x_0)} - \frac{\cos(x_2 -
    x_0)}{\sin(x_2 - x_0)} = \frac{\cos(x_3 - x_0)}{\sin(x_3 - x_0)} -
  \frac{\cos(x_4 - x_0)}{\sin(x_4 - x_0)} \text{ and } \bigwedge_{i =
    1}^4 x_0 < x_i < x_0 + \pi.
\end{equation*}
\subsection{A toy example} \label{sec:toy}
We first discuss the simplest example of an o-minimal theory which
lacks elimination of imaginaries.  Let $M = (\Rr,\le,E)$, where
$E(\vec{x})$ is the 4-ary relation
\begin{equation*}
  E(x_1,\ldots,x_4) \iff x_1 - x_2 = x_3 - x_4.
\end{equation*}
The relation $E$ defines an equivalence relation on the set $X :=
\Rr^2$.  The quotient $X/E$ cannot be 0-definably eliminated, and this
can be seen using automorphisms.  Suppose for the sake of
contradiction that there is a 0-definable injection $X/E
\hookrightarrow M^k$ for some $k$.  Consider the automorphisms
\begin{align*}
  \sigma_1(x) &:= x + 1 \\
  \sigma_2(x) &:= 2x.
\end{align*}
of the structure $M$.  Let $e \in X/E \subseteq M^{\text{eq}}$ be the
$E$-equivalence class of $(0,1) \in X$.  Then one verifies easily that
\begin{align*}
  \sigma_1(e) & = e \\
  \sigma_2(e) & \ne e
\end{align*}
Let $\vec{r}$ denote $f(e)$.  Then $e$ and $\vec{r}$ are
inter-definable over $\emptyset$, so
\begin{align*}
  \sigma_1(\vec{r}) & = \vec{r} \\
  \sigma_2(\vec{r}) & \ne \vec{r}
\end{align*}
However, $\vec{r}$ is a tuple of elements from $M$.  By inspection,
every element of $M$ fixed by $\sigma_1$ is fixed by $\sigma_2$,
yielding a contradiction.\footnote{In this toy example, $\sigma_1$ has no fixed points in $M$, and so $\sigma_2$'s only role is to rule out the possibility that $\vec{r}$ is the tuple of length 0.  Later, we will use the same argument in a more complicated situation where $\sigma_1$ has fixed points.}

The structure $M$ gives an example of an o-minimal theory which does not have
elimination of imaginaries.  Nevertheless, after naming two constants,
this example has a strong form of elimination of imaginaries: every non-empty definable set $X$ contains an $\ulcorner X \urcorner$-definable point.  So this is not yet an example of an o-minimal
structure in which parametric elimination of imaginaries fails.
However, this toy example will play a role in the construction below.

For future reference, we record the configuration that showed that a
quotient was not eliminated:
\begin{lemma} \label{ei-fail-test}
  Let $M$ be a structure, $A$ be a small set of parameters, and $X/E$
  be an $A$-definable quotient.  Suppose there exist $\sigma_i \in
  \Aut(M/A)$ for $i = 1,2$ such that
  \begin{itemize}
  \item Every element of $M$ fixed by $\sigma_1$ is fixed by $\sigma_2$
  \item Some element of $X/E$ fixed by $\sigma_1$ is not fixed by $\sigma_2$.
  \end{itemize}
  Then there is no $A$-definable injection $X/E \hookrightarrow M^k$,
  so the quotient $X/E$ cannot be eliminated over $A$.
\end{lemma}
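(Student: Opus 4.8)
The plan is to abstract verbatim the argument from the toy example in \S\ref{sec:toy}. Suppose toward a contradiction that $X/E$ \emph{can} be eliminated over $A$; by the second of the two equivalent conditions in the notation section, this is the same as the existence of an $A$-definable injection $f : X/E \hookrightarrow M^k$ for some $k$. I will derive a contradiction from $f$ together with the two $\sigma_i$.

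The key steps, in order. First, recall that each $\sigma_i \in \Aut(M/A)$ extends canonically to an automorphism of $M^{\text{eq}}$ fixing $A$ pointwise, and that since $f$ is $A$-definable this extension commutes with $f$: for every $e \in X/E$ we have $\sigma_i(f(e)) = f(\sigma_i(e))$. Second, pick $e \in X/E$ witnessing the second hypothesis, so $\sigma_1(e) = e$ but $\sigma_2(e) \ne e$, and set $\vec{r} := f(e) \in M^k$. Because $f$ commutes with $\sigma_1$ and $\sigma_1(e) = e$, we get $\sigma_1(\vec{r}) = f(\sigma_1(e)) = f(e) = \vec{r}$, so every coordinate of $\vec{r}$ is fixed by $\sigma_1$; by the first hypothesis every coordinate of $\vec{r}$ is therefore fixed by $\sigma_2$, hence $\sigma_2(\vec{r}) = \vec{r}$. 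Third, on the other hand, $\sigma_2(\vec{r}) = f(\sigma_2(e))$, and since $f$ is injective while $\sigma_2(e) \ne e$ we have $f(\sigma_2(e)) \ne f(e) = \vec{r}$, i.e. $\sigma_2(\vec{r}) \ne \vec{r}$. The two computations of $\sigma_2(\vec{r})$ contradict each other, so no such $f$ exists and $X/E$ cannot be eliminated over $A$.

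There is no genuine obstacle here; the content is entirely in having already found the right abstract hypotheses. The only points needing a little care are the bookkeeping that $A$-definability of $f$ (as a subset of $(X/E) \times M^k$) is exactly what makes $f$ commute with $\Aut(M/A)$ acting on $M^{\text{eq}}$, and noting that injectivity of $f$ is used in precisely one place, namely to pass from $\sigma_2(e) \ne e$ to $\sigma_2(\vec{r}) \ne \vec{r}$. The value of isolating the statement as a lemma is purely that this verification can then be invoked unchanged in the more elaborate construction of \S\ref{sec:bad-quot}, where $\sigma_1$ will have fixed points.
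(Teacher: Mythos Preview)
Your proposal is correct and is precisely the argument the paper intends: the lemma is stated without proof, introduced as ``record[ing] the configuration'' from the toy example, and your write-up is a faithful abstraction of that argument. The only cosmetic difference is that the paper phrases the key step as ``$e$ and $\vec{r}$ are inter-definable over $\emptyset$'' (here, over $A$), whereas you unpack this into $f$ commuting with $\Aut(M/A)$ plus injectivity of $f$; these are equivalent formulations of the same observation.
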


\subsection{Preliminaries}
Let $\Rp^1 = \Rr \cup \{\infty\}$ be the real projective line.  The
group of linear fractional transformations $x \mapsto \frac{ax + b}{cx
  + d}$ acts transitively on $\Rp^1$, and the stabilizer of $\infty$
is exactly the group of affine transformations $x \mapsto ax + b$.

For $x_0,\ldots,x_5 \in \Rp^1$, let $P(x_0,\ldots,x_4)$ indicate that
\begin{equation*}
  f(x_1) - f(x_2) = f(x_3) - f(x_4)
\end{equation*}
for any/every linear fractional transformation $f$ mapping $x_5$ to
$\infty$.  This is well-defined because $f$ is determined up to an
affine transformation, and affine transformations preserve the 4-ary
relation $y_1 - y_2 = y_3 - y_4$.

\begin{remark}\label{proto-affine}
  Any linear fractional transformation (and in particular, any affine
  transformation) preserves the predicate $P$.
\end{remark}

We will write $\cot \theta$ and $\tan \theta$ for the cotangent and
tangent of the angle $\theta$.
\begin{remark}\label{cot}  
  For fixed $\alpha \in \Rr$, there is a linear fractional
  transformation mapping $\tan x \mapsto \cot( x - \alpha)$, by the
  trigonometric angle-sum formulas.  This transformation sends $\tan
  \alpha \mapsto \cot ( \alpha - \alpha) = \infty$, and so
  \begin{align*}
    & P(\tan \alpha ,\tan x_1 ,\ldots,\tan x_4 ) \\
    & \iff \cot(x_1 - \alpha) - \cot(x_2 - \alpha) = \cot(x_3 - \alpha) - \cot(x_4 - \alpha)
  \end{align*}
\end{remark}

We also record the trivial example
\begin{equation}
  P(\infty, x_1, \ldots, x_4) \iff x_1 - x_2 = x_3 - x_4
  \label{infty-case}
\end{equation}

\subsection{Details of the construction}
Let $M$ be the structure $(\Rr,\le,\iota,\tilde{P})$ where
\begin{itemize}
\item $\iota(x) = x + \pi$
\item $\tilde{P}(x_0,x_1,\ldots,x_4)$ holds if
  \begin{equation*}
    P(\tan(x_0),\ldots,\tan(x_4)) \wedge \bigwedge_{i = 1}^4 x_0 < x_i
    < \iota(x_0).
  \end{equation*}
  By Remark~\ref{cot}, $\tilde{P}(x_0,\ldots,x_4)$ holds if and only if
  $\{x_1,\ldots,x_4\} \subseteq (x_0,x_0+\pi)$ and
  \begin{equation*}
    \cot(x_1 - x_0) - \cot(x_2 - x_0) = \cot(x_3 - x_0) - \cot(x_4 - x_0)
  \end{equation*}
\end{itemize}
Let $N$ be the structure $(\Zz \times \Rp^1, \le, \iota, \tilde{P})$ where
\begin{itemize}
\item $\le$ is the lexicographic ordering on $\Zz \times \Rp^1$, where $\Rp^1$ is ordered by putting $\infty > \Rr$.
\item $\iota$ is the map $(n,x) \mapsto (n+1,x)$
\item $\tilde{P}(x_0,x_1,\ldots,x_4)$ holds if
  \begin{equation*}
    P(\pi(x_0),\ldots,\pi(x_4)) \wedge \bigwedge_{i = 1}^4 x_0 < x_i < \iota(x_0)
  \end{equation*}
  where $\pi : \Zz \times \Rp^1 \to \Rp^1$ is the projection.
\end{itemize}
It is easy to verify that there is an isomorphism $M \stackrel{\sim}{\to} N$ given by
\begin{equation*}
  x \mapsto \left( \left\lceil \frac{x}{\pi} + \frac{1}{2}
  \right\rceil , \tan x \right).
\end{equation*}
The map preserves $\tilde{P}$ essentially because the following diagram commutes
  \begin{equation*}
	  \xymatrix{M \ar[r] \ar[rd]_{\tan} & N \ar[d]^{\pi} \\ & \Rp^1}.
  \end{equation*}

The two structures $M$ and $N$ are o-minimal, because $M$ is a definable reduct of
\begin{equation*}
  (\Rr,\le,+,\cdot,\sin \restriction [0,\pi], \cos \restriction [0,\pi]),
\end{equation*}
which is o-minimal by Gabrielov's theorem (see e.g. Theorem 4.6 in
\cite{denef-vdd}).

For any $a \in M$, let
\begin{equation*}
  X_a = \{(x,y) : a < x < y < \iota(a)\} \subseteq M^2
\end{equation*}
and let $E_a$ be the equivalence relation on $X_a$ given by
\begin{align*}
  (x,y)E_a(x',y') &\iff \tilde{P}(a,x,y,x',y') \\ & \iff \cot(x - a) -
  \cot(y - a) = \cot(x' - a) - \cot(y' - a)
\end{align*}
Via the isomorphism, the same definitions make sense in $N$.
\begin{example}
  \label{example-play-ugly}
  In the structure $N$, consider the case $a = (n-1,\infty)$.  The
  open interval from $a$ to $\iota(a) = (n,\infty)$ consists of points
  $(n,x)$ with $x \in \Rr$.  Abusing notation and identifying $(n,x)$
  with $x$, we have
  \begin{align*}
    X_a &= \{(s,t) \in \Rr^2 : s < t\} \\
    (s,t)E_a(s',t') &\iff s - t = s' - t' \\
  \end{align*}
  So $X_a/E_a$ is the toy example of \S\ref{sec:toy}.
\end{example}

\begin{lemma}\label{first-taus}
  In the structures $M$ and $N$, there are automorphisms $\tau_1,
  \tau_2$ such that
  \begin{enumerate}
  \item Every element of the home sort fixed by $\tau_1$ is fixed by
    $\tau_2$
  \item The set of elements fixed by $\tau_1$ is unbounded above
  \item If $a$ is fixed by $\tau_1$, then under the induced action
    on $M^{\text{eq}}$ or $N^{\text{eq}}$, $\tau_1$ fixes every element of $X_a/E_a$
    and $\tau_2$ fixes no elements of $X_a/E_a$.
  \end{enumerate}
\end{lemma}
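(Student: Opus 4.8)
The plan is to write down $\tau_1$ and $\tau_2$ explicitly as ``affine'' automorphisms of $N$; since $M$ and $N$ are isomorphic, automorphisms of $N$ transport to automorphisms of $M$ preserving all the relevant data, so it suffices to work in $N$. First I would check that for every $c > 0$ and $d \in \Rr$ the map $\sigma_{c,d} \colon N \to N$, $(n,x) \mapsto (n, cx+d)$, is an automorphism of $N$, where $z \mapsto cz+d$ is understood as a linear fractional transformation of $\Rp^1$ (so that it fixes $\infty$). This has three ingredients: $\sigma_{c,d}$ commutes with $\iota$, which only affects the $\Zz$-coordinate; $\sigma_{c,d}$ respects the lexicographic order, because $z \mapsto cz+d$ is increasing on $\Rr$ (as $c > 0$) and fixes the top point $\infty$; and $\sigma_{c,d}$ preserves $\tilde P$, since the betweenness conjunct $x_0 < x_i < \iota(x_0)$ is preserved by order together with commutation with $\iota$, while the conjunct $P(\pi x_0, \ldots, \pi x_4)$ is preserved because $z \mapsto cz+d$ is a linear fractional transformation and hence preserves $P$ by Remark~\ref{proto-affine}.

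I would then take $\tau_1 := \sigma_{1,1}$, the translation $(n,x) \mapsto (n, x+1)$, and $\tau_2 := \sigma_{2,0}$, the doubling $(n,x) \mapsto (n, 2x)$. The fixed points of $\tau_1$ on the home sort are exactly the elements $(n, \infty)$: there is no $x \in \Rr$ with $x+1 = x$, and $\infty + 1 = \infty$. This set $\{(n,\infty) : n \in \Zz\}$ is unbounded above in the lexicographic order, which gives condition~(2), and each $(n,\infty)$ is fixed by $\tau_2$ because $2 \cdot \infty = \infty$, which gives condition~(1).

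For condition~(3), a home-sort element fixed by $\tau_1$ must be of the form $a = (m, \infty)$, and such an $a$ is also fixed by $\tau_2$, so both automorphisms act on $X_a/E_a$. Here I would appeal to Example~\ref{example-play-ugly}: identifying the interval $(a, \iota(a))$ with $\Rr$, one has $X_a = \{(s,t) : s < t\}$ and $(s,t) E_a (s',t') \iff s - t = s' - t'$, so $X_a/E_a \cong \Rr_{<0}$ via $(s,t) \mapsto s - t$. Under this identification $\tau_1$ acts on $X_a/E_a$ as the identity (the translation cancels from the difference) and $\tau_2$ acts as $v \mapsto 2v$, which has no fixed point in $\Rr_{<0}$. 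Hence $\tau_1$ fixes every element of $X_a/E_a$ and $\tau_2$ fixes none, which is condition~(3).

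The substantive point is not any of these computations, which are all short, but the choice of the automorphisms --- in particular that $\tau_1$ must be a \emph{translation} in the $\Rp^1$-coordinate rather than a general affine map. A genuine scaling fails to cancel from the difference $s - t$, and so would not fix $X_a/E_a$ for the relevant parameters $a = (m,\infty)$; at the same time, being a translation on $\Rr$ is exactly what forces $\infty$ to be its unique fixed point, which pins $\Fix(\tau_1)$ down to precisely the ``distinguished'' parameters and makes that set unbounded above. The fact that the maps $\sigma_{c,d}$ are automorphisms at all rests on Remark~\ref{proto-affine}, and the description of $X_a/E_a$ for $a = (m,\infty)$ is supplied by Example~\ref{example-play-ugly}; granting those, the lemma is a direct verification.
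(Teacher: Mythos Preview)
Your proof is correct and follows essentially the same route as the paper: work in $N$ via the isomorphism, take $\tau_1(n,x)=(n,x+1)$ and $\tau_2(n,x)=(n,2x)$, invoke Remark~\ref{proto-affine} for the automorphism check, and use the identification of Example~\ref{example-play-ugly} to verify condition~(3). Your additional framing via the general family $\sigma_{c,d}$ and the explicit bijection $X_a/E_a \cong \Rr_{<0}$ is a bit more detailed than the paper, but the argument is the same.
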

\begin{proof}
  By the isomorphism $M \cong N$, we only need to consider the case of
  $N$.  In this case, let
  \begin{align*}
    \tau_1((n,x)) &= (n,x+1)  \\
    \tau_2((n,x)) &= (n,2x)
  \end{align*}
  These maps are indeed automorphisms; $\tilde{P}$ is preserved
  because of Remark~\ref{proto-affine}.  The fixed points of $\tau_1$
  are exactly the points $(n,\infty)$, which are cofinal and fixed by
  $\tau_2$.  For part 3, suppose $a = (n-1,\infty)$.  Under the
  identification of Example~\ref{example-play-ugly},
  \begin{align*}
    X_a &= \{(s,t) \in \Rr^2 : s < t\} \\
    (s,t)E_a(s',t') &\iff s - t = s' - t' \\
    \tau_1(s) &= s + 1 \\
    \tau_2(s) &= 2s
  \end{align*}
  As in \S\ref{sec:toy}, $\tau_1$ fixes $X_a/E_a$ pointwise.  In
  contrast, $\tau_2$ moves every point, because
  \begin{equation*}
    s < t \implies s - t \ne 2s - 2t
  \end{equation*}
\end{proof}

Now let $M^*$ be an $\aleph_1$-saturated ultrapower of $M$; there are
canonical extensions of $\tau_1$ and $\tau_2$ to $M^*$ having the same
first-order properties.  In particular, the properties listed in
Lemma~\ref{first-taus} continue to hold.

The following lemma allows us to glue automorphisms across Dedekind
cuts in $M^*$:
\begin{lemma}
  Let $(\Xi^-,\Xi^+)$ be a Dedekind cut on $M^*$, meaning specifically
  that $M^*$ is the disjoint union of $\Xi^-$ and $\Xi^+$, and $\Xi^-
  < \Xi^+$.  Let $\rho^+$ and $\rho^-$ be two automorphisms of $M$.
  Suppose that $\rho^+,\rho^-,$ and $\iota$ each preserve the Dedekind
  cut (for example, $\iota(\Xi^-) = \Xi^-$).  Then the map:
  \begin{equation*}
    \rho := (\rho^- \restriction \Xi^-) \cup (\rho^+ \restriction \Xi^+)
  \end{equation*}
  is an automorphism of $M^*$.
\end{lemma}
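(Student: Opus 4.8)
The plan is to check directly that $\rho$ is a bijection of $M^*$ which preserves each component of the structure --- the order $\le$, the unary function $\iota$, and the relation $\tilde P$. (Throughout, $\rho^-$ and $\rho^+$ are regarded as automorphisms of $M^*$, via their canonical extensions if they were originally given as automorphisms of $M$.) Almost all of this is a mechanical case split according to which side of the cut each relevant point lies on; the one step needing an idea is preservation of $\tilde P$, and it reduces to the observation that the defining formula of $\tilde P$ cannot ``reach across'' the cut.

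First I would note that $\rho$ is a bijection. Since $\rho^\pm$ preserve the cut, $\rho^-\restriction\Xi^-$ is a bijection of $\Xi^-$ onto itself and $\rho^+\restriction\Xi^+$ is a bijection of $\Xi^+$ onto itself; as $M^* = \Xi^-\sqcup\Xi^+$, the map $\rho$ is a bijection of $M^*$, with inverse $((\rho^-)^{-1}\restriction\Xi^-)\cup((\rho^+)^{-1}\restriction\Xi^+)$ --- which again has the shape appearing in the lemma, since the inverse of a cut-preserving bijection is cut-preserving. Next, $\rho$ preserves $\le$: if $x$ and $y$ lie on the same side of the cut this is immediate from $\rho^-$ or $\rho^+$ being order-preserving, while if $x\in\Xi^-$ and $y\in\Xi^+$ (so $x<y$) then $\rho(x)=\rho^-(x)\in\Xi^-$ and $\rho(y)=\rho^+(y)\in\Xi^+$, so $\rho(x)<\rho(y)$; thus $\rho$ is a strictly order-preserving bijection of a linear order, hence an order isomorphism. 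And $\rho$ commutes with $\iota$: if $x\in\Xi^-$ then $\iota(x)\in\Xi^-$ because $\iota$ preserves the cut, so $\rho(\iota(x))=\rho^-(\iota(x))=\iota(\rho^-(x))=\iota(\rho(x))$, and the case $x\in\Xi^+$ is symmetric.

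The main step --- and the only non-routine one --- is preservation of $\tilde P$, which rests on the claim: \emph{if $\tilde P(x_0,\dots,x_4)$ holds in $M^*$, then $x_0,x_1,\dots,x_4$ all lie in $\Xi^-$, or all lie in $\Xi^+$}. To see the claim, recall that $\tilde P(x_0,\dots,x_4)$ entails $x_0 < x_i < \iota(x_0)$ for $i=1,\dots,4$; if $x_0\in\Xi^-$ then $\iota(x_0)\in\Xi^-$ (as $\iota$ preserves the cut), so each $x_i < \iota(x_0)$ lies in $\Xi^-$ because $\Xi^-$ is downward closed (immediate from $\Xi^-<\Xi^+$), while if $x_0\in\Xi^+$ then each $x_i>x_0$ lies in $\Xi^+$ because $\Xi^+$ is upward closed. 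Granting the claim: if $\tilde P(x_0,\dots,x_4)$ holds, all the $x_j$ lie on one side, say $\Xi^-$, where $\rho$ agrees with the automorphism $\rho^-$, so $\tilde P(\rho(x_0),\dots,\rho(x_4))$ holds; the $\Xi^+$ case is identical. For the converse implication I would either rerun the same argument with $\rho^{-1}$ in place of $\rho$ (permissible since $\rho^{-1}$ has the same shape and satisfies the same hypotheses), or argue directly: if $\tilde P(\rho(x_0),\dots,\rho(x_4))$ holds, the claim places all the $\rho(x_j)$ on one side, which forces all the $x_j$ onto that same side of the cut, and then $\tilde P(x_0,\dots,x_4)$ transfers back along $\rho^-$ (or $\rho^+$). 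Hence $\rho$ respects $\tilde P$ in both directions, and combined with the previous paragraph this shows $\rho\in\Aut(M^*)$. I expect this ``locality of $\tilde P$ relative to the cut'' to be the only genuine ingredient; everything else is bookkeeping over the three positions of a tuple relative to the cut.
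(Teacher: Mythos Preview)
Your proof is correct and follows essentially the same approach as the paper's: the key observation in both is that whenever $\tilde P(x_0,\dots,x_4)$ holds, the constraint $x_0 < x_i < \iota(x_0)$ together with $\iota$ preserving the cut forces all five points onto one side, so preservation of $\tilde P$ reduces to each $\rho^\pm$ being an automorphism. The paper's proof is a terse three-sentence version of exactly this; your write-up is more careful (explicitly handling bijectivity, the order, $\iota$, and the converse direction for $\tilde P$), but there is no substantive difference in method.
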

\begin{proof}
  By inspection, $\tilde{P}(x_0,\ldots,x_4)$ cannot hold unless the
  $x_i$ are within distance $\pi$ of each other, in which case they
  must lie entirely on one side of the Dedekind cut.  Consequently,
  the preservation of $\tilde{P}$ by $\rho$ can be checked on each
  side of the Dedekind cut in isolation.  The preservation of $\le$ and
  $\iota$ by $\rho$ are similar or easier.
\end{proof}

Let $\Xi^\pm$ be the Dedekind cut just beyond the end of $M$, so $\Xi^+$
is the set of upper bounds of $M$ in $M^*$.  This Dedekind cut is
fixed by $\iota$, $\tau_1$ and $\tau_2$, because each of these maps sends $M$ to $M$ setwise.  By $\aleph_1$-saturation, $\Xi^+$ is
non-empty.  For $i = 1,2$, let $\sigma_i$ be the automorphism obtained
by gluing the identity map on $\Xi^-$ with $\tau_i$ on $\Xi^+$.  So
$\sigma_i$ fixes $\Xi^-$ pointwise, and agrees with $\tau_i$ on
$\Xi^+$.  Thus,
\begin{enumerate}
\item The $\sigma_i$ fix $M \subseteq \Xi^-$ pointwise.
\item Every element of the home sort fixed by $\sigma_1$ is fixed by
  $\sigma_2$.
\item There is an element $a \in \Xi^+$ fixed by $\tau_1$ and $\sigma_1$, as the fixed points of $\tau_1$ are cofinal.
\item For this element $a$, the maps $\sigma_i$ and $\tau_i$ agree on $X_a$.
  Consequently $\sigma_1$ fixes every element of $X_a/E_a$, and
  $\sigma_2$ fixes no element of $X_a/E_a$.
\end{enumerate}
By Lemma~\ref{ei-fail-test}, it follows that the $aM$-definable
quotient $X_a/E_a$ is not $aM$-definably eliminated.

Now let $X$ be
the 0-definable set of all triples of real elements
\begin{equation*}
  \{(x,y,z) : x < y < z < \iota(x)\}
\end{equation*}
and let $E$ be the 0-definable relation
\begin{align*}
  (x,y,z)E(x',y',z') & \iff x = x' \wedge \tilde{P}(x,y,z,y',z') \\
	   & \iff x = x'  \text{ and }  \\ & \cot(z - x) - \cot(y - x) = \cot(z' - x') - \cot(y' - x') 
\end{align*}
Then $E$ is an equivalence relation on $X$.
For any $a$, there is an $a$-definable injection $X_a \hookrightarrow X$ given by $(x,y) \mapsto
(a,x,y)$, and this induces an $a$-definable injection $X_a/E_a
\hookrightarrow X/E$.
\begin{proposition}
  In the structure $M$, the quotient $X/E$ is not $M$-definably
  eliminated.
\end{proposition}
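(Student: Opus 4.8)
The plan is to reduce the statement to facts already established above. We have produced, in the $\aleph_1$-saturated elementary extension $M^*$, automorphisms $\sigma_1,\sigma_2$ fixing $M$ pointwise together with an element $a \in \Xi^+$, and via Lemma~\ref{ei-fail-test} we have concluded that the $aM$-definable quotient $X_a/E_a$ cannot be eliminated over $aM$ in $M^*$. We have also exhibited an $a$-definable injection $X_a/E_a \hookrightarrow X/E$. So it is enough to check that $M$-definable elimination of $X/E$ in $M$ would force $aM$-definable elimination of $X_a/E_a$ in $M^*$, which is absurd.

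So I would argue by contradiction: suppose there is an $M$-definable map $f : X \to \Rr^k$ with $\vec{x} E \vec{x}' \iff f(\vec{x}) = f(\vec{x}')$ for all $\vec{x},\vec{x}' \in X$. The assertion ``$f$ is a total function on $X$ all of whose fibers are $E$-classes'' is a first-order formula with parameters from $M$ (here it matters that $X$ and $E$ are $0$-definable). Since $M \prec M^*$, the very same formula with the very same parameters defines, in $M^*$, an $M$-definable map $f : X \to (M^*)^k$ enjoying the same property; thus $X/E$ is $M$-definably eliminated in $M^*$ as well.

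Composing with the $a$-definable inclusion $X_a \hookrightarrow X$, $(x,y) \mapsto (a,x,y)$, I obtain an $aM$-definable map $g(x,y) := f(a,x,y)$ on $X_a$. Since $(a,x,y) E (a,x',y')$ holds exactly when $(x,y) E_a (x',y')$, we get $(x,y) E_a (x',y') \iff g(x,y) = g(x',y')$, so $X_a/E_a$ is $aM$-definably eliminated in $M^*$ --- contradicting Lemma~\ref{ei-fail-test}. In carrying this out I would also double-check that $a$ is fixed by $\sigma_2$ as well as $\sigma_1$, so that both automorphisms lie in $\Aut(M^*/aM)$: the fixed points of $\tau_1$ in $N$ are the points $(n,\infty)$, and these are fixed by $\tau_2$ too, so $a$ (a fixed point of $\tau_1$) is fixed by $\tau_2$, hence by $\sigma_1$ and $\sigma_2$.

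There is no real obstacle here; the substantive work lies entirely in the earlier construction. The only step calling for a little care is the transfer of elimination from $M$ to $M^*$: this works precisely because $X$ and $E$ are $0$-definable and $f$ is definable over $M$, so that ``$X/E$ is eliminated'' is a bona fide first-order statement over $M$ and therefore persists in the elementary extension $M^*$.
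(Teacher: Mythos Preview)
Your proof is correct and follows essentially the same route as the paper's own argument: assume an $M$-definable elimination of $X/E$, transfer it to $M^*$ by elementarity, and compose with the $a$-definable injection $X_a/E_a \hookrightarrow X/E$ to obtain an $aM$-definable elimination of $X_a/E_a$, contradicting what was established via Lemma~\ref{ei-fail-test}. Your extra remark that $\sigma_2$ fixes $a$ is already implicit in the paper (it is item~2 of the enumerated list just before the proposition, combined with item~3), but it does no harm to make it explicit.
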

\begin{proof}
  Otherwise, there would be an $M$-definable injection from $X/E$ into
  $M^k$.  In the elementary extension $M^*$ considered above, this would yield an $M$-definable injection from $X/E$ into $(M^*)^k$.  Above, we found an element $a \in M^*$ such that the $aM$-definable quotient $X_a/E_a$ is not $aM$-definably eliminated.  However, the composition
	\begin{equation*}
	X_a/E_a \hookrightarrow X/E \hookrightarrow (M^*)^k
	\end{equation*}
	is an $aM$-definable injection that eliminates the quotient $X_a/E_a$, a contradiction.
\end{proof}

\section{Good quotient topologies} \label{sec:positive}
We next turn our attention to Theorem~\ref{main-theorem}, which shows
that quotient topologies on definable quotients are sometimes
well-behaved.  We begin by discussing the topological tools that will
be used in the proof.

\subsection{Definable topologies and definable compactness}
Work inside a model-theoretic structure $M$.  Recall that a topology
on an interpretable set $X$ is \emph{definable} if some definable
family of subsets of $X$ constitutes a basis for the topology.
Typical examples include:
\begin{enumerate}
\item The order topology on any ordered structure
\item The standard topology on $M^n$ for any o-minimal structure $M$.
\item The valuation topology on any model of ACVF or $p$CF
  ($p$-adically closed fields).
\item The discrete topology on any structure
\end{enumerate}

\begin{remark}
  Let $X$ and $Y$ be definable topological spaces.
  \begin{enumerate}
  \item The subspace topology on any definable subset of $X$ is a
    definable topology.
  \item The sum and products topologies on $X \coprod Y$ and $X \times
    Y$ are definable.
  \item If $D$ is a definable subset of $X$, then $\overline{D}$ is
    definable.
  \item As $D$ ranges over a definable family of subsets of $X$,
    $\overline{D}$ ranges over a definable family.
  \end{enumerate}
\end{remark}
In definable topological spaces, there are notions of ``definable
connectedness'' and ``definable compactness'' behaving similarly to
normal connectedness and compactness.  Here we will only deal with
definable compactness.\footnote{Definition \ref{d3.2} does not appear in the
  literature, except for some slides and unpublished notes of
  Fornasiero \cite{fornasiero}.}\footnote{There is an alternative notion of ``definable compactness'' in the o-minimal setting, due to Peterzil and Steinhorn \cite{pz-definable-space}.  The Peterzil-Steinhorn definition uses completable curves, and is primarily geared for the setting of ``definable spaces.'' In our terminology, Peterzil-Steinhorn definable spaces are definable topological spaces covered by finitely many open sets, each of which is homeomorphic to a definable subset of $M^n$ with the induced subspace topology.  Since exotic interpretable sets never admit such coverings, we do not use the Peterzil-Steinhorn theory.  It is unclear whether our notion of definable compactness (Definition~\ref{d3.2}) agrees with Peterzil and Steinhorn's definition, when restricted to definable spaces.}

Say that a partial order $(\le, P)$ is downwards-directed if every
finite non-empty subset of $P$ has a lower bound, and upwards-directed
if every finite non-empty subset of $P$ has an upper bound.  Recall
that a topological space is compact if every downwards-directed family
of non-empty closed sets has non-empty intersection.
\begin{definition}\label{d3.2}
  A definable topological space $X$ is \emph{definably compact} if
  $\bigcap \mathcal{F}$ is non-empty, for every definable family
  $\mathcal{F}$ of non-empty closed subsets of $X$ that is
  downwards-directed with respect to inclusion.

  More generally, a definable subset $D \subseteq X$ is said to be
  \emph{definably compact} if the induced subspace topology on $D$ is
  definably compact.
\end{definition}
\begin{example}
  ~
  \begin{enumerate}
  \item The order topology on $(\Rr,<)$ is not definably compact due
    to the family of half-infinite intervals $[a,+\infty)$, which has
      empty intersection in spite of being a downwards directed family
      of closed non-empty sets.
  \item In contrast, $[0,1]$ is definably compact in $(\Rr,<)$,
    because it is compact.
  \item The closed interval $[0,1]$ is definably compact in
    $(\Qq,\le)$, because this is elementarily equivalent to the
    previous example.
  \item The discrete topology on any pseudofinite or NSOP set is
    definably compact, because downwards-directed families of subsets
    must have minima.  For example, the discrete topology on a
    pseudofinite field or an algebraically closed field is definably
    compact.
  \item In $\Qq_p$, the ring of integers $\Zz_p$ is definably compact
    in the valuation topology, because it is compact.  More generally,
    the ring of integers in a $p$-adically closed field is definably
    compact in the valuation topology.
  \item If $K$ is a pseudofinite field, then the ring $K[[t]]$ is
    definably compact with respect to the valuation topology (i.e.,
    the $(t)$-adic topology), because it is elementarily equivalent to
    an ultraproduct of the previous examples.
  \item One can show that $\Cc[[t]]$ is definably compact in the
    valuation topology, using the fact that the residue field is a
    pure algebraically closed field.
  \end{enumerate}
\end{example}

We now verify that many of the familiar properties
of compactness hold for definable compactness.
(Fornasiero has independently made these 
observations in \cite{fornasiero}.)
\begin{lemma} \label{kpct-img}
  Let $f : X \to Y$ be a definable continuous map between two
  definable topological spaces.  Then $f(K)$ is definably compact for
  any definable compact set $K \subseteq X$.
\end{lemma}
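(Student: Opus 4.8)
The plan is to mimic the classical proof that continuous images of compact sets are compact, but carried out with definable families of closed sets in place of arbitrary closed sets. First I would reduce to the case that $f$ is surjective onto $Y$ and that $K = X$; that is, replace $Y$ by $f(K)$ with the subspace topology and $X$ by $K$ with the subspace topology, noting (from the earlier remark on definable topological spaces) that subspace topologies on definable subsets are again definable, that the restriction of a continuous definable map is continuous and definable, and that definable compactness was defined precisely via the subspace topology. So it suffices to show: if $X$ is definably compact and $f : X \to Y$ is a definable continuous surjection, then $Y$ is definably compact.

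Next, let $\mathcal{F} = \{F_{\vec a}\}$ be a definable, downwards-directed (under inclusion) family of non-empty closed subsets of $Y$. I would pull it back: set $G_{\vec a} := f^{-1}(F_{\vec a}) \subseteq X$. Since $f$ is continuous, each $G_{\vec a}$ is closed in $X$; since $f$ is surjective and each $F_{\vec a}$ is non-empty, each $G_{\vec a}$ is non-empty; and since $\{G_{\vec a}\}$ is defined uniformly from $\mathcal{F}$ by a definable formula, it is again a definable family. Moreover $F_{\vec a} \subseteq F_{\vec b}$ implies $G_{\vec a} \subseteq G_{\vec b}$, so the pulled-back family is still downwards-directed (if $F_{\vec a}, F_{\vec b}$ have a common lower bound $F_{\vec c}$ in $\mathcal{F}$, then $G_{\vec c}$ is a common lower bound of $G_{\vec a}, G_{\vec b}$). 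Hence $\{G_{\vec a}\}$ is a definable downwards-directed family of non-empty closed subsets of the definably compact space $X$, so by Definition~\ref{d3.2} there is a point $x \in \bigcap_{\vec a} G_{\vec a}$. Then $f(x) \in F_{\vec a}$ for every $\vec a$, so $f(x) \in \bigcap \mathcal{F}$, which is therefore non-empty. This shows $Y$ is definably compact, and hence $f(K)$ is definably compact in the original formulation.

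The only point requiring care — and the closest thing to an obstacle — is checking that the pulled-back family $\{G_{\vec a}\}$ is still a \emph{definable} family and still \emph{downwards-directed}, rather than just a collection of sets. Definability is immediate because $G_{\vec a} = \{x \in X : f(x) \in F_{\vec a}\}$ is cut out by a single formula with parameter $\vec a$, using that both $f$ and $\mathcal{F}$ are definable; and downwards-directedness transfers along the order-preserving operation $F \mapsto f^{-1}(F)$ exactly as indicated above. Everything else is a direct transcription of the classical argument, so no further subtleties arise.
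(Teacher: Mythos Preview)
Your proposal is correct and follows essentially the same approach as the paper: reduce to the surjective case, pull back the directed family along $f$, observe that $F \mapsto f^{-1}(F)$ is order-preserving so directedness transfers, and apply definable compactness of $X$ to obtain a point whose image lies in every member of $\mathcal{F}$. The paper's proof is slightly terser but the structure and all key steps are identical.
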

\begin{proof}
  Replacing $X$ and $Y$ with $K$ and $f(K)$, we may assume $K = X$ and
  $f$ is surjective.  Let $\mathcal{F}$ be a downwards-directed
  definable family of non-empty closed subsets of $Y$.  As $f$ is
  surjective, $f^{-1}(F)$ is a non-empty closed subset of $X$ for each
  $F \in \mathcal{F}$.  Moreover, the map
  \begin{equation*}
    F \mapsto f^{-1}(F)
  \end{equation*}
  is order-preserving, so the family
  \begin{equation*}
    \{ f^{-1}(F) : F \in \mathcal{F}\}
  \end{equation*}
  is downwards-directed.  This family is a definable family, so by
  definable compactness on $X$, there is some $x_0 \in X$ such that
  \begin{equation*}
    x_0 \in f^{-1}(F) \qquad \forall F \in \mathcal{F}
  \end{equation*}
  or equivalently,
  \begin{equation*}
    f(x_0) \in F \qquad \forall F \in \mathcal{F}
  \end{equation*}
  Thus $\bigcap \mathcal{F}$ is non-empty, proving definable
  compactness of $Y$.
\end{proof}

\begin{lemma} \label{obv-facts}
  ~
  \begin{enumerate}
  \item \label{obv-facts-closed} If $K$ is a definably compact
    definable topological space, and $F \subseteq K$ is a closed
    subset, then $F$ is definably compact itself.
  \item If $K_1$ and $K_2$ are definably compact, so is $K_1 \cup
    K_2$.
  \end{enumerate}
\end{lemma}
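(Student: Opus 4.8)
The plan is to prove both parts by directly unwinding Definition~\ref{d3.2}, reducing each statement to the definable compactness that is already available.

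For part~(1): given $F$ closed in the definably compact space $K$, let $\mathcal{F}$ be a downwards-directed definable family of non-empty closed subsets of $F$ in the subspace topology. I would note that each $C \in \mathcal{F}$ has the form $C = C' \cap F$ with $C'$ closed in $K$, so since $F$ is itself closed in $K$, every $C$ is in fact closed in $K$; the family remains definable (its defining relation lives in $K \times M^k$ because $F$ is definable) and remains downwards-directed. Applying definable compactness of $K$ gives $\bigcap \mathcal{F} \ne \emptyset$, and since every member of $\mathcal{F}$ is contained in $F$, this intersection lies in $F$, witnessing definable compactness of $F$.

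For part~(2): regard $K_1,K_2$ as definable subsets of an ambient definable topological space, with $K_1 \cup K_2$ carrying the subspace topology. Let $\mathcal{F}$ be a downwards-directed definable family of non-empty closed subsets of $K_1 \cup K_2$; the goal is $\bigcap\mathcal{F}\ne\emptyset$. Split into two cases according to whether every $F\in\mathcal{F}$ meets $K_1$. If it does, then $\{F\cap K_1 : F\in\mathcal{F}\}$ is a downwards-directed definable family of non-empty closed subsets of $K_1$ (each $F\cap K_1$ is closed in $K_1$ since $F$ is closed in $K_1\cup K_2$), so by definable compactness of $K_1$ its intersection is non-empty and contained in $\bigcap\mathcal{F}$. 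Otherwise there is some $F_0\in\mathcal{F}$ with $F_0\cap K_1 = \emptyset$, i.e.\ $F_0\subseteq K_2$; since $F_0$ is closed in $K_1\cup K_2$ and contained in $K_2$, it is closed in $K_2$, hence definably compact by part~(1). Then $\{F\cap F_0 : F\in\mathcal{F}\}$ is a downwards-directed definable family of closed subsets of $F_0$, each non-empty by directedness of $\mathcal{F}$, so its intersection is non-empty by definable compactness of $F_0$, and this intersection is again contained in $\bigcap\mathcal{F}$.

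The one point I would flag as requiring care is precisely this: for a member $F$ of the family, $F\cap K_1$ can be empty, so one cannot blindly restrict the family to $K_1$; the case split, together with the observation that a member $F_0$ disjoint from $K_1$ is already a definably compact closed subset of $K_2$, resolves this. The remaining verifications — definability of the restricted families, preservation of downwards-directedness under $F\mapsto F\cap K_1$ and $F\mapsto F\cap F_0$, and the subspace-topology bookkeeping — are routine.
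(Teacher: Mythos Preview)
Your proof is correct. Part~(1) is verbatim the paper's argument. For part~(2) the paper instead argues by contrapositive and treats the two pieces symmetrically: assuming $\bigcap\mathcal{F}=\emptyset$, it forms both restricted families $\mathcal{F}_i=\{F\cap K_i:F\in\mathcal{F}\}$, notes each has empty intersection, so by definable compactness of $K_i$ each must contain $\emptyset$; this yields $F_1,F_2\in\mathcal{F}$ with $F_i\cap K_i=\emptyset$, and any common lower bound $F_3\subseteq F_1\cap F_2$ is then empty. Your version is direct and asymmetric, using a case split and invoking part~(1) to handle the case where some $F_0$ misses $K_1$. The two arguments are equally elementary and really the same idea reorganized; yours has the minor advantage of making the dependence on part~(1) explicit, while the paper's avoids the case analysis.
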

\begin{proof}
  \begin{enumerate}
  \item Any downwards-directed definable family of closed non-empty
    subsets of $F$ is also a downwards-directed definable family of
    closed non-empty subsets of $K$, so definable compactness directly
    transfers.
  \item Let $\mathcal{F}$ be a downwards-directed definable family of
    closed subsets of $K_1 \cup K_2$.  Suppose $\bigcap \mathcal{F} =
    \emptyset$.  We will show $\emptyset \in \mathcal{F}$.

    If $F$ is a closed definable subset of $K_1 \cup K_2$, then $F
    \cap K_1$ and $F \cap K_2$ are closed subsets of $K_1$ and $K_2$.
    The maps
    \begin{align*}
      F & \mapsto F \cap K_1 \\
      F & \mapsto F \cap K_2
    \end{align*}
    are order-preserving, so the families
    \begin{align*}
      \mathcal{F}_1 & := \{F \cap K_1 : F \in \mathcal{F}\} \\
      \mathcal{F}_2 & := \{F \cap K_2 : F \in \mathcal{F}\}
    \end{align*}
    are also downwards-directed definable families of closed sets.  Note that
    \begin{equation*}
      \bigcap \mathcal{F}_i \subseteq \bigcap \mathcal{F} = \emptyset
    \end{equation*}
    for $i = 1, 2$.  Consequently $\emptyset \in \mathcal{F}_i$ for $i
    = 1,2$, meaning that there are $F_1, F_2 \in \mathcal{F}$ such
    that
    \begin{equation*}
      F_i \cap K_i = \emptyset
    \end{equation*}
    for $i = 1, 2$.  By downward-directedness, there is some $F_3 \in
    \mathcal{F}$ such that $F_3 \subseteq F_1 \cap F_2$.  Then
    \begin{equation*}
      F_3 \cap (K_1 \cup K_2) = (F_3 \cap K_1) \cup (F_3 \cap K_2)
      \subseteq (F_1 \cap K_1) \cup (F_2 \cap K_2) = \emptyset \cup
      \emptyset = \emptyset.
    \end{equation*}
    So $\emptyset \in \mathcal{F}$.
  \end{enumerate}
\end{proof}

Say that a definable map $f : X \to Y$ of definable topological spaces
is \emph{definable closed} if $f(D)$ is closed for every closed
definable subset $D \subseteq X$.  This is a weaker condition than
being a closed map: for example, in the structure $(\Qq,\le)$, the
projection $\Qq \times [0,1] \to \Qq$ is not closed\footnote{Take a sequence $a_1, a_2, \ldots$ of rational numbers in $[0,1]$ converging to an irrational number.  If $S = \{(1/n,a_n) : n \in \Nn\}$, then $S$ is closed (as a subset of $\Qq \times \Qq$), but its projection onto the first coordinate is not closed.}, but is
definably closed (by Lemmas~\ref{pseudo-closed-map} and
\ref{o-minimal-kpct}).
\begin{lemma}\label{pseudo-closed-map}
  Let $X$ and $K$ be definable topological spaces, with $K$ definably
  compact.  Consider the product topology on $X \times K$ and let $\pi
  : X \times K \twoheadrightarrow X$ be the projection.  Then $\pi$ is
  definably closed.
\end{lemma}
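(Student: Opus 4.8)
Let me think about this. We want to show that the projection $\pi : X \times K \to X$ is definably closed, where $K$ is definably compact. This is the classical "tube lemma" / "projection along a compact fiber is closed" argument, adapted to the definable setting.

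Let $D \subseteq X \times K$ be a closed definable subset. We want to show $\pi(D)$ is closed in $X$, i.e., that $X \setminus \pi(D)$ is open. So fix a point $x_0 \in X \setminus \pi(D)$; equivalently, the fiber $\{x_0\} \times K$ is disjoint from $D$. We need to find an open neighborhood $U$ of $x_0$ in $X$ such that $U \times K$ is disjoint from $D$, which would give $U \subseteq X \setminus \pi(D)$.

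The natural approach is via definable compactness of $K$. For each point $k \in K$, since $(x_0, k) \notin D$ and $D$ is closed, there is a basic open neighborhood of $(x_0,k)$ disjoint from $D$; in the product topology this contains a box $U_k \times V_k$ with $x_0 \in U_k$ open in $X$ and $k \in V_k$ open in $K$. Now consider, for each open neighborhood $U$ of $x_0$ drawn from the definable basis of $X$, the closed set $C_U := \{k \in K : (U \times \{k\}) \cap D \neq \emptyset\}$ — equivalently $C_U = \pi_K(D \cap (U \times K))$ where $\pi_K$ is projection to $K$. Hmm, but I need $C_U$ to be closed, which isn't automatic. Let me instead work with the closed sets $F_U := \{k \in K : (\overline{U} \times \{k\}) \cap D \neq \emptyset\}$, or better, argue directly with a downward-directed family.

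Here is the cleaner version. Let $\mathcal{B}$ be the definable basis for the topology on $X$, and let $\mathcal{B}_{x_0} \subseteq \mathcal{B}$ be the subfamily of basic opens containing $x_0$; this is a downward-directed definable family (intersecting two basic opens containing $x_0$, we can find a third inside). For $U \in \mathcal{B}_{x_0}$ set
\[
F_U := \overline{\pi_K\bigl(D \cap (U \times K)\bigr)} \subseteq K,
\]
where $\pi_K : X \times K \to K$ is the projection. Each $F_U$ is a closed definable subset of $K$, and as $U$ ranges over the downward-directed family $\mathcal{B}_{x_0}$, the $F_U$ form a downward-directed definable family (since $U \subseteq U'$ implies $F_U \subseteq F_{U'}$). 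I claim $\bigcap_{U} F_U = \emptyset$: if $k$ lay in every $F_U$, then using the box neighborhood $U_k \times V_k$ of $(x_0,k)$ disjoint from $D$ and choosing $U \in \mathcal{B}_{x_0}$ with $U \subseteq U_k$, one checks $\pi_K(D \cap (U\times K))$ is disjoint from $V_k$, hence (since $V_k$ is open and $k \in V_k$) $k \notin F_U$ — contradiction. [This last step is where the box structure of the product topology is used; I should be slightly careful that the closure doesn't reintroduce $k$, but $V_k$ open disjoint from the set implies $V_k$ disjoint from its closure.] By definable compactness of $K$, emptiness of the intersection forces $\emptyset \in \{F_U\}$, so $F_{U^*} = \emptyset$ for some $U^* \in \mathcal{B}_{x_0}$. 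Then $D \cap (U^* \times K) = \emptyset$, so $U^* \subseteq X \setminus \pi(D)$ and $x_0$ is an interior point of $X \setminus \pi(D)$.

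The main obstacle is bookkeeping around definability and closedness: one must ensure the family $\{F_U\}$ is genuinely a \emph{definable} family (it is, being indexed by the definable basis via a first-order formula) and that each $F_U$ is closed (forced by taking closures, which preserves definability by the earlier Remark) while keeping the family downward-directed and the intersection empty. A secondary subtlety is verifying, from the definition of the product topology, that a point $(x_0,k)\notin D$ has a basic box neighborhood missing $D$ — this is exactly the definition of the product of two definable topologies, so it is routine. Everything else is a direct translation of the classical tube-lemma argument into the language of downward-directed definable families, with Definition~\ref{d3.2} doing the work that sequential/net compactness does classically.
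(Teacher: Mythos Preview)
Your proof is correct and is essentially the same argument as the paper's, phrased dually: where the paper builds the \emph{open} sets $N^\dag = \{k : (N \times U) \cap D = \emptyset \text{ for some open } U \ni k\}$ and shows they form an upward-directed definable cover of $K$, you build their complements $F_U = \overline{\pi_K(D \cap (U \times K))} = K \setminus U^\dag$ and show they form a downward-directed definable family of closed sets with empty intersection. The two versions are literally complementary and invoke the same instance of definable compactness.
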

\begin{proof}
  Suppose $F$ is a closed subset of $X \times K$ and $x_0 \in X
  \setminus \pi(F)$.  We will show $x_0 \notin \overline{\pi(F)}$, so
  that $\pi(F) = \overline{\pi(F)}$.  For each open neighborhood $N$
  of $x_0$, let
  \begin{equation*}
    N^\dag := \{k \in K : \text{ there is an open neighborhood $U$ of
      $k$ such that } (N \times U) \cap F = \emptyset\}
  \end{equation*}
  Note that $N^\dag$ is open and map $N \mapsto N^\dag$ is
  order-reversing.  Let $\mathcal{N}$ be a definable neighborhood
  basis of $x_0$, and let
  \begin{equation*}
    \mathcal{N}^\dag := \{N^\dag : N \in \mathcal{N}\}
  \end{equation*}
  Because $\mathcal{N}$ is downwards-directed, $\mathcal{N}^\dag$ is
  upwards-directed.

  Furthermore, $\bigcup \mathcal{N}^\dag = K$.  Indeed, if $k$ is any
  element of $K$, then $(x_0,k) \notin F$, by choice of $x_0$, so some
  open neighborhood $N \times U$ of $(x_0,k)$ avoids $F$, as $F$ is
  closed.

  So $\mathcal{N}^\dag$ is an upwards-directed definable family of
  open subsets of $K$, whose union is all of $K$.  By definable
  compactness, $K \in \mathcal{N}^\dag$.  So there is some $N \in
  \mathcal{N}$ with $N^\dag = K$, implying that $(N \times K) \cap F =
  \emptyset$, and thus $N \cap \pi(F) = \emptyset$.  Thus we have
  produced an open neighborhood $N$ of $x_0$ disjoint from $\pi(F)$,
  showing that $x_0 \notin \overline{ \pi(F)}$.  As $x_0$ was an
  arbitrary point not in $\pi(F)$, it follows that $\pi(F)$ is closed.
\end{proof}

\begin{proposition} \label{kpct-prods}
  Let $X$ and $Y$ be definably compact definable topological spaces.
  Then $X \times Y$ is definably compact.
\end{proposition}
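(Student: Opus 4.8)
The plan is to deduce Proposition~\ref{kpct-prods} from Lemma~\ref{pseudo-closed-map}, imitating the classical proof that a product of compact spaces is compact via the closed-projection characterization. Write $\pi : X \times Y \twoheadrightarrow X$ for the first projection; since $Y$ is definably compact, Lemma~\ref{pseudo-closed-map} tells us $\pi$ is definably closed.

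Let $\mathcal{F}$ be a downwards-directed definable family of non-empty closed subsets of $X \times Y$; we must show $\bigcap \mathcal{F} \neq \emptyset$. First I would push $\mathcal{F}$ forward along $\pi$: the collection $\{\pi(F) : F \in \mathcal{F}\}$ is again a definable family, its members are non-empty and (by definable closedness of $\pi$) closed, and it is downwards-directed because $F \mapsto \pi(F)$ is order-preserving. Definable compactness of $X$ then yields a point $x_0 \in \bigcap_{F \in \mathcal{F}} \pi(F)$.

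Next I would restrict to the fiber over $x_0$. For each $F \in \mathcal{F}$ set $F_{x_0} := \{y \in Y : (x_0,y) \in F\}$, the preimage of $F$ under the continuous map $y \mapsto (x_0,y)$; it is closed in $Y$, and it is non-empty precisely because $x_0 \in \pi(F)$. The family $\{F_{x_0} : F \in \mathcal{F}\}$ is definable and, again by monotonicity of preimages, downwards-directed, so definable compactness of $Y$ provides a point $y_0 \in \bigcap_{F \in \mathcal{F}} F_{x_0}$. Then $(x_0,y_0) \in F$ for every $F \in \mathcal{F}$, so $\bigcap \mathcal{F} \neq \emptyset$, as desired.

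The substance of the argument is really contained in Lemma~\ref{pseudo-closed-map}; the only points to verify here are the routine facts that taking images and fibers sends a definable family to a definable family and preserves downward-directedness, so I do not anticipate any genuine obstacle. One should be mildly careful that ``definable family'' is set up so that these operations stay within the framework, but this is immediate from the definitions, and the proof should be short.
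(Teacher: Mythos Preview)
Your proposal is correct and is essentially identical to the paper's proof: both push $\mathcal{F}$ forward along $\pi$ using Lemma~\ref{pseudo-closed-map}, apply definable compactness of $X$ to find $x_0$, and then apply definable compactness of $Y$ to the fiber over $x_0$. The only cosmetic difference is that the paper phrases the second step in terms of the subsets $F \cap (\{x_0\} \times Y)$ of the definably compact set $\{x_0\} \times Y$, whereas you equivalently work with the slices $F_{x_0} \subseteq Y$.
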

\begin{proof}
  We may assume $X$ and $Y$ are non-empty.  Let $\pi : X \times Y \to
  X$ denote the projection.  Suppose $\mathcal{F}$ is a
  downwards-directed definable family of non-empty closed subsets of
  $X \times Y$.  For each $F \in \mathcal{F}$, the projection $\pi(F)$
  is closed, by Lemma~\ref{pseudo-closed-map}, and obviously
  non-empty.  Furthermore, the map $F \mapsto \pi(F)$ is
  order-preserving.  Consequently, the family
  \begin{equation*}
    \{\pi(F) : F \in \mathcal{F}\}
  \end{equation*}
  is a downwards-directed definable family of closed non-empty subsets
  of $X$.  By definable compactness of $X$, we may find some $x_0$
  such that
  \begin{equation*}
    x_0 \in \pi(F) \qquad \forall F \in \mathcal{F}
  \end{equation*}
  Equivalently, $F \cap (\{x_0\} \times Y)$ is non-empty for every $F
  \in \mathcal{F}$.  Note that $\{x_0\} \times Y$ is definably compact
  (as a subset of $X \times Y$) because it is definably homeomorphic
  to $Y$.  The family
  \begin{equation*}
    \{F \cap (\{x_0\} \times Y) : F \in \mathcal{F}\}
  \end{equation*}
  is a definable family of non-empty closed subsets of $\{x_0\} \times
  Y$, and it is downwards-directed because the map
  \begin{equation*}
    F \mapsto F \cap (\{x_0\} \times Y)
  \end{equation*}
  is order-preserving.  By definable compactness of
  $\{x_0\} \times Y$, we can find some $(x_0,y_0)$ which is in every
  $F$, showing that $\bigcap \mathcal{F}$ is non-empty.
\end{proof}

\begin{lemma} \label{kpct-f}
  Let $X$ be a definable topological space that is Hausdorff, and let
  $K$ be a definably compact subset.  Then $K$ is closed.
\end{lemma}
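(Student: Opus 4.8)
The plan is to mimic the classical argument that compact subsets of Hausdorff spaces are closed, replacing the usual net/filter manipulation with an application of definable compactness to a carefully chosen definable, downwards-directed family of closed subsets of $K$.

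First I would fix a point $x_0 \in \overline{K}$ and aim to show $x_0 \in K$. Since the topology on $X$ is definable, the basic opens containing $x_0$ form a definable neighborhood basis $\mathcal{N}$ of $x_0$ (definable over $x_0$ together with the parameters of the topology), and since the basic opens refine their pairwise intersections, this family is downwards-directed with respect to inclusion. For each $N \in \mathcal{N}$ put $F_N := \overline{N} \cap K$. By the remarks on definable topologies, $\overline{N}$ is closed and ranges over a definable family as $N$ ranges over $\mathcal{N}$, so $\{F_N : N \in \mathcal{N}\}$ is a definable family of closed subsets of $K$ in the subspace topology; it is downwards-directed because $N \mapsto \overline{N} \cap K$ is order-preserving, and each $F_N$ is non-empty because $x_0 \in \overline{K}$ forces $N \cap K \ne \emptyset$, whence $F_N \supseteq N \cap K \ne \emptyset$.

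Next I would invoke definable compactness of $K$ applied to this family, obtaining a point $y \in K$ lying in $\overline{N}$ for every $N \in \mathcal{N}$. It then suffices to show $y = x_0$: this gives $x_0 = y \in K$, and since $x_0 \in \overline{K}$ was arbitrary we conclude $K = \overline{K}$ is closed. So suppose $y \ne x_0$. By Hausdorffness choose disjoint open sets $U \ni x_0$ and $V \ni y$; as $\mathcal{N}$ is a neighborhood basis there is some $N \in \mathcal{N}$ with $N \subseteq U$, hence $N \cap V = \emptyset$, so $V$ is a neighborhood of $y$ disjoint from $N$, contradicting $y \in \overline{N}$.

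I do not expect a genuine obstacle here; the only points needing care are bookkeeping ones already supplied by the preliminary remarks — that the chosen neighborhood basis is definable and downwards-directed, and that $\{\overline{N} \cap K : N \in \mathcal{N}\}$ is again a definable family — together with the minor device of working with $\overline{N}$ rather than $N$ so that the sets in play are genuinely closed in $K$.
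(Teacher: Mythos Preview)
Your proposal is correct and follows essentially the same argument as the paper: both take a definable neighborhood basis $\mathcal{N}$ of a point $x_0$ in $\overline{K}$ (the paper takes $x_0 \in \partial K$, which is cosmetically the contrapositive framing), form the downwards-directed definable family $\{\overline{N}\cap K : N\in\mathcal{N}\}$, extract a common point by definable compactness, and then use Hausdorffness to identify that point with $x_0$ (or derive a contradiction). The details you flag---definability of the family of closures, downward-directedness of the basic neighborhoods, and the use of $\overline{N}$ rather than $N$---match the paper's treatment exactly.
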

\begin{proof}
  Otherwise, fix $x_0 \in \partial K$.  Let $\mathcal{N}$ be a
  definable neighborhood basis of $x_0$.  The family $\mathcal{N}$ is
  downwards directed, and the map
  \begin{equation*}
    N \mapsto \overline{N} \cap K
  \end{equation*}
  is order-preserving, so the family
  \begin{equation*}
    \{\overline{N} \cap K : N \in \mathcal{N}\}
  \end{equation*}
  is a downwards-directed definable family of closed subsets of $K$.
  Furthermore, none of the sets $\overline{N} \cap K$ is empty, because
  $x \in \partial K$, so each $N$ intersects $K$.  By definable
  compactness, there is some $x_1$ such that
  \begin{equation*}
    x_1\in \overline{N} \cap K \qquad \forall N \in \mathcal{N}
  \end{equation*}
  Then $x_1\in K$, so $x_1 \ne x_0$.  By the Hausdorff property, some
  open neighborhood $N$ of $x_0$ satisfies $x_1 \notin \overline{N}$.
  Shrinking $N$ a little, we may assume $N \in \mathcal{N}$, and
  obtain a contradiction.
\end{proof}

\begin{lemma} \label{o-minimal-kpct}
  If $M$ is an o-minimal structure, then any closed interval
  $[c,d] \subset M^1$ is definably compact in the order topology.
\end{lemma}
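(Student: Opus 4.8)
The plan is to convert the definable-compactness condition into a supremum computation inside the home sort, using two standard facts about an o-minimal $M$: a non-empty definable subset of $M^1$ that is bounded above has a supremum in $M$ (definable completeness), and a non-empty closed bounded definable subset of $M^1$ has a least element. Concretely, I would start with a downwards-directed definable family $\mathcal{F}$ of non-empty closed subsets of $[c,d]$, written as $\mathcal{F} = \{W_{\vec b} : \vec b \in B\}$ for a definable index set $B \subseteq M^k$ and a definable $W \subseteq [c,d] \times M^k$. Each $W_{\vec b}$ is non-empty, closed and bounded, so it has a least element $\mu(\vec b) := \min W_{\vec b}$, and $\mu$ is a definable function $B \to [c,d]$. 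Its image $S := \mu(B)$ is a non-empty definable subset of $[c,d]$, hence bounded above, so $a := \sup S$ exists and lies in $[c,d]$. The goal is then to show $a \in \bigcap \mathcal{F}$.

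The one place directedness is used is the following observation: for every $\vec b \in B$ and every $e < a$ there is $\vec b' \in B$ with $W_{\vec b'} \subseteq W_{\vec b}$ and $\mu(\vec b') > e$. Indeed, $e$ is not an upper bound of $S$, so $\mu(\vec b_0) > e$ for some $\vec b_0 \in B$; downwards-directedness gives $\vec b'$ with $W_{\vec b'} \subseteq W_{\vec b} \cap W_{\vec b_0}$, and then $\mu(\vec b') = \min W_{\vec b'} \ge \min W_{\vec b_0} = \mu(\vec b_0) > e$. Also $\mu(\vec b') \le a$ since $\mu(\vec b') \in S$.

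To finish, fix $\vec b \in B$ and suppose for contradiction that $a \notin W_{\vec b}$. Since $W_{\vec b}$ is closed in $[c,d]$, there is $e < a$ such that no point $x$ of $[c,d]$ with $e < x \le a$ lies in $W_{\vec b}$, i.e. a half-open left neighbourhood of $a$ avoids $W_{\vec b}$ (the endpoint cases $a = c$ and $a = d$ need no special treatment). Applying the observation to this $e$ yields $\vec b'$ with $W_{\vec b'} \subseteq W_{\vec b}$ and $e < \mu(\vec b') \le a$, whence $\mu(\vec b') \in W_{\vec b'} \subseteq W_{\vec b}$ contradicts the choice of $e$. So $a \in W_{\vec b}$ for every $\vec b \in B$, i.e. $a \in \bigcap \mathcal{F}$, and $[c,d]$ is definably compact.

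The main obstacle is making the supremum step legitimate. An o-minimal $M$ need not be Dedekind complete — it can be elementarily equivalent to $(\Qq,\le)$ — so one cannot take suprema of arbitrary bounded subsets of $M^1$; it is essential that $S$ be \emph{definable} before invoking definable completeness, and equally that each $\mu(\vec b)$ is actually attained in $W_{\vec b}$. Apart from that, the only fuss is routine: distinguishing ``closed in $[c,d]$'' from ``closed in $M$'' and dealing with the endpoints $c$ and $d$. The genuine content is the little claim in the middle, where closedness of $W_{\vec b'}$ (so that $\min W_{\vec b'}$ witnesses membership) combines with downwards-directedness (to merge two members of $\mathcal{F}$) to push $\sup S$ into every member of the family.
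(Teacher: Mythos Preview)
Your proof is correct and is essentially the mirror image of the paper's argument: the paper takes $s_0 = \inf\{\max F : F \in \mathcal{F}\}$ where you take $a = \sup\{\min W_{\vec b} : \vec b \in B\}$, and the contradiction step is the same up to reversing the order. Your version is arguably a touch cleaner (the contradiction comes directly from $\mu(\vec b') \in W_{\vec b}$ rather than via a further bound on $\max F_2$), and you are more explicit than the paper about why $S$ is definable, which is indeed the point that needs care.
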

\begin{proof}
  Let $\mathcal{F}$ be a downwards-directed definable family of
  non-empty closed subsets of $[c,d]$.  O-minimality ensures that
  $\max F$ exists for each $F \in \mathcal{F}$.  Let
  \begin{equation*}
    S = \{\max F : F \in \mathcal{F}\}
  \end{equation*}
  This is a definable subset of [c,d], so $s_0 = \inf S$ exists.  We claim that
  $s_0 \in F$ for all $F \in \mathcal{F}$.

  Otherwise, by closedness of the $F$'s, there must be some open
  interval $(a,b)$ around $s_0$, and some $F_0 \in \mathcal{F}$, such
  that $(a,b) \cap F_0 = \emptyset$.  Since $s$ is the infimum of $S$,
  it must be in the closure of $S$, so $S$ must intersect $(a,b)$.  In
  particular, there must be some $s_1 \in S \cap (a,b)$.  By
  definition of $S$, there is some $F_1 \in \mathcal{F}$ such that
  $s_1 = \max F_1$.  By downwards directedness, there is some $F_2 \in
  \mathcal{F}$ such that $F_2 \subseteq F_0 \cap F_1$.  Then
  \begin{equation*}
    F_2 \subseteq F_1 \subseteq (-\infty,s_1] \subseteq (-\infty,b)
  \end{equation*}
  because $s_1 = \max F_1$ and $s_1 < b$.  Additionally,
  \begin{equation*}
    F_2 \cap (a,b) \subseteq F_0 \cap (a,b) = \emptyset.
  \end{equation*}
  Combining these, we see that $F_2 \subseteq
  (-\infty,a]$. Consequently, $\max F_2 \le a < s_0$, contradicting
  the choice of $s_0$.
\end{proof}

The next proposition shows that our definition of definable
compactness agrees with the standard one in o-minimal structures.
\begin{proposition}
  Let $(M,<,\ldots)$ be an o-minimal structure.  In the standard
  topology on $M^n$ the definably compact sets are exactly the closed
  bounded sets.
\end{proposition}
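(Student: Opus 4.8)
The plan is to prove both inclusions. For the easy direction, suppose $K \subseteq M^n$ is definably compact in the standard topology. The standard topology on $M^n$ is Hausdorff, so Lemma~\ref{kpct-f} gives that $K$ is closed. To see that $K$ is bounded, consider the definable family of closed sets $\{K \setminus (-r,r)^n : r \in M, r > 0\}$, or rather the family $\{K \cap (M^n \setminus (-r,r)^n)\}$; each member is closed in $K$, and the family is downwards-directed in $r$ (larger $r$ gives a smaller set). If $K$ were unbounded, each of these would be non-empty, so by definable compactness their intersection would be non-empty — but the intersection is empty since every point of $M^n$ lies in some cube $(-r,r)^n$. Hence $K$ is bounded.

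For the converse, suppose $K \subseteq M^n$ is closed and bounded. Then $K$ is a closed subset of some cube $[c,d]^n$. By Lemma~\ref{o-minimal-kpct}, the interval $[c,d] \subseteq M^1$ is definably compact in the order topology. The standard topology on $M^n$ restricted to $[c,d]^n$ is exactly the product of $n$ copies of the order topology on $[c,d]$; so by Proposition~\ref{kpct-prods} and induction on $n$, the cube $[c,d]^n$ is definably compact. Since $K$ is a closed subset of the definably compact space $[c,d]^n$, part~\ref{obv-facts-closed} of Lemma~\ref{obv-facts} shows $K$ is definably compact.

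The only point requiring a little care is the identification of the subspace topology on $[c,d]^n \subseteq M^n$ with the $n$-fold product of order topologies, but this is standard and purely topological: a basis for the standard topology on $M^n$ is given by open boxes $\prod_i (a_i,b_i)$, which is exactly a basis for the product topology. There is no serious obstacle; the proposition is essentially a bookkeeping assembly of the preceding lemmas, with Lemma~\ref{kpct-f} handling closedness, the unbounded-cube family handling boundedness, and Lemma~\ref{o-minimal-kpct} together with Proposition~\ref{kpct-prods} and Lemma~\ref{obv-facts}\ref{obv-facts-closed} handling the converse.
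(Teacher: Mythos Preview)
Your argument follows the same route as the paper's: closed-and-bounded implies definably compact via Lemma~\ref{o-minimal-kpct}, Proposition~\ref{kpct-prods}, and Lemma~\ref{obv-facts}(\ref{obv-facts-closed}); not closed implies not definably compact via Lemma~\ref{kpct-f}; and not bounded implies not definably compact via an explicit downwards-directed family of non-empty closed sets with empty intersection.

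One small technical point: your family $\{K \setminus (-r,r)^n : r > 0\}$ presupposes a distinguished element $0 \in M$ and a negation $r \mapsto -r$, neither of which a general o-minimal structure need have (the paper only assumes $M$ expands a dense linear order without endpoints). The paper avoids this by using the two-parameter family
\[
X \cap \bigl((-\infty,a] \cup [b,+\infty)\bigr)^n, \qquad a \le b,
\]
which makes sense in any linear order. Your argument is repaired by the same device; nothing else changes.
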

\begin{proof}
  Let $X \subseteq M^n$ be definable.  

  First suppose that $X$ is closed and bounded.  Then $X \subseteq
  [a,b]^n$ for some $a, b \in M$.  By Lemma~\ref{o-minimal-kpct},
  $[a,b]$ is definably compact, and by Proposition~\ref{kpct-prods},
  $[a,b]^n$ is definably compact.  Finally, the closed subset $X$ of
  $[a,b]^n$ is compact by
  Lemma~\ref{obv-facts}(\ref{obv-facts-closed}).

  Next suppose $X$ is not bounded.  Then for every $a \le b$, the
  intersection
  \begin{equation*}
    X \cap \left((-\infty,a] \cup [b,+\infty)\right)^n
  \end{equation*}
  is non-empty.  The family of all such intersections is a definable
  downwards-directed family of closed non-empty subsets of $X$.
  However, its intersection is empty, so $X$ is not definably compact.

  Finally, suppose $X$ is not closed.  Then $X$ fails to be definably
  compact by Lemma~\ref{kpct-f}, because the standard topology on
  $M^n$ is Hausdorff.
\end{proof}

\begin{lemma} \label{no-space-filling-injections}
  Let $f : X \to Y$ be a definable continuous map from a definable
  topological space $X$ to a definable topological space $Y$.  If $X$
  is definably compact, $Y$ is Hausdorff, and $f$ is injective, then
  $f$ is a homeomorphism onto its image.
\end{lemma}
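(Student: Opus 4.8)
The plan is to adapt the classical argument that a continuous bijection from a compact space to a Hausdorff space is a homeomorphism, with ``compact'' replaced by ``definably compact'' everywhere. Write $Z = f(X)$, viewed as a definable topological space with the subspace topology inherited from $Y$. Then $Z$ is Hausdorff (a subspace of a Hausdorff space), and $f$ is a continuous bijection $X \to Z$. It suffices to prove that $f^{-1} : Z \to X$ is continuous, equivalently that $f$ carries open subsets of $X$ to open subsets of $Z$.

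The first step is to show that $f$ sends \emph{definable} closed subsets of $X$ to closed subsets of $Y$. Let $C \subseteq X$ be definable and closed. Since $X$ is definably compact, Lemma~\ref{obv-facts}(\ref{obv-facts-closed}) shows that $C$ is definably compact. Then $f(C) \subseteq Y$ is definably compact by Lemma~\ref{kpct-img}, and since $Y$ is Hausdorff, Lemma~\ref{kpct-f} shows that $f(C)$ is closed in $Y$. In particular $f(C)$ is closed in $Z$.

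The second step uses definability of the topology on $X$ to conclude. Fix a definable family $\{U_{\vec a}\}_{\vec a \in M^k}$ of subsets of $X$ that forms a basis of opens. For each $\vec a$, the complement $X \setminus U_{\vec a}$ is a definable closed subset of $X$, so by the first step $f(X \setminus U_{\vec a})$ is closed in $Z$; and because $f$ is a bijection onto $Z$, we have $f(U_{\vec a}) = Z \setminus f(X \setminus U_{\vec a})$, which is therefore open in $Z$. Every open subset of $X$ is a union of sets of the form $U_{\vec a}$, and $f$ commutes with unions, so $f$ carries every open subset of $X$ to an open subset of $Z$. Hence $f^{-1}$ is continuous, and $f : X \to Z$ is a homeomorphism onto its image.

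I do not expect a real obstacle beyond bookkeeping; the only place where a hypothesis absent from the purely topological statement is genuinely needed is the reduction in the last paragraph. It is essential that the topology on $X$ be definable, so that checking openness of $f$ reduces to the definable closed sets $X \setminus U_{\vec a}$ --- these are the only closed sets to which Lemmas~\ref{obv-facts}, \ref{kpct-img}, and \ref{kpct-f} directly apply.
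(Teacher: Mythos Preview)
Your proof is correct and follows essentially the same route as the paper's: both reduce to the bijective case, show that definable closed sets map to closed sets via the chain Lemma~\ref{obv-facts}(\ref{obv-facts-closed}) $\to$ Lemma~\ref{kpct-img} $\to$ Lemma~\ref{kpct-f}, and then invoke the existence of a definable basis to conclude that $f$ is open. The paper phrases the last step as ``$D$ is closed iff $f(D)$ is closed, and definable bases suffice,'' but this is the same argument you give.
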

\begin{proof}
  Shrinking $Y$, we may assume $f$ is a bijection.  For any definable
  subset $D \subseteq X$, $D$ is closed in $X$ if and only if $f(D)$ is
  closed in $Y$.  Indeed, if $f(D)$ is closed, then $D = f^{-1}(f(D))$
  is closed by continuity, and conversely, if $D$ is closed, then $D$
  is compact by Lemma~\ref{obv-facts}(\ref{obv-facts-closed}), $f(D)$
  is compact by Lemma~\ref{kpct-img}, and $f(D)$ is closed by
  Lemma~\ref{kpct-f}.

  Equivalently, a definable subset $D \subseteq X$ is open in $X$ if and
  only if $f(D)$ is open in $Y$.  Because $X$ and $Y$ have definable bases of opens, this is enough to ensure that $f$ is a homeomorphism.
\end{proof}

\subsection{Quotient topologies and open maps}
\label{sec:o-eq}

Recall (\cite{lou-o-minimality} \S6.4) that a surjective continuous map
$f : X \to Y$ is an \emph{identifying map} if
\begin{equation*}
  f^{-1}(U) \text{ is open } \implies U \text{ is open}
\end{equation*}
for all $U \subseteq X$.  For a fixed topological space $X$, the
identifying maps out of $X$ are exactly the maps of the form $X
\twoheadrightarrow X/E$ where $X/E$ has the quotient topology.

Note that surjective open maps are identifying.  Say that an
equivalence relation $E$ on a topological space is an \emph{open
  equivalence relation} if the quotient map $X \twoheadrightarrow X/E$
is an open map.

For $D$ a subset of $X$, let $D^E$ denote the union of $E$-equivalence
classes intersecting $D$.  We will call this the \emph{$E$-closure} of
$D$.  An equivalence relation $E$ is an open equivalence relation
exactly if the $E$-closure of any open set is open.

We are interested in open equivalence relations because they ensure
definability of the quotient topology, in a model-theoretic setting:

\begin{lemma} \label{quot-def}
  Let $X$ be an interpretable set with a definable topology.  Let $E$
  be a definable open equivalence relation on $X$.  Then the quotient
  topology on $X/E$ is a definable topology.
\end{lemma}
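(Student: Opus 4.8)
The plan is to exhibit an explicit definable family of subsets of $X/E$ that forms a basis for the quotient topology. Let $q : X \twoheadrightarrow X/E$ be the quotient map, and let $U \subseteq X \times M^k$ be a definable relation whose fibers $U_{\vec a}$ form a basis for the definable topology on $X$. The natural candidate for a basis of the quotient topology is the family of images $\{q(U_{\vec a}) : \vec a \in M^k\}$. This is visibly a definable family of subsets of the interpretable set $X/E$, since the relation $\{([x]_E, \vec a) : x \in U_{\vec a}\}$ is definable. So the whole content is to check that these sets are open in $X/E$ and that they form a basis.

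First I would check openness. A set $V \subseteq X/E$ is open in the quotient topology exactly when $q^{-1}(V)$ is open in $X$. Now $q^{-1}(q(U_{\vec a}))$ is precisely the $E$-closure $(U_{\vec a})^E$, i.e.\ the union of all $E$-classes meeting $U_{\vec a}$. Since $E$ is an open equivalence relation, the $E$-closure of an open set is open (this is exactly the reformulation of ``open equivalence relation'' recorded just before the lemma), so $q^{-1}(q(U_{\vec a})) = (U_{\vec a})^E$ is open in $X$, hence $q(U_{\vec a})$ is open in $X/E$. This is the step where the hypothesis that $E$ is \emph{open} (rather than merely that $q$ is identifying) gets used, and it is the key point — without it the images $q(U_{\vec a})$ need not be open at all.

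Next I would check that these sets form a basis. Let $W \subseteq X/E$ be any open set and let $e \in W$; I must find $\vec a$ with $e \in q(U_{\vec a}) \subseteq W$. Pick $x \in X$ with $q(x) = e$. Since $q$ is continuous, $q^{-1}(W)$ is an open neighborhood of $x$ in $X$, so by the basis property for $X$ there is $\vec a$ with $x \in U_{\vec a} \subseteq q^{-1}(W)$. Then $e = q(x) \in q(U_{\vec a})$, and applying $q$ to the inclusion $U_{\vec a} \subseteq q^{-1}(W)$ gives $q(U_{\vec a}) \subseteq W$ (using surjectivity of $q$ so that $q(q^{-1}(W)) = W$). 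Hence $\{q(U_{\vec a}) : \vec a \in M^k\}$ is a definable basis for the quotient topology, and the quotient topology on $X/E$ is definable. I expect no genuine obstacle here; the only thing to be slightly careful about is bookkeeping the interpretability of the relation $\{(q(x),\vec a) : x \in U_{\vec a}\}$ in $(X/E) \times M^k$, which is immediate since $E$, $U$, and the quotient map are all definable in $M^{\mathrm{eq}}$.
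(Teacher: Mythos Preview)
Your proof is correct and follows essentially the same approach as the paper: both show that the images $\{q(B) : B \in \mathcal{B}\}$ of a definable basis $\mathcal{B}$ for $X$ form a definable basis for the quotient topology on $X/E$. The paper's version is more terse, simply noting that since $q$ is a surjective open map the open sets of $X/E$ are exactly the $q(U)$ for $U$ open in $X$; you have spelled out the openness verification (via $q^{-1}(q(U_{\vec a})) = (U_{\vec a})^E$) and the basis check in more detail, but the underlying argument is the same.
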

\begin{proof}
  Let $f : X \twoheadrightarrow X/E$ be the quotient map, which is a
  surjective open map.  Note that the open subsets of $X/E$ are
  exactly the sets of the form $f(U)$ for $U$ an open in $X$.  Let
  $\mathcal{B}$ be a definable basis of opens for $X$.  Then
  \begin{equation*}
    \{f(B) : B \in \mathcal{B}\}
  \end{equation*}
  is a definable basis for the topology on $X/E$.
\end{proof}

In the proof of Theorem~\ref{main-theorem}, we will prove that certain
properties hold generically, and then shrink to open sets on which
these properties hold.  Open equivalence relations help ensure that
the topology does not change too much when we pass to open subsets:
\begin{lemma} \label{niceness-of-o-eq}
  Let $X$ be a topological space and $E$ be an open equivalence
  relation on $X$.  Let $X'$ be an open subset of $X$.
  \begin{enumerate}
  \item The restriction of $E$ to $X'$ is an open equivalence relation.
  \item There are two topologies on $X'/E$, the subspace topology (as
    a subset of $X/E$) and the quotient topology (as a quotient of
    $X'$).  These two topologies agree.
  \item The map $X'/E \hookrightarrow X/E$ is an open embedding.
  \end{enumerate}
\end{lemma}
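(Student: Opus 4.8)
The plan is to prove the three parts in order, since part~(2) draws on part~(1) and part~(3) is essentially a repackaging of part~(2). Throughout, write $q : X \twoheadrightarrow X/E$ and $q' : X' \twoheadrightarrow X'/E$ for the two quotient maps, and $\iota : X'/E \to X/E$ for the natural injection, so that $\iota \circ q' = q \restriction X'$. The one set-theoretic fact I will use repeatedly is that for $U \subseteq X'$, the $(E \restriction X')$-closure of $U$ computed inside $X'$ is exactly $U^E \cap X'$, where $U^E$ denotes the $E$-closure computed in $X$; this is immediate, since the $(E\restriction X')$-class of a point $x \in X'$ is $[x]_E \cap X'$.

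For part~(1): let $U \subseteq X'$ be open. Since $X'$ is open in $X$, $U$ is open in $X$, so $U^E$ is open in $X$ by the hypothesis that $E$ is an open equivalence relation; hence the $(E\restriction X')$-closure $U^E \cap X'$ is open in $X'$. By the characterization of open equivalence relations recalled just before the lemma (the $E$-closure of an open set is open), $E \restriction X'$ is an open equivalence relation on $X'$.

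For part~(2): continuity of $\iota$ with respect to the quotient topology is formal — for $W \subseteq X/E$ open, $(q')^{-1}(\iota^{-1}(W)) = (q \restriction X')^{-1}(W) = X' \cap q^{-1}(W)$ is open in $X'$, so $\iota^{-1}(W)$ is open in the quotient topology; thus the quotient topology refines the subspace topology. Conversely, let $V \subseteq X'/E$ be open in the quotient topology and set $U = (q')^{-1}(V)$, which is open in $X'$ and hence in $X$. A direct check of definitions gives $\iota(V) = q(U)$: both sides are precisely the set of $E$-classes $[x]_E$ with $x \in X'$ and $[x]_{E\restriction X'} \in V$. Since $q$ is open, $q(U) = \iota(V)$ is open in $X/E$, and a fortiori open in the subspace topology on $\iota(X'/E)$. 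So the subspace topology refines the quotient topology, and the two coincide.

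For part~(3): $\iota$ is injective, because two points of $X'$ lying in distinct $(E\restriction X')$-classes are not $E$-related and so land in distinct $E$-classes. It is continuous by part~(2), and the identity $\iota(V) = q(U)$ from part~(2) shows it carries quotient-open sets to genuinely open subsets of $X/E$, so $\iota$ is an open map and hence a homeomorphism onto its image. Finally $\iota(X'/E) = q(X')$ is open in $X/E$, since $X'$ is open and $q$ is open, so $\iota$ is an open embedding. Nearly all of this is bookkeeping; the only place the hypothesis on $E$ is doing real work, rather than formal nonsense, is the reverse inclusion in part~(2), where the openness of $q$ is exactly what upgrades ``$(q')^{-1}(V)$ open'' to ``$\iota(V)$ open.'' I expect the main care to go into stating the identity $\iota(V) = q(U)$ and the $(E\restriction X')$-closure computation precisely enough to avoid circularity, neither of which is actually difficult.
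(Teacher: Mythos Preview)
Your proof is correct and covers the same ground as the paper's. The only difference is organizational: the paper argues in the order (3)$\to$(2)$\to$(1), viewing $X'/E$ first with the subspace topology and using a commutative-square argument to show that $X' \twoheadrightarrow X'/E$ is open (hence identifying), whereas you prove (1) directly via the $E$-closure characterization and then derive (2) and (3).
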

\begin{proof}
  View $X'/E$ as topological space via the subspace topology.  The map
  $f : X \to X/E$ is an open map, so $f(X') = X'/E$ is an open subset
  of $X/E$, proving (3).

  The top and right maps in the following commutative diagram are open
  maps, so their composition is also an open map.
  \begin{equation*}
    \xymatrix{ X' \ar@{^{(}->}[r] \ar@{->>}[d] & X \ar@{->>}[d] \\
      X'/E \ar@{^{(}->}[r] & X/E}
  \end{equation*}
	Because the diagonal is an open map and the bottom map $X'/E \hookrightarrow X/E$ is a continuous injection, it follows that the left map $X' \twoheadrightarrow X'/E$ is an open map.  Open surjective maps are identifying maps, so
  $X'/E$ has the quotient topology from $X'$, proving (2).  Having
  shown that $X'/E$ has the quotient topology, (1) means precisely
  that $X' \twoheadrightarrow X'/E$ is an open map, which we showed.
\end{proof}

\subsection{Proof of Theorem \ref{main-theorem}}
In this section, we will work inside a fixed o-minimal structure $M$.
If $X \subseteq Y$ is an inclusion of interpretable sets, we will say
that $X$ is a \emph{full subset} of $Y$ if $\dim(Y \setminus X) < \dim
Y$.

We will prove the following refinement of Theorem~\ref{main-theorem}:
\begin{theorem} \label{better-theorem}
  Let $X \subseteq M^n$ be a definable set, and $E$ be a definable
  equivalence relation on $X$.  There is a definable full open subset
  $X'$ of $X$ such that if $E'$ is the restriction of $E$ to $X$, then
  $E'$ is an open equivalence relation on $X'$ (in the sense of
  \S\ref{sec:o-eq}), and the quotient topology on $X'/E'$ is Hausdorff and
  locally Euclidean.
\end{theorem}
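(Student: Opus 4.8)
The plan is to shrink $X$ to a full open subset $X'$ on which the partition into $E$-classes behaves like a definable foliation in ``general position'', and then to build charts for $X'/E$ by slicing the leaves with local transversals inside $X'$ itself. Two observations organize the argument. First, by Lemma~\ref{quot-def} it suffices to arrange that $E$ restricts to an open equivalence relation on $X'$ and that the quotient topology is Hausdorff and locally Euclidean --- definability is then automatic. Second, for a continuous surjective \emph{open} map $q : X' \twoheadrightarrow X'/E$, the quotient is Hausdorff exactly when the graph of $E$ is closed in $X' \times X'$; so once $E$ is known to be an open equivalence relation, Hausdorffness becomes a closedness statement about a definable set.

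\emph{Step 1 (constant leaf dimension).} The function $x \mapsto \dim([x]_E)$ is definable, and since $[x]_E$ depends only on the $E$-class of $x$ it is $E$-invariant. Partitioning $X$ by its value gives finitely many $E$-saturated definable pieces; deleting the union of their topological frontiers in $X$ (a lower-dimensional closed set, since frontiers drop dimension in o-minimal structures) makes the pieces relatively clopen, and we discard those of dimension $< \dim X$. A good quotient built on each surviving piece reassembles into one on $X$, so we may assume every $E$-class has a fixed dimension $m$; set $e := \dim X - m$, which equals $\dim(X/E)$ by additivity of o-minimal dimension for interpretable sets (\cite{interpdim}).

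\emph{Step 2 (putting the foliation in general position).} Applying cell decomposition to $X$ and to the definable family $\{[x]_E\}_{x \in X}$, together with repeated dimension counts, I would produce a definable $Z \subsetneq X$ with $\dim Z < \dim X$ such that, writing $X^\circ := X \setminus \overline{Z}$ (closure in $X$), an open full subset of $X$, the following hold on $X^\circ$: every point is a generic point of its own leaf; there is a locally constant definable choice of a coordinate projection $\rho \colon M^n \to M^m$ under which each leaf is, near each of its points, the graph of a continuous function over $\rho$, these graphs varying continuously with the leaf; and distinct leaves stay uniformly apart locally, so that the $E$-saturation of an open set is again open. The $E$-invariantly-defined bad loci (wrong leaf dimension, non-closed leaves, oversized local limit sets) can be kept $E$-saturated; the remaining loci (e.g. non-generic points of leaves) are merely lower-dimensional, but since all constructions below are local near a point of $X^\circ$, the fact that leaves may get truncated by the removal of $\overline Z$ causes no harm.

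\emph{Step 3 (transversal charts).} Fix $x_0 \in X^\circ$ with attached projection $\rho$. The fibre $T := \rho^{-1}(\rho(x_0)) \cap X^\circ$ is a definable set which near $x_0$ has dimension $e$ and meets $[x_0]_E$ only in $x_0$ --- a local transversal. Because nearby leaves are graphs over $\rho$ varying continuously, every leaf close enough to $[x_0]_E$ meets $T$ in exactly one point near $x_0$, depending continuously on the leaf; this yields an $E$-saturated open neighbourhood $U$ of $x_0$ and a definable map $\psi \colon U \to T$ sending $x$ to the unique point of $[x]_E \cap T$ near $x_0$. One checks that $\psi$ is continuous, $E$-invariant, open, surjects onto an open subset of $T$, and has fibres exactly the $E$-classes meeting $U$; hence $\psi$ is an identifying map and factors through a homeomorphism $U/E \cong \psi(U) \subseteq T$. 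Since $x_0$ is generic, $T$ is near $x_0$ definably homeomorphic to an open subset of $M^e$ by a further coordinate projection, so $U/E$ is too. Letting $x_0$ range over $X^\circ$, these neighbourhoods $U/E$ cover $X^\circ/E$; this shows simultaneously that $E$ restricts to an open equivalence relation on $X^\circ$ and that $X^\circ/E$ is locally Euclidean, in particular locally Hausdorff.

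\emph{Step 4 (global Hausdorffness and conclusion).} Let $R := \overline{E} \setminus E$ (closure in $X^\circ \times X^\circ$) and $B := \{x \in X^\circ : \dim R_x \geq m\}$, where $R_x$ denotes the $x$-fibre of $R$. Using that $E$ is an open equivalence relation on $X^\circ$ with continuously-varying, relatively closed leaves, one sees that each $R_x$ is $E$-saturated, hence empty or of dimension $\geq m$; thus $R \subseteq (B \times X^\circ) \cup (X^\circ \times B)$, and $B$ is $E$-saturated. From $\dim R \geq \dim B + m$ and $\dim R < \dim E = \dim X + m$ we get $\dim B < \dim X$. Put $X' := X^\circ \setminus \overline{B}$ (closure in $X^\circ$): this is a full open subset of $X$, still carrying an open equivalence relation with locally Euclidean quotient (the constructions of Step 3 localize into $X'$), and now $(X' \times X') \cap R = \emptyset$, so $E$ is closed in $X' \times X'$ and $X'/E$ is Hausdorff. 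Finally $X'/E$ is a definable topological space by Lemma~\ref{quot-def}, completing the proof.

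\emph{Main obstacle.} The weight of the argument lies in Step 2: isolating the correct lower-dimensional bad set off which the $E$-classes genuinely form a standard foliation, and --- the more delicate point --- managing the dimension and $E$-saturation bookkeeping so that everything discarded can be discarded without spoiling the quotient (the saturation of a lower-dimensional set need not be lower-dimensional, so one must verify that the loci one actually removes are either $E$-invariantly defined or only used locally). Steps 3 and 4 are then comparatively routine applications of o-minimal dimension theory and point-set topology.
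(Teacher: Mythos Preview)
Your approach is genuinely different from the paper's and worth comparing. The paper never tries to realise the $E$-classes as a foliation or to build transversal slices. Instead it proves the three properties by three successive shrinkings (Propositions~\ref{part1}--\ref{part3}), each argued pointwise at \emph{generic} elements of $X$ in an $\aleph_1$-saturated model, using thorn-independence to free up parameters (Lemma~\ref{tricks}). Openness comes from showing that for generic $a$ and any $b\in[a]_E$, every neighbourhood $B$ of $b$ has $a\in\ter(B^E)$: after shrinking $B$ so that $\ulcorner B\urcorner\forkindep ab$, failure would put $a$ in a low-dimensional frontier over $\ulcorner B\urcorner$. Hausdorffness is similar. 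Local Euclideanity is the most distinctive step: the paper takes an independent realisation $b\equiv_e a$ of $\tp(a/e)$ where $e=\pi(a)$, splits $b=b_1b_2b_3$ according to $\dcl$ over $\emptyset$ and over $e$, and from the resulting definable functions manufactures a continuous injection $h$ from a closed box into $X/E$; definable compactness (Lemma~\ref{no-space-filling-injections}) forces $h$ to be an embedding, and an independence count places $\pi(a)$ in the interior of the image. No cell decomposition of the family of leaves is ever invoked.

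Your Steps~1, 3 and 4 are essentially sound; in particular the Step~4 argument (fibres of $\overline{E}\setminus E$ are $E$-saturated once $E$ is open, hence the bad base $B$ is low-dimensional) is a clean alternative to the paper's Hausdorffness proof. The real gap is Step~2, and it is more than bookkeeping. The hardest claim is that $E\restriction X^\circ$ is an \emph{open} equivalence relation. Your local product picture only shows that the quotient map is open on each foliation chart in isolation; for global openness you need the holonomy statement that whenever $aEb$ and $V\ni b$ is open, every $a'$ near $a$ has $[a']_E\cap V\neq\emptyset$. This relates the chart near $a$ to the chart near $b$, and does not follow from each chart being nice separately --- leaves need not be connected, so one cannot simply chain charts along a leaf, and ``distinct leaves stay uniformly apart locally'' says nothing about this. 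The paper's Proposition~\ref{part1} proves exactly this holonomy statement at generics via the independence trick, without any foliation setup. So either you must import that argument into Step~2, or you must establish a uniform-family cell decomposition for $\{[x]_E\}_x$ strong enough to give continuous holonomy between arbitrary $E$-equivalent points of $X^\circ$; the latter is plausible but is real work, not just dimension bookkeeping, and essentially re-derives Proposition~\ref{part1}. Once openness is secured your transversal charts are a reasonable substitute for the paper's compactness-based charts; the paper's route has the advantage of never needing the foliation hypotheses at all.
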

The requirement that $X'$ is a full open subset of $X$ is exactly
equivalent to conditions \ref{cond1} and \ref{cond2} of
Theorem~\ref{main-theorem}.  Lemma~\ref{quot-def} ensures that the
quotient topology $X'/E'$ is definable, and
Lemma~\ref{niceness-of-o-eq} ensures that the final condition
\ref{cond4} of Theorem~\ref{main-theorem} holds.

For the proof of Theorem~\ref{better-theorem}, we may assume that $X$
and $E$ are 0-definable, by naming parameters otherwise.  We may also
assume that the language is countable (by passing to a reduct
otherwise).

In proving Theorem~\ref{better-theorem}, we may replace $M$ with an
$\aleph_1$-saturated elementary extension.  The
topological properties other than local Euclideanity are all
expressible by first-order sentences.  In $\aleph_1$-saturated models,
local Euclideanity implies \emph{uniform local Euclideanity}, local
Euclideanity witnessed by charts of bounded complexity.  And then
uniform local Euclideanity can be expressed as a disjunction of
first-order sentences, so it descends from the elementary extension to
the original structure.

Thus, in what follows, we will assume that the language is countable,
and that the ambient o-minimal structure is $\aleph_1$-saturated.  For
a 0-definable or 0-interpretable set $D$, we will say that an element
$a \in D$ is \emph{generic (in $D$)} if $\dim(a/\emptyset) = \dim D$.

The following lemma contains the main tricks we will use in the proof:
\begin{lemma} \label{tricks}
  Let $X \subseteq M^n$ be a 0-definable set.  Working inside the
  definable topological space $X$,
  \begin{enumerate}
  \item \label{trick-dim} $\dim \partial D < \dim D$ for any non-empty
    definable set $D$, where the frontier is taken \emph{inside} $X$.
  \item \label{trick-generic} Let $P$ be a subset of $X$ which is
    0-definable or 0-ind-definable.  Suppose that $P$ contains every
    generic element of $X$.  Then $P$ contains a full open
    0-definable subset $X'$ of $X$.
  \item \label{trick-indy} Let $S$ be any countable set, and let $a$
    be an element of $X$. The collection of definable open
    neighborhoods $B$ of $a$ such that
    \begin{equation*}
      \ulcorner B \urcorner \forkindep aS
    \end{equation*}
    form a neighborhood basis of $a$.
  \end{enumerate}
\end{lemma}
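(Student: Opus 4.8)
\textbf{Proof proposal for Lemma~\ref{tricks}.}

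The plan is to handle the three parts separately, since they use quite different inputs. For part~\ref{trick-dim}, I would reduce to the corresponding fact about the ambient o-minimal structure. The space $X$ carries the standard topology as a subspace of $M^n$, and for a definable $D \subseteq X$ the frontier $\partial D$ taken inside $X$ is $\overline{D}^X \setminus D$, where $\overline{D}^X = \overline{D}^{M^n} \cap X \subseteq \overline{D}^{M^n}$. Hence $\partial D$ (inside $X$) is contained in $\overline{D}^{M^n} \setminus D = \partial^{M^n} D$, and the standard o-minimal frontier inequality $\dim \partial^{M^n} D < \dim D$ (valid for non-empty definable $D$; see van den Dries) gives the claim immediately. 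So this part is essentially bookkeeping about which ambient space the closure is computed in.

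For part~\ref{trick-generic}, the key point is a compactness/saturation argument. Suppose $P \subseteq X$ is $0$-definable or $0$-ind-definable and contains every generic element of $X$. Write $Y = X \setminus P$; then $Y$ is a $0$-definable or $0$-\emph{co}-ind-definable set containing no generic element of $X$, i.e.\ $\dim(a/\emptyset) < \dim X$ for every $a \in Y$. I would first argue $\dim Y < \dim X$: if $Y$ is $0$-definable this is immediate since any generic point of $Y$ would be generic in $X$; if $Y = X \setminus P$ with $P$ ind-definable, then $Y$ is an intersection of $0$-definable sets, each of dimension $< \dim X$ would already suffice, but more carefully, by $\aleph_1$-saturation and countability of the language, the type over $\emptyset$ of a generic point of $X$ is not realized in $Y$, so some $0$-definable superset $Y \subseteq Z$ still avoids all generic points, hence $\dim Z < \dim X$. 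Now set $X' = X \setminus \overline{Z}^X$ (or $X \setminus \overline{Y}^X$), which is open in $X$, $0$-definable, contained in $P$, and full because $\dim \overline{Z}^X = \dim Z < \dim X$ by part~\ref{trick-dim} (or the basic dimension-of-closure fact). This is the step I expect to require the most care, precisely in the ind-definable case — one must use saturation to replace the ind-definable $P$ by an honest definable set capturing enough of it.

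For part~\ref{trick-indy}, I would use the fact that the standard topology on $M^n$ has a $0$-definable basis of open boxes $\prod_i (c_i, d_i)$, parametrized by the tuple of endpoints $\vec{c}, \vec{d}$. Fix $a \in X$ and a countable set $S$, and let $N$ be an arbitrary definable open neighborhood of $a$ in $X$; I must produce a smaller definable open $B \ni a$ with $\ulcorner B \urcorner \forkindep_a aS$, i.e.\ $\dim(\ulcorner B \urcorner / aS) = \dim(\ulcorner B \urcorner / a)$. Since $N$ contains an open box $B_0$ around $a$ with rational-over-$a$ — more precisely, $a$-definable — endpoints? No: the issue is that we cannot in general take the box to be $a$-definable. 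Instead, choose the endpoint tuple $(\vec{c},\vec{d})$ of a small box around $a$ inside $N$ to be generic over $aS$ subject to the open conditions $c_i < a_i < d_i$ and $\prod(c_i,d_i) \subseteq N$; this is possible because those conditions define a nonempty open (hence full-dimensional) subset of the space of endpoint tuples, so by extension of o-minimal rank we can realize the generic type of that set over $aS$. For such a tuple, $\dim((\vec c,\vec d)/aS)$ is maximal, which forces $(\vec c,\vec d) \forkindep_a S$, i.e.\ $\ulcorner B \urcorner \forkindep_a aS$ (using that $\ulcorner B \urcorner$ is interdefinable with $(\vec c, \vec d)$ over $\emptyset$ for a box). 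Since $N$ was arbitrary and $B \subseteq N$, these $B$ form a neighborhood basis. The one subtlety here is making sure the ``generic box'' can be taken arbitrarily small — but shrinking $N$ first to a tiny box guarantees every box inside it is small, so this is not a real obstacle.

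Overall, the main obstacle is part~\ref{trick-generic} in the ind-definable case, where the passage from an ind-definable $P$ to a definable full open subset genuinely uses $\aleph_1$-saturation together with countability of the language; parts~\ref{trick-dim} and~\ref{trick-indy} are comparatively routine consequences of standard o-minimal dimension theory and the existence of a $0$-definable basis of boxes.
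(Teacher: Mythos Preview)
Your approach is correct and matches the paper's (very terse) proof in all three parts: reduce the frontier inequality to the ambient $M^n$; use compactness to trap the type-definable complement $X \setminus P$ inside a 0-definable set $D$ of dimension $< \dim X$ and take $X' = X \setminus \overline{D}$; and take boxes $X \cap \prod (b_i,c_i)$ whose endpoint tuples are generic over $aS$. One small slip in part~\ref{trick-indy}: the lemma asks for $\ulcorner B \urcorner \forkindep aS$ (independence over $\emptyset$), not $\forkindep_a$ --- but your actual argument, choosing $(\vec c,\vec d)$ generic over $aS$ in an open subset of $M^{2n}$, yields $\dim((\vec c,\vec d)/aS)=2n$ and hence the required stronger independence.
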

\begin{proof}
  \begin{enumerate}
  \item The frontier of $D$ within $X$ is smaller than the
    frontier of $D$ within the ambient space $M^n$, and for $M^n$ this
    fact is \cite{lou-o-minimality} Theorem 4.1.8.
  \item Note that $X \setminus P$ is type-definable over $\emptyset$
    and contains only elements of rank less than $\dim X$ over
    $\emptyset$.  Thus
    \begin{equation*}
      X \setminus P \subseteq D
    \end{equation*}
    for some 0-definable $D$ with
    \begin{equation*}
      \dim \overline{D} = \dim D < \dim X,
    \end{equation*}
    and then we can take $X' = X \setminus \overline{D}$.
  \item We can take $B$ of
    the form
    \begin{equation*}
      X \cap \prod_{i = 1}^n ]b_i,c_i[
    \end{equation*}
    where the $b_i$ and $c_i$ are close to $a$ but independent from
    everything in sight.
  \end{enumerate}  
\end{proof}


We break the proof of Theorem~\ref{better-theorem} into three steps,
which are the next three propositions.

\begin{proposition} \label{part1}
  Let $X \subseteq M^n$ be 0-definable and $E$ be a 0-definable
  equivalence relation on $X$.  There is a 0-definable full open
  subset $X' \subseteq X$ on which the restriction $E \restriction X'$
  is an open equivalence relation.
\end{proposition}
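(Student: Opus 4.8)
We want a 0-definable full open $X' \subseteq X$ on which $E$ restricts to an open equivalence relation, i.e., the $E$-saturation of any open subset of $X'$ is open in $X'$. By Lemma~\ref{tricks}(\ref{trick-generic}), it suffices to find a 0-ind-definable set $P$ containing every generic point of $X$ and having the property that $E$ is ``open at $P$'' in an appropriate local sense; then shrink to a full open $X' \subseteq P$ and check openness of $E\restriction X'$ directly. So the real content is a genericity statement about where the $E$-saturation map is open.

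\textbf{Key steps.} First I would reformulate ``$E \restriction X'$ is an open equivalence relation'' pointwise: it is equivalent to requiring that for every $a \in X'$ and every definable open neighborhood $B \ni a$, the set $(B)^E \cap X'$ is a neighborhood of every point of $X'$ that is $E$-equivalent to a point of $B$. The crux is a local condition at a single point $a$: call $a$ \emph{good} if whenever $a \mathrel{E} a'$, then for every neighborhood $B$ of $a$ there is a neighborhood $B'$ of $a'$ with $B' \subseteq B^E$; equivalently, $a'$ lies in the interior of $B^E$ for all small $B \ni a$. Second, I would prove that every generic $a \in X$ is good, using Lemma~\ref{tricks}(\ref{trick-indy}): choose the witnessing neighborhood $B$ of $a$ with $\ulcorner B \urcorner \forkindep a a'$ (taking $S$ to contain $a'$ and any needed parameters), so that $a'$ remains generic in $B^E$, hence $a'$ is an interior point of $B^E$ by Lemma~\ref{tricks}(\ref{trick-dim}) applied to $\partial(B^E)$ — a generic point of a definable set cannot lie on the frontier of that set. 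One has to be slightly careful: genericity of $a$ over $\emptyset$ plus independence of $\ulcorner B\urcorner$ gives that $a$ is generic over $\ulcorner B\urcorner$, and then $a'$ — being interdefinable-ish or at least not dropping dimension — is generic in $B^E$ over $\ulcorner B\urcorner$; this is where the dp-rank/thorn-independence bookkeeping lives. Third, the set of good points is 0-ind-definable (being good is a countable conjunction over a definable family of neighborhoods and over the definable relation $E$), so by Lemma~\ref{tricks}(\ref{trick-generic}) it contains a full open 0-definable $X'$. Finally, I would verify that $E \restriction X'$ is genuinely an open equivalence relation on $X'$: given $U \subseteq X'$ open and $a' \in U^E \cap X'$, pick $a \in U$ with $a \mathrel{E} a'$, shrink $U$ to a basic neighborhood $B$ of $a$ inside $U$, and use goodness of... — wait, goodness is only guaranteed at points of $X'$, which is exactly where we are, so goodness of $a$ (not $a'$!) gives a neighborhood $B'$ of $a'$ with $B' \cap X' \subseteq B^E \cap X' \subseteq U^E \cap X'$, proving $U^E \cap X'$ open.

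\textbf{The main obstacle.} The delicate point is the symmetry issue in the definition of ``good'': openness of the saturation map at $U$ near $a'$ requires controlling neighborhoods of $a'$ in terms of neighborhoods of points $a \mathrel{E} a'$ with $a$ possibly not generic. The fix is to build into the definition of ``good point $a$'' a condition that is about $a$ itself but strong enough — namely that \emph{for all} $a'$ with $a' \mathrel{E} a$ and all small $B \ni a$, $a'$ is interior to $B^E$ — and then observe that when we finally restrict to $X'$, the point $a$ we use in the argument above does lie in $X'$ (we chose $a \in U \subseteq X'$), so its goodness is available. Getting the logical form of ``good'' right so that it is genuinely 0-ind-definable and genuinely implies openness after restriction is the part that needs care; the independence argument via Lemma~\ref{tricks}(\ref{trick-indy}) and the frontier-dimension bound of Lemma~\ref{tricks}(\ref{trick-dim}) is then essentially forced.
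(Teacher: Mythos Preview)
Your overall strategy matches the paper's: define a pointwise ``niceness'' condition, prove it holds at generic points via Lemma~\ref{tricks}(\ref{trick-indy}) and the frontier bound, cut down to a full open $X'$, and verify openness of $E\restriction X'$. The gap is in the orientation of your ``good'' condition. You require that for generic $a$ and \emph{every} $a' \sim a$, the point $a'$ lies in $\ter(B^E)$ for neighborhoods $B$ of $a$. But $a'$ is an arbitrary member of the class $\{a\}^E$ and can have arbitrarily low rank over $\emptyset$; independence of $\ulcorner B\urcorner$ from $aa'$ gives $\dim(a'/\ulcorner B\urcorner)=\dim(a'/\emptyset)$, not $\dim X$, so you cannot conclude $a'\notin\partial(X\setminus B^E)$. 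Your hand-wave ``interdefinable-ish or at least not dropping dimension'' is exactly where this fails. Concretely, in $(\Rr,<,+,\cdot)$ take $X=M^2$, let $h(x,y)=x$ for $y\ge 0$ and $h(x,y)=1/x$ for $y<0$, and set $E$ to be equality of $h$-values. For generic $a=(a_1,a_2)$ with $a_2>0$ and $a'=(a_1,0)\sim a$, any small box $B$ around $a$ with $B\subseteq\{y>0\}$ has $B^E=h^{-1}((a_1-\epsilon,a_1+\epsilon))$, and points $(a_1,-\eta)$ near $a'$ lie in $B^E$ only if $1/a_1\in(a_1-\epsilon,a_1+\epsilon)$, which fails for small $\epsilon$. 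So $a$ is not ``good'' in your sense, and your Claim is false.

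The fix is to reverse the roles: define $a$ to be \emph{nice} if for every $b\sim a$ and every neighborhood $B$ of $b$, the \emph{generic point} $a$ lies in $\ter(B^E)$. Now the frontier argument goes through cleanly, since it is $a$ whose rank over $\ulcorner B\urcorner$ equals $\dim X$. The final verification still works: given $U\subseteq X'$ open and $a\in U^{E'}$, pick $b\in U$ with $a E b$; niceness of $a$ (with this $b$ and $B=U$) gives $a\in\ter(U^E)$, hence $a\in\ter_{X'}(U^{E'})$. Note also that with this orientation niceness is outright $0$-definable (not merely ind-definable), since one may restrict $B$ to a definable basis.
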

\begin{proof}
  Recall from \S \ref{sec:o-eq} that for $S \subseteq X$, the
  \emph{$E$-closure} of $S$, denoted $S^E$, is the union of all
  $E$-equivalence classes that intersect $S$.

  Say that a point $a \in X$ is \emph{nice} if for every $b \in
  \{a\}^E$, and every neighborhood $B$ of $b$,
  \begin{equation*}
    a \in \ter(B^E).
  \end{equation*}
  Note that we could equivalently restrict to basic open neighborhoods, so ``niceness'' is definable.

  \begin{claim}
    Every generic element of $X$ is nice.  That is, if $a\in X$ and
    $\dim(a/\emptyset) = \dim X$, then $a$ is nice.
  \end{claim}
  \begin{proof}
    Suppose otherwise.  Let $a$ be generic, $b$ be another point such
    that $aEb$ holds, and $B$ be an open neighborhood of $b$ such that
    $a \notin \ter(B^E)$.  Shrinking $B$, we may assume by
    Lemma~\ref{tricks}(\ref{trick-indy}) that
    \begin{equation*}
      \ulcorner B \urcorner \forkindep ab.
    \end{equation*}
    As $aEb$ and $b \in B$, we see $a \in B^E$.  But by assumption, $a
    \notin \ter(B^E)$, and so $a \in \partial (X \setminus
    B^E)$.  Then
    \begin{equation*}
      \dim(a/\ulcorner B \urcorner) \le \dim \partial (X \setminus
      B^E) < \dim (X \setminus B^E) \le \dim X = \dim(a/\emptyset),
    \end{equation*}
    contradicting the independence of $a$ and $\ulcorner B \urcorner$.
  \end{proof}

  The set of nice points is a 0-definable subset of $X$.  By
  Lemma~\ref{tricks}(\ref{trick-generic}), there is a 0-definable full
  open subset $X' \subseteq X$ consisting only of nice points.  Let
  $E'$ be the restriction of $X$ to $E$.  Note that a subset of $X'$
  is open as a subset of $X'$ if and only if it is open as a subset of
  $X$.  So we can talk unambiguously about ``open'' sets.

  We claim that $E'$ is an open equivalence relation on $X'$.
  Otherwise, there is an open subset $U$ of $X'$ such that $U^{E'}$ is
  not open.  Take $a \in U^{E'} \setminus \ter(U^{E'})$, and choose a
  point $b \in U$ such that $aE'b$ holds.

  In $X$, we have two $E$-equivalent points $a, b$ and an open
  neighborhood $U$ of $b$.  As $a$ is nice, $a \in
  \ter(U^E)$, meaning that there is a neighborhood $V$ of $a$
  in $X$ such that every point of $V$ is connected via $E$ to a point
  in $U$.  Shrinking $V$, we may assume $V \subseteq X'$.  Then $V$
  and $U$ are in $X'$, so every element of $V$ is connected via $E'$
  to some element of $U$, meaning that $V \subseteq U^{E'}$.  Now $V$
  witnesses that $a \in \ter(U^{E'})$, a contradiction.
\end{proof}

\begin{proposition} \label{part2}
  Let $X \subseteq M^n$ be 0-definable and $E$ be a 0-definable open
  equivalence relation on $X$.  There is a 0-definable full open
  subset $X' \subseteq X$ such that $X'/E$ is Hausdorff.
\end{proposition}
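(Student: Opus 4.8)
The plan is to find a first-order/dimension-theoretic condition on points $a \in X$ that guarantees the Hausdorff property, show it holds generically, and then invoke Lemma~\ref{tricks}(\ref{trick-generic}) to shrink to a full open subset. The failure of Hausdorffness at the level of $X/E$ means there are two distinct classes $[a], [b]$ that cannot be separated by open sets; pulling back, this says: for every neighborhood $B$ of $a$ and every neighborhood $C$ of $b$, the $E$-saturations $B^E$ and $C^E$ intersect — equivalently, there exist $a' \in B$, $b' \in C$ with $a' E b'$. So I would call a pair $(a,b)$ of non-$E$-equivalent points \emph{inseparable} if this holds, and call a point $a$ \emph{good} if there is no $b$ with $(a,b)$ inseparable; "goodness" is definable (one can restrict to basic open neighborhoods as in Proposition~\ref{part1}). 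The key claim to prove is that every generic $a \in X$ is good. Then the good points form a $0$-definable set containing all generics, Lemma~\ref{tricks}(\ref{trick-generic}) gives a full open $0$-definable $X' \subseteq X$ of good points, Lemma~\ref{niceness-of-o-eq}(1) ensures $E \restriction X'$ is still an open equivalence relation, and — since goodness of $a$ as a point of $X$ implies goodness as a point of $X'$ (neighborhoods in $X'$ are neighborhoods in $X$, and $E'$-saturations are contained in $E$-saturations) — the quotient $X'/E$ is Hausdorff.

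For the key claim, suppose $a$ is generic and $(a,b)$ is inseparable for some $b$ with $\neg(aEb)$. Using $\aleph_1$-saturation and compactness, inseparability lets me produce a single pair of points witnessing the obstruction at a generic level: more precisely, I would argue that there is $a'$ with $a' E a$ — wait, rather: inseparability of $(a,b)$ says every pair of neighborhoods $B \ni a$, $C \ni b$ contains an $E$-edge from $B$ to $C$; by saturation I can find $a^* \in \overline{\{a\}}$-close and $b^* $ close to $b$ with $a^* E b^*$, but I want to keep $a$ itself generic. The cleaner route: shrink $C$ around $b$ by Lemma~\ref{tricks}(\ref{trick-indy}) so that $\ulcorner C \urcorner \forkindep b\, a$, and shrink $B$ around $a$ with $\ulcorner B \urcorner \forkindep a\, b \ulcorner C \urcorner$. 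By inseparability pick $a' \in B$, $b' \in C$ with $a' E b'$. Now $a'$ lies in the neighborhood $B$ of the generic point $a$, and the independence was arranged so that $a$ can be taken generic over $\ulcorner B \urcorner \ulcorner C \urcorner b$; since $B$ is a small neighborhood and $a' \in B$, I can in fact take $a' = a$ by choosing the witness appropriately (the set of $a' \in B$ that are $E$-related to some point of $C$ is nonempty and $\ulcorner B\urcorner\ulcorner C\urcorner$-definable, and contains a point independent from nothing relevant — here I may need $B$ small enough that this forces $a'$ generic, hence realizing $\tp(a/\ulcorner B\urcorner\ulcorner C\urcorner)$). Then $a E b'$ with $b' \in C$, so $b' $ ranges over an $a$-definable subset of the arbitrarily small neighborhood $C$ of $b$; taking $C$ to shrink, $b \in \overline{\{b' : b' \in C,\ a E b'\}} \subseteq \overline{\{a\}^E}$. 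Thus $b$ lies in the closure of $\{a\}^E$ but not in $\{a\}^E$, i.e.\ $b \in \partial(\{a\}^E)$ inside $X$, while $\{a\}^E$ is $a$-definable. Hence $\dim(b / a) \le \dim \partial(\{a\}^E) < \dim \{a\}^E \le \dim X$ by Lemma~\ref{tricks}(\ref{trick-dim}). But by symmetry I can run the same argument with the roles of $a$ and $b$ swapped — choosing neighborhoods independent in the other order — to get $\dim(a/b) < \dim X$; combined with $a$ generic this is already a contradiction, since $\dim(a/\emptyset) = \dim X$ forces $\dim(a/b) = \dim X$. (Alternatively: genericity of $a$ plus $\dim(b/a) < \dim X$ gives $\dim(ab/\emptyset) < 2\dim X$ only, which is not yet a contradiction — so the symmetric argument, or an exchange-principle argument showing $b \in \acl$-like position over $a$, is what actually closes it.)

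The main obstacle is exactly this last point: extracting a genuine contradiction from inseparability. Inseparability is a "diagonal" condition and naively only bounds one of $\dim(a/b)$, $\dim(b/a)$; I need to use the symmetry of the relation "$(a,b)$ inseparable" together with the independent choice of neighborhoods on both sides, and possibly the thorn-independence/exchange machinery ($\forkindep$ is symmetric in o-minimal, i.e. rosy, theories), to conclude that $a$ and $b$ are "algebraically tied" over $\emptyset$ in a way incompatible with both being chosen freely — or more simply, to show that generic $a$ cannot be inseparable from \emph{any} $b$ because the relevant frontier has dimension $< \dim X$ and $a$ avoids it. Care is also needed to ensure "goodness" descends correctly to $X'$ (it does, because passing to an open subset only shrinks neighborhoods and $E$-saturations) and that one may restrict the "for all neighborhoods" quantifier to a definable basis so that goodness is a first-order-expressible, $0$-definable property of $a$ — this parallels the treatment of "niceness" in Proposition~\ref{part1} and should go through the same way.
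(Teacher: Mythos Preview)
Your definition of ``good'' asks that a generic $a$ be separable from \emph{every} $b \in X$, generic or not, and this is where the argument breaks.  The step you flag yourself---``$\dim(a/\emptyset)=\dim X$ forces $\dim(a/b)=\dim X$''---is simply false (take $b=a$), and the ``symmetric argument'' you propose as a fallback needs $b$ generic in order to run: the frontier bound $\dim\partial(N_1^E)<\dim X$ only contradicts $\dim(b/\ulcorner N_1\urcorner)$ when the latter equals $\dim X$, which requires $b$ generic.  What your dimension chase actually establishes (once the muddled $a'=a$ passage is cleaned up) is just that any $b$ inseparable from a generic $a$ must itself be non-generic; that is not a contradiction, and no thorn-exchange principle will manufacture one, since nothing prevents a generic $a$ from being inseparable from some specific low-dimensional $b$.

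The paper avoids this trap by weakening the claim and compensating with compactness.  Claim~\ref{cl3.19} only asserts separability when \emph{both} $a$ and $b$ are generic---exactly what the frontier bounds prove---and then one does \emph{not} define $X'$ as the set of ``good'' points.  Instead, let $D\subseteq X\times X$ be the (0-definable) set of pairs that are either $E$-equivalent or separable, and let $\Sigma(x)$ be the partial type expressing genericity; the claim gives $\Sigma(x)\wedge\Sigma(y)\vdash D(x,y)$, so by compactness there is a single 0-definable $X'\supseteq\Sigma$ with $X'\times X'\subseteq D$.  Shrinking $X'$ to a full open subset as in Lemma~\ref{tricks}(\ref{trick-generic}) then gives the desired Hausdorff quotient.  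The point is that the compactness is applied to the \emph{pair} $(x,y)$, not to a one-variable ``goodness'' predicate, and this is precisely what lets you get away with only handling the both-generic case.
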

Here, the topology on $X'/E$ is either the quotient topology from the
subspace topology on $X'$, or the subspace topology from the quotient
topology on $X/E$.  These two topologies agree by
Lemma~\ref{niceness-of-o-eq}.
\begin{proof}
  Let $\pi : X \twoheadrightarrow X/E$ denote the quotient map.
  \begin{claim}\label{cl3.19}
    Let $a$ and $b$ be two generic elements of $X$ (perhaps not jointly
    generic).  If $a$ and $b$ are in different $E$-equivalence classes,
    then there exist basic open neighborhoods $N_1$ and $N_2$ around $a$
    and $b$, respectively, such that $\pi(N_1) \cap \pi(N_2) =
    \emptyset$.
  \end{claim}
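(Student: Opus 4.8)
The plan is to separate $a$ and $b$ using a dimension-counting argument, in the spirit of the ``nice point'' argument of Proposition~\ref{part1}. First I would choose basic open neighborhoods $N_1 \ni a$ and $N_2 \ni b$ using Lemma~\ref{tricks}(\ref{trick-indy}), taken small and with canonical parameters independent from $ab$ over $\emptyset$; concretely, arrange $\ulcorner N_1 \urcorner \ulcorner N_2 \urcorner \forkindep_\emptyset ab$. The goal is to show that for such $N_1, N_2$ (chosen small enough) we have $\pi(N_1) \cap \pi(N_2) = \emptyset$. Suppose not: then $N_1^E \cap N_2^E \ne \emptyset$, i.e.\ some $E$-class meets both $N_1$ and $N_2$, so there exist $a' \in N_1$ and $b' \in N_2$ with $a' E b'$.

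The key step is to derive a contradiction from $a' E b'$ by transferring genericity. Since $a$ is generic in $X$ and $N_1$ is a small open neighborhood of $a$ with $\ulcorner N_1 \urcorner$ independent of $a$, the point $a$ is still generic over $\ulcorner N_1 \urcorner$; and a generic point of $X$ over a parameter set remains in the interior of any definable set to which it belongs with full dimension. More to the point, I would run the argument at the level of the generic element: since $a$ is generic and $N_1$ is an independent-parameter open neighborhood, one can find $a'$ in $N_1$ generic over $\ulcorner N_1 \urcorner \ulcorner N_2 \urcorner b$ (indeed generic points are dense in $N_1$, which is a full-dimensional subset of $X$), hence with $\dim(a' / \ulcorner N_1 \urcorner \ulcorner N_2 \urcorner b) = \dim X$. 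Then $a' E b'$ for some $b' \in N_2$ forces $\dim(b' / \ulcorner N_1 \urcorner \ulcorner N_2 \urcorner a') \ge$ something; but $b'$ lies in the definable set $N_2$, and the fibres of $E$ are controlled — here one uses that $a$ and $b$ are in \emph{different} $E$-classes, which is an open condition one can push to nearby generic points. The cleanest route: define the $\emptyset$-definable set $G = \{(x,y) \in X \times X : x \mathrel{E} y\}$ and note that if $a,b$ are generic and $E$-inequivalent, then the pair $(a,b)$ avoids the ``diagonal-ish'' locus; by the independence of the $N_i$ from $ab$, a generic point of $N_1$ is $E$-inequivalent to every point of $N_2$ once $N_1, N_2$ are small, since otherwise we would get a generic $a' \in N_1$, a point $b' \in N_2$ with $a' E b'$, and then $\dim(a'/\ulcorner N_1\urcorner\ulcorner N_2\urcorner) = \dim X$ while $a'$ lies in the $\ulcorner N_1\urcorner\ulcorner N_2\urcorner$-definable set $\{x \in N_1 : \exists y \in N_2,\ x E y\}$; if this set has dimension $< \dim X$ we are done, and if it has full dimension then its generic point $a'$ is $E$-equivalent to a point of $N_2$, but then transporting back via independence we would conclude $a$ itself is $E$-equivalent to a point arbitrarily close to $b$, and since distinct $E$-classes of generic points can be separated from $b$ (again an open/independence condition coming from the fact that $E$ restricted to generics has locally closed-ish classes), we reach a contradiction with $a \not\mathrel{E} b$.

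The main obstacle I anticipate is making precise the step ``if the set $\{x \in N_1 : \exists y \in N_2,\ x E y\}$ has full dimension, derive a contradiction'': this requires knowing that the $E$-class of the generic point $a$ does not accumulate at $b$, i.e.\ that $b \notin \overline{\{a\}^E}$ or something equivalent, which is where the genericity of $b$ and independence must be exploited, perhaps by a symmetric argument swapping the roles of $a$ and $b$ and using Lemma~\ref{tricks}(\ref{trick-dim}) to bound the frontier of an $E$-class. I would handle this by fixing $N_2$ first with $\ulcorner N_2 \urcorner \forkindep_\emptyset ab$, observing that then $b \notin \overline{\{a\}^E}$ would follow if $\overline{\{a\}^E}$ had dimension $< \dim X$ — but $E$-classes can be large, so instead the right statement is that $\{a\}^E \cap N_2 = \emptyset$ for $N_2$ small, which holds because $b \in N_2$, $b$ is generic, and $b \notin \{a\}^E$ combined with the fact that $\{a\}^E$ is $\ulcorner$(the class)$\urcorner$-definable and $b$ is generic over that parameter (using independence). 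Once that is in hand, the claim follows. Finally, I would invoke Lemma~\ref{tricks}(\ref{trick-generic}) later (in the surrounding proof of Proposition~\ref{part2}, not in this claim) to pass from ``generic points can be separated'' to ``a full open subset is Hausdorff''.
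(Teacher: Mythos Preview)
Your plan correctly identifies the crux: one must show something like $b \notin \overline{\{a\}^E}$ (equivalently $a \notin \overline{\{b\}^E}$), and the frontier bound of Lemma~\ref{tricks}(\ref{trick-dim}) is indeed the relevant tool. But your proposed resolution of that step is wrong. You write that $\{a\}^E \cap N_2 = \emptyset$ for small $N_2$ because ``$b$ is generic over that parameter (using independence)'', where ``that parameter'' is $\ulcorner \{a\}^E \urcorner = \pi(a)$. This is exactly what is \emph{not} assumed: the claim explicitly allows $a$ and $b$ to be non-jointly-generic, so $\dim(b/\pi(a))$ may be strictly less than $\dim X$ --- indeed it may be $0$. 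The only independence you arranged is $\ulcorner N_1 \urcorner \ulcorner N_2 \urcorner \forkindep ab$, which says nothing about $b \forkindep \pi(a)$. Without that, knowing $\dim \partial\{a\}^E < \dim \{a\}^E$ does not prevent $b$ from lying on that frontier.

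The paper handles this with an additivity argument that bypasses the missing independence. Assuming $a \in \partial(\{b\}^E)$, take $c \in \{b\}^E$ of maximal rank over $\pi(b)$ and compute
\begin{align*}
\dim(a/\emptyset) &\le \dim(a/\pi(b)) + \dim(\pi(b)/\emptyset) \le \dim\bigl(\partial\{b\}^E\bigr) + \dim(\pi(b)/\emptyset) \\
&< \dim\bigl(\{b\}^E\bigr) + \dim(\pi(b)/\emptyset) = \dim(c/\pi(b)) + \dim(\pi(b)/\emptyset) = \dim(c/\emptyset) \le \dim X,
\end{align*}
contradicting genericity of $a$. The point is that any drop in $\dim(a/\pi(b))$ forced by lying on the frontier cannot be recovered by $\dim(\pi(b)/\emptyset)$, since base plus generic fibre already sums to at most $\dim X$. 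Once $a \notin \overline{\{b\}^E}$ is established, the paper chooses $N_1$ disjoint from $\{b\}^E$ with $\ulcorner N_1 \urcorner \forkindep b$ (now Lemma~\ref{tricks}(\ref{trick-indy}) suffices), so that $b$ \emph{is} generic over $\ulcorner N_1 \urcorner$, hence $b \notin \partial(N_1^E)$; combined with $b \notin N_1^E$ this gives $b \notin \overline{N_1^E}$, and $N_2$ is then chosen last. This sequential construction is also cleaner than your simultaneous choice of $N_1, N_2$, which would require a dimension bound on $\{x \in N_1 : \exists y \in N_2,\ xEy\}$ that you never actually establish.
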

  \begin{proof}
    We claim that $a \notin \overline{\{b\}^E}$.  Suppose otherwise.
    Then $a \in \partial( \{b\}^E)$.  Let $c$ be an element of $\{b\}^E$
    of maximal rank over $\pi(b)$.  Then
    \begin{align*}
      \dim(a/\emptyset) & \le \dim(a \pi(b)/\emptyset) = \dim(a/\pi(b)) +
      \dim(\pi(b)/\emptyset) 
      \\ & \le \dim(\partial (\{b\}^E)) +
      \dim(\pi(b)/\emptyset) < \dim(\{b\}^E) + \dim(\pi(b)/\emptyset) 
      \\ & =
      \dim(c/\pi(b)) + \dim(\pi(b)/\emptyset) = \dim(c \pi(b)/\emptyset)
      \\ & = \dim(c/\emptyset) \le \dim X,
    \end{align*}
    contradicting the fact that $a$ is generic.

    So $a$ is not in the closure of $\{b\}^E$, and therefore some open
    neighborhood $N_1$ of $a$ is disjoint from $\{b\}^E$.  Shrinking
    $N_1$ slightly, we may assume by
    Lemma~\ref{tricks}(\ref{trick-indy}) that $\ulcorner N_1
    \urcorner$ is independent from $b$.  Because $b$ is then generic
    over $\ulcorner N_1 \urcorner$, we see that $b \notin \partial (
    N_1^E)$.  Now by choice of $N_1$, $b \notin N_1^E$.  Therefore $b
    \notin \overline{N_1^E}$.  So we can find an open neighborhood
    $N_2$ of $b$, disjoint from $N_1^E$.  The fact that $N_2$ is
    disjoint from $N_1^E$ means exactly that $\pi(N_1)$ and $\pi(N_2)$
    are disjoint.
  \end{proof}
  Let $\Sigma(x)$ be the partial type over $\emptyset$ asserting that
  $x$ is generic over $\emptyset$.  Let $D$ be the set of pairs $(x,y)
  \in X \times X$ such that either $\pi(x) = \pi(y)$ or there exist
  neighborhoods $N_1$ of $x$ and $N_2$ of $y$ such that $\pi(N_1)$ and
  $\pi(N_2)$ are disjoint.  Note that $D$ is 0-definable.  By Claim~\ref{cl3.19},
  \begin{equation*}
    \Sigma(x) \wedge \Sigma(y) \vdash (x,y) \in D
  \end{equation*}
  By compactness, there is some 0-definable set $X'$ such that
  \begin{equation*}
    \Sigma(x) \vdash x \in X'
  \end{equation*}
  and $X' \times X' \subseteq D$.  Shrinking $X'$ a little, we may
  assume $X'$ is a full open subset of $X$, as in the proof of
  Lemma~\ref{tricks}(\ref{trick-generic}).

  Let $E'$ be the restriction of $E$ to $X'$.  By
  Lemma~\ref{niceness-of-o-eq}, $X'/E'$ is an open subset of $X/E$,
  and $E'$ is an open equivalence relation on $X'$.  We claim that
  $X'/E'$ is Hausdorff.

  Let $a_0, b_0$ be two distinct elements of $X'/E'$, and let $a$ and
  $b$ be lifts of $a_0$ and $b_0$ to $X'$. By choice of $X'$, the pair
  $(a,b)$ is in $D$.  As $\pi(a) = a_0 \ne b_0 = \pi(b)$, $a$ and $b$
  are not $E$-equivalent.  By definition of $D$, there exist
  neighborhoods $N_1$ and $N_2$ in $X$, around $a$ and $b$, such that
  $\pi(N_1)$ is disjoint from $\pi(N_2)$. Because $\pi : X \to X/E$ is
  an open map, $\pi(N_1)$ is a neighborhood of $a_0$, and $\pi(N_2)$
  is an open neighborhood of $b_0$.  Therefore, $a_0$ and $b_0$ can be
  separated by open neighborhoods in $X/E$, hence also in $X'/E'$.
\end{proof}

\begin{proposition} \label{part3}
  Let $X \subseteq M^n$ be 0-definable and $E$ be a 0-definable open
  equivalence relation on $X$, with $X/E$ Hausdorff.  Then there is a
  0-definable full open subset $X' \subseteq X$ such that $X'/E$ is
  locally Euclidean.
\end{proposition}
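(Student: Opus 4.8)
The plan is to work generically and produce a chart around a generic point, then spread it out to a full open set. Fix a generic element $a \in X$, and let $d = \dim X$. Since $E$ is an open equivalence relation and $X/E$ is Hausdorff, the quotient map $\pi : X \twoheadrightarrow X/E$ is open, so $\pi$ carries open neighborhoods of $a$ to open neighborhoods of $\pi(a)$. The idea is that near a generic point, the $E$-classes meeting a small box should form a definable "foliation" of that box, so that a generic coordinate projection restricted to a transversal gives a homeomorphism onto an open subset of $M^k$ for $k = d - \dim(\{a\}^E \cap B)$, where $B$ is a small box around $a$. Concretely, first I would use o-minimal cell decomposition (applied uniformly, using $\aleph_1$-saturation) to understand the $E$-class of $a$: after choosing a box $B \ni a$ with $\ulcorner B \urcorner \forkindep a$ by Lemma~\ref{tricks}(\ref{trick-indy}), the intersection $\{a\}^E \cap B$ is a definable set of some dimension $m$, and generically it is a cell, so some coordinate subspace $M^m \subseteq M^n$ projects it homeomorphically. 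Complementarily, I want a $k$-dimensional "slice" $T \subseteq B$ through $a$, transverse to $\{a\}^E$, such that $T$ meets each nearby $E$-class in exactly one point — i.e., $\pi \restriction T$ is injective — and such that $T^E$ is open (automatic, $E$ being open) and equals a neighborhood of $a$.

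The key steps, in order: (1) Show that for generic $a$ and a suitably independent small box $B$, there is a definable family of slices and the restriction of $\pi$ to a generic slice $T$ is injective on a neighborhood of $a$ in $T$; the injectivity is where Hausdorffness and genericity enter, since two distinct points of $T$ close to $a$ lying in the same $E$-class would, after taking $B$ small and independent, contradict a dimension count as in Claim~\ref{cl3.19}. (2) Show $\pi(T)$ is open in $X/E$ and that $\pi \restriction T : T \to \pi(T)$ is an open map — this follows because $T^E$ is open (so $\pi(T) = \pi(T^E)$ is open) and because the composite $T \hookrightarrow X \twoheadrightarrow X/E$ should be open after restricting to the locus where $T$ is transversal; alternatively, invoke definable compactness of a closed sub-box $\overline{T_0} \subseteq T$ together with Lemma~\ref{no-space-filling-injections} to get that $\pi \restriction \overline{T_0}$ is a homeomorphism onto its image, then pass to interiors. (3) Compose with the coordinate projection $M^n \to M^k$ identifying $T$ with an open subset of $M^k$; this exhibits $X/E$ as Euclidean at $\pi(a)$. (4) Finally, the property "$\pi(a)$ has a Euclidean neighborhood" is, after unwinding, a definable property of $a$ that holds at every generic point, so by Lemma~\ref{tricks}(\ref{trick-generic}) it holds on a full open $0$-definable subset $X' \subseteq X$; by Lemma~\ref{niceness-of-o-eq} the topology on $X'/E$ agrees with the subspace topology from $X/E$, so $X'/E$ is locally Euclidean, and shrinking to keep $X'$ open and full completes the proof.

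The main obstacle I anticipate is Step (1) — constructing the transversal slice $T$ and proving $\pi \restriction T$ is injective near $a$ in a way that is uniform enough to be definable. The transversality has to be arranged generically: one wants to choose the "slice directions" (a coordinate subspace complementary to the one on which $\{a\}^E$ projects homeomorphically) independently of $a$, and then argue via cell decomposition that for generic $a$ the map from the slice to the quotient is a local homeomorphism. Injectivity near $a$ cannot hold on the nose for a fixed-size slice — it is a "germ" statement — so the argument must produce, definably in $a$, a radius within which injectivity holds; this is where $\aleph_1$-saturation and uniform bounds (as already used to pass from local to uniform-local Euclideanity in the reduction before Lemma~\ref{tricks}) do the work. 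Once injectivity of $\pi$ on a small enough slice is in hand, definable compactness of closed sub-boxes plus Lemma~\ref{no-space-filling-injections} and Lemma~\ref{kpct-img} should upgrade it cleanly to a homeomorphism onto an open subset of the quotient, and the rest is bookkeeping with Lemma~\ref{tricks}(\ref{trick-generic}).
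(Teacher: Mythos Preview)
Your overall strategy---build a chart at a generic point, upgrade injectivity to a homeomorphism via Lemma~\ref{no-space-filling-injections}, then spread out with Lemma~\ref{tricks}(\ref{trick-generic})---matches the paper's, but there is a real gap in Step~(2). The claim that ``$T^E$ is open (automatic, $E$ being open)'' is false: an open equivalence relation guarantees $U^E$ open only for \emph{open} $U$, and your $k$-dimensional slice $T$ is not open in $X$ when $k < \dim X$. Your alternative via Lemma~\ref{no-space-filling-injections} gives only that $\pi(\ter T_0)$ is open in the subspace $\pi(\overline{T_0})$, not in $X/E$. So neither route shows that $\pi(T)$ is a neighborhood of $\pi(a)$. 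The underlying obstruction is that $T$ passes through $a$ and is therefore defined over parameters depending on $a$; hence $a$ is not generic over $\ulcorner T \urcorner$, and the ``generic $\implies$ interior'' move is unavailable. You flag Step~(1) as the main obstacle, but it is Step~(2) that actually breaks.

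The paper's fix is to build the chart not at $a$ but at a second point $b$ with $\tp(b/e) = \tp(a/e)$ and $b \forkindep_e a$, where $e = \pi(a)$. One decomposes $b = b_1 b_2 b_3$ with $b_1$ a basis over $e$ and $b_1 b_2$ a basis over $\emptyset$, obtaining 0-definable functions with $b_2 = f(b_1, e)$ and $b_3 = g(b_1, b_2)$; the chart is $h(x) = \pi(b_1, x, g(b_1, x))$ on a closed box $B \ni b_2$ chosen independent of a countable model containing $a,b$. Injectivity of $h$ is then immediate from the identity $x = f(b_1, h(x))$---no dimension count is needed. The payoff of working at $b$ rather than $a$ is that the chart parameters $b_1, \ulcorner B \urcorner$ end up independent from $a$ (since $\dim(b_1/a) = \dim(b_1/e) = |b_1|$), so $a$ \emph{is} generic over them; hence $a$ lies in the interior of $\pi^{-1}(h(\ter B))$, and openness of $\pi$ then gives $\pi(a) \in \ter(h(\ter B))$ in $X/E$, which is exactly the missing step in your outline.
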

In the proposition, note that the topologies on $X/E$ and $X'/E$ are definable, thanks to
Lemma~\ref{quot-def}.
\begin{proof}
  Let $\pi : X \to X/E$ be the quotient map.
  \begin{claim}
    It suffices to show that $X/E$ is Euclidean at $\pi(a)$ for every
    generic $a \in X$.
  \end{claim}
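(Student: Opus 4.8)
The plan is to prove the Claim by taking $P \subseteq X$ to be the set of points $a$ such that $X/E$ is Euclidean at $\pi(a)$, and feeding $P$ into Lemma~\ref{tricks}(\ref{trick-generic}). The hypothesis of the Claim is exactly that $P$ contains every generic element of $X$; so, once we know $P$ is $0$-ind-definable, Lemma~\ref{tricks}(\ref{trick-generic}) supplies a $0$-definable full open subset $X' \subseteq X$ with $X' \subseteq P$, and this $X'$ will be the subset demanded by Proposition~\ref{part3}.

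First I would check that $P$ is $0$-ind-definable. By Lemma~\ref{quot-def}, $X/E$ carries a definable topology; fix a definable basis of opens for it, for instance the pushforward under the open map $\pi$ of a definable basis of $X$. Relative to such a basis, the conditions ``$U$ is open'', ``$U$ is a neighborhood of $\pi(a)$'', and ``$f$ is a continuous bijection onto $V$ with continuous inverse'' are all first-order expressible in the relevant data. Hence, for each formula $\varphi$ and each $k$, the set
\begin{equation*}
  P_{\varphi,k} := \{ a \in X : \exists \bar z \ \varphi(\cdot,\cdot,\bar z) \text{ is the graph of a homeomorphism from an open neighborhood of } \pi(a) \text{ onto an open subset of } M^k \}
\end{equation*}
is $0$-definable, since the chart parameters $\bar z$ are existentially quantified over a first-order condition. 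As the language is countable, $P = \bigcup_{\varphi,k} P_{\varphi,k}$ is a countable union of $0$-definable sets, hence $0$-ind-definable. This is the one step I expect to need care: turning ``Euclidean at $\pi(a)$'' into a genuine first-order condition on $(a,\bar z)$ for a fixed chart, so that the existential quantifier over chart data preserves definability.

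Granting this, Lemma~\ref{tricks}(\ref{trick-generic}) gives a $0$-definable full open subset $X' \subseteq X$ with $X' \subseteq P$. Since $E$ is an open equivalence relation on $X$ and $X'$ is open in $X$, Lemma~\ref{niceness-of-o-eq}(3) shows that $X'/E \hookrightarrow X/E$ is an open embedding, so $X'/E$ is (homeomorphic to) an open subspace of $X/E$. Now every point of $X'/E$ is $\pi(a)$ for some $a \in X' \subseteq P$, so $X/E$ is Euclidean at that point; restricting the corresponding chart $f : U \to V$ to $U \cap X'/E$ — which is still open in $X'/E$ and is carried homeomorphically by $f$ onto an open subset of $M^k$ — shows $X'/E$ is Euclidean there as well. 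Hence $X'/E$ is locally Euclidean, which is precisely the conclusion of Proposition~\ref{part3}. So it does suffice to prove that $X/E$ is Euclidean at $\pi(a)$ for every generic $a \in X$.
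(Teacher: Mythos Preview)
Your argument is correct and follows essentially the same approach as the paper: define $P$ as the set of $a$ where $X/E$ is Euclidean at $\pi(a)$, observe it is $0$-ind-definable, apply Lemma~\ref{tricks}(\ref{trick-generic}) to extract a full open $X'$, and use the open embedding $X'/E \hookrightarrow X/E$ to transfer Euclideanity. You give more detail than the paper on why $P$ is ind-definable and correctly invoke Lemma~\ref{niceness-of-o-eq} for the open embedding (the paper cites Lemma~\ref{quot-def} there, which appears to be a typo).
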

  \begin{proof}
    The set of $a \in X$ such that local Euclideanity holds at
    $\pi(a)$ is \emph{ind-definable} over $\emptyset$.  By Lemma~\ref{tricks}(\ref{trick-generic}), if this set
    includes every generic of $X$, then there must be a 0-definable
    full open subset $X'$ of $X$ such that local Euclideanity holds at
    $\pi(a)$ for all $a \in X'$.  By Lemma~\ref{quot-def} the map of
    quotient spaces $X'/E \hookrightarrow X/E$ is an open embedding.
    Therefore, $X'/E$ is also Euclidean at $\pi(a)$, for every $a \in
    X'$.  In other words, $X'/E$ is locally Euclidean.
  \end{proof}
  So assume that $a \in X$ is generic.  Let $e = \pi(a)$ be the image
  of $a$ in $X/E$.  We will show that $X/E$ is Euclidean at $e$, i.e.,
  that some neighborhood of $e$ is definably homeomorphic to an open
  subset of $M^k$ for some $k$.


  Choose $b$ such that $\tp(a/e) = \tp(b/e)$ and $a \forkindep_e b$.  Note that $e = \pi(b)$.
  \begin{claim}
    After re-ordering coordinates, we may write $b = b_1b_2b_3$ where
    \begin{itemize}
    \item $b_3 \in \dcl(b_1b_2)$
    \item $\dim(b_1b_2/\emptyset) = |b_1b_2|$.
    \item $b_2 \in \dcl(b_1e)$
    \item $\dim(b_1/e) = |b_1|$.
    \end{itemize}
  \end{claim}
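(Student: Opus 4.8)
The plan is to decompose the tuple $b$ according to how its coordinates depend on $e$ and on each other, using the exchange property of o-minimal dimension. First I would discard redundant coordinates: let $b_1b_2$ be a maximal subtuple of $b$ (after reordering) with $\dim(b_1b_2/\emptyset) = |b_1b_2|$, i.e.\ an algebraically independent subtuple of maximal size, and let $b_3$ be the remaining coordinates. By maximality and exchange, every coordinate of $b_3$ lies in $\operatorname{acl}^{\text{eq}}(b_1b_2) = \dcl(b_1b_2)$ in an o-minimal structure (algebraic closure equals definable closure for real tuples, since a finite definable set of reals is pointwise definable); this gives $b_3 \in \dcl(b_1b_2)$ and $\dim(b_1b_2/\emptyset) = |b_1b_2|$.

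Next I would split $b_1b_2$ according to $e$. Within the algebraically independent tuple $b_1b_2$, choose $b_2$ to be a maximal subtuple with $b_2 \subseteq \dcl(b_1'e)$ where $b_1'$ is the complementary subtuple — more carefully, arrange the coordinates so that $b_1$ is a subtuple with $\dim(b_1/e) = |b_1|$, i.e.\ $b_1$ is independent over $e$, chosen maximal among subtuples of $b_1b_2$, and $b_2$ is the rest. Then $\dim(b_1b_2/e) = |b_1|$ by the choice of $b_1$ together with $\dim(b_1b_2/e) \le \dim(b_1b_2/\emptyset) = |b_1b_2|$ and subadditivity; and by maximality of $b_1$ every coordinate of $b_2$ is in $\operatorname{acl}^{\text{eq}}(b_1e) = \dcl(b_1e)$. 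Again algebraic closure of a real tuple over parameters is definable closure, so $b_2 \in \dcl(b_1e)$, and $\dim(b_1/e) = |b_1|$.

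The only subtle point is the interaction of the two decompositions: I must ensure that after the second split I have not destroyed $\dim(b_1b_2/\emptyset) = |b_1b_2|$, but this quantity only refers to the set of coordinates $\{$those of $b_1$ and $b_2\}$, which is unchanged — I have merely relabeled which coordinates are called $b_1$ versus $b_2$. Likewise $b_3 \in \dcl(b_1b_2)$ is unaffected by relabeling within $b_1b_2$. So the four bullet points hold simultaneously. I expect the main obstacle to be purely bookkeeping: keeping straight that ``reordering coordinates'' is allowed and that all the dimension identities are about fixed sets of coordinates, so that the two maximality choices (independence over $\emptyset$, then independence over $e$ inside that) do not conflict. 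The genericity of $a$ and the independence $a \forkindep_e b$ are not needed for this claim itself — they will be used in the sequel — so the proof is entirely a matter of extracting a good coordinate basis via exchange.
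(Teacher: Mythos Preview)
Your proposal is correct and follows exactly the same approach as the paper: first extract a maximal independent subtuple $b_1b_2$ of $b$ in the pregeometry of $\dcl$ over $\emptyset$, then extract a maximal independent subtuple $b_1$ of $b_1b_2$ in the pregeometry of $\dcl$ over $e$. The paper's proof is just these two sentences; your additional remarks about $\acl = \dcl$ for real tuples and the bookkeeping of relabeling are correct elaborations but not strictly needed, since the paper phrases everything directly in terms of the pregeometry of definable closure.
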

  \begin{proof}
    In the pregeometry of definable closure over $\emptyset$, take
    $b_1b_2$ to be a maximal independent subset of $b$.  In the
    pregeometry of definable closure over $e$, take $b_1$ to be a
    maximal independent subset of $b_1b_2$.
  \end{proof}
  Let $f$ and $g$ be 0-definable functions such that
  \begin{equation*}
    b_2 = f(b_1, e)
  \end{equation*}
  \begin{equation*}
    b_3 = g(b_1, b_2)
  \end{equation*}
  Let $N$ be a countable model containing $a, b$, and let $B$ be a closed
  box with $b_2$ in its interior, such that
  \begin{equation*}
    \ulcorner B \urcorner \forkindep N
  \end{equation*}
  and such that $B$ is contained in every $N$-definable open
  neighborhood of $b_2$.

  Because $b_1, b_2$ are generic in $M^{|b_1b_2|}$, the function $g$
  is continuous on an open neighborhood of $b_1b_2$.  In particular,
  $g$ is continuous on $\{b_1\} \times B$.

  The set of $x$ such that
  \begin{equation} \label{whatever}
    x = f(b_1, \pi(b_1, x, g(b_1, x)))
  \end{equation}
  contains $b_2$, and is $b_1$-definable.  Since $b_1b_2$ is generic
  over $\emptyset$, $b_2$ is generic over $b_1$.  Therefore, $b_2$ is
  in the interior of the set of $x$ such that (\ref{whatever}) holds.
  Consequently, (\ref{whatever}) holds for $x \in B$.

  Let $h : B \to X/E$ be the map given by
  \begin{equation*}
    h(x) = \pi(b_1, x, g(b_1, x))
  \end{equation*}
  Then $h$ is continuous on $B$ (because $g$ is continuous there, and
  $\pi$ is continuous everywhere).  Furthermore, (\ref{whatever})
  shows that $h$ is injective.  By
  Lemma~\ref{no-space-filling-injections}, $B$ is homeomorphic to
  $h(B)$.  Consequently, $\ter(B)$ is homeomorphic to
  $h(\ter(B))$.

  To complete the proof of local Euclideanity around $\pi(a)$, it
  suffices to show that $\pi(a)$ is in the interior of
  $h(\ter(B))$.  The set of $x \in X$ such that
  \begin{equation*}
    \pi(x) \in h(\ter(B))
  \end{equation*}
  is definable over $b_1\ulcorner B \urcorner$, and contains $a$.  It
  suffices to show that $a$ is generic (in $X$) over $b_1 \ulcorner B
  \urcorner$.

  To see this, note that
  \begin{equation*}
    \dim(b_1/a) = \dim(b_1/e) = |b_1| = \dim(b_1/\emptyset).
  \end{equation*}
  So $b_1$ is independent from $a$.  As $\ulcorner B \urcorner$ is
  independent from everything in $N$, the sequence
  \begin{equation*}
    a, b_1, \ulcorner B \urcorner
  \end{equation*}
  is independent.  Then $\dim(a/b_1 \ulcorner B \urcorner) =
  \dim(a/\emptyset) = \dim X$, and so $a$ is generic over $b_1
  \ulcorner B \urcorner$.
\end{proof}

We now prove Theorem~\ref{better-theorem}
\begin{proof}[Proof (of Theorem~\ref{better-theorem})]
  As noted previously, we may assume the language is countable and the
  ambient model is $\aleph_1$-saturated.  By Proposition~\ref{part1},
  we may find a 0-definable full open subset $X_1 \subseteq X$ such
  that the restriction $E_1 := E \restriction X_1$ is an open
  equivalence relation.  By Proposition~\ref{part2} applied to $X_1$
  and $E_1$, there is a 0-definable full open subset $X_2 \subseteq
  X_1$ such that $X_2/E_1$ is Hausdorff.  By Proposition~\ref{part3}
  applied to $X_2$ and $E_1 \restriction X_2$, there is a 0-definable
  full open subset $X_3 \subseteq X_2$ such that $X_3/E_1$ is locally
  Euclidean.

  Take $X' = X_3$.  The relations ``full subset'' and ``open subset''
  are transitive, so $X_3$ is a full open subset of $X$.  By
  Lemma~\ref{niceness-of-o-eq},
  \begin{itemize}
  \item $E \restriction X_3 = E_1 \restriction X_3$ is an open
    equivalence relation on $X_3$, because $X_3$ is open in $X_1$.
  \item The inclusion $X_3/E \hookrightarrow X_2/E$ is an open
    embedding, and therefore $X_3/E$ is Hausdorff.
  \end{itemize}
\end{proof}

\section{Tameness in the quotient topology}
\label{sec:admiss}
Say that a topology on an interpretable set $Y$ is \emph{admissible}
if it is Hausdorff and there is a definable set $X \subseteq M^n$ and
a surjective definable (continuous) open map $X \twoheadrightarrow Y$
where $X$ has the subspace topology from $X \subseteq M^n$.
Admissible topologies are definable by Lemma~\ref{quot-def}.  The
quotient topologies of Theorem~\ref{better-theorem} are admissible and
locally Euclidean.

\begin{remark}
  If $Y_1$ and $Y_2$ are two interpretable sets with admissible
  topologies, then the disjoint union topological space $Y_1 \coprod
  Y_2$ is also admissible.
\end{remark}
We leave the proof as an exercise to the reader.

\begin{proposition} \label{adm-exist}
  Every interpretable set admits an admissible locally Euclidean
  topology.
\end{proposition}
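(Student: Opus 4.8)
The plan is to prove, by induction on $\dim X$, the following statement: for every definable $X \subseteq M^n$ and every definable equivalence relation $E$ on $X$, the interpretable set $X/E$ carries an admissible locally Euclidean topology. For the base case, take $X$ finite (so $\dim X$ is $0$ or, vacuously, $-\infty$); then $X/E$ is finite, and one checks directly that the discrete topology works: the quotient map $X \twoheadrightarrow X/E$ is a definable continuous open surjection out of the discrete definable set $X \subseteq M^n$, and the discrete topology is Hausdorff, so it is admissible; and it is locally Euclidean since each singleton is an open neighborhood definably homeomorphic to $M^0$.

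For the inductive step, suppose $\dim X = d \ge 1$ and the statement holds in all dimensions $< d$. Apply Theorem~\ref{better-theorem} to obtain a definable full open subset $X' \subseteq X$ such that, writing $E' := E \restriction X'$, the relation $E'$ is an open equivalence relation on $X'$ and the quotient topology on $X'/E'$ is Hausdorff and locally Euclidean. Let $\pi : X \twoheadrightarrow X/E$ be the quotient map and partition
\[ X/E \;=\; Y_1 \sqcup Y_2, \qquad Y_1 := \pi(X'), \quad Y_2 := (X/E) \setminus Y_1 \]
into definable subsets. I would topologize $Y_1$ using the topology produced by Theorem~\ref{better-theorem}, topologize $Y_2$ using the induction hypothesis, and declare $Y_1$ and $Y_2$ to be clopen in $X/E$ — i.e. give $X/E$ the disjoint-union topology.

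For $Y_1$: since $x E x' \iff x E' x'$ for $x,x' \in X'$, the restriction $\pi \restriction X'$ induces a definable bijection between $X'/E'$ and $Y_1$; transport the quotient topology of $X'/E'$ across it. Then $Y_1$ is homeomorphic to $X'/E'$, hence Hausdorff and locally Euclidean, and $\pi \restriction X' : X' \twoheadrightarrow Y_1$ is precisely a definable continuous open surjection (continuous as a quotient map, open because $E'$ is an open equivalence relation) from a definable subset of $M^n$; so the topology on $Y_1$ is admissible. For $Y_2$: an $E$-class lies in $Y_2$ exactly when it is disjoint from $X'$, so, setting
\[ Z := \{\, x \in X \setminus X' : [x]_E \cap X' = \emptyset \,\}, \]
the set $Z$ is definable, is a union of $E$-classes, and $Y_2 = \pi(Z) = Z/(E \restriction Z)$. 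Since $X'$ is a full subset of $X$, either $X \setminus X' = \emptyset$ — in which case $Y_2 = \emptyset$ and $X/E = Y_1$ is already handled — or $\dim Z \le \dim(X \setminus X') < d$, so the induction hypothesis equips $Z/(E \restriction Z) \cong Y_2$ with an admissible locally Euclidean topology. Finally, with $Y_1$ and $Y_2$ clopen in $X/E$, the resulting topology on $X/E$ is admissible by the preceding Remark (a disjoint union of admissible topological spaces is admissible), and it is locally Euclidean since every point lies in one of the clopen pieces $Y_i$, each of which is locally Euclidean. This closes the induction.

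The place I expect to need the most care is the identification of $Y_2$ with a quotient of a \emph{lower-dimensional} definable set: one must quotient the set $Z$ of points \emph{whose entire $E$-class misses $X'$}, not $X \setminus X'$ itself — the quotient $(X \setminus X')/(E \restriction (X \setminus X'))$ would be too large, artificially splitting those $E$-classes that straddle $X'$. The remaining verifications are routine: that carrying the Theorem~\ref{better-theorem} topology over to $Y_1$ preserves admissibility (the witnessing surjection is exactly $\pi \restriction X'$), and that the disjoint-union topology inherits local Euclideanity and admissibility. Note that we deliberately make no attempt to relate the resulting topology to the quotient topology on $X/E$, which in general fails to be Hausdorff (the line with doubled origin), so splitting off $Y_1$ and $Y_2$ as clopen pieces is exactly the right maneuver.
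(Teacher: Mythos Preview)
Your proof is correct and follows essentially the same approach as the paper's own proof: induction on $\dim X$, using Theorem~\ref{better-theorem} to topologize the large piece $Y_1 = \pi(X')$, and applying the inductive hypothesis to the lower-dimensional preimage of the remainder before taking the disjoint-union topology. Your set $Z = \{x \in X : [x]_E \cap X' = \emptyset\}$ is exactly the paper's $f^{-1}(Y \setminus Y')$, and your extra care in distinguishing $Z$ from $X \setminus X'$ and in spelling out the base case is welcome detail that the paper leaves implicit.
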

\begin{proof}
  If $f : X \to Y$ is a definable surjection from a definable set to
  an interpretable set, then $Y$ admits an admissible locally
  Euclidean topology.  We prove this by induction on $\dim(X)$.  By
  Theorem~\ref{better-theorem}, there is an open subset $X' \subseteq
  X$ such that the quotient topology on $f(X')$ is admissible and
  locally Euclidean, and such that $\dim(X \setminus X') < \dim(X)$.
  Let $Y' = f(X')$.  Then
  \begin{equation*}
    f^{-1}(Y \setminus Y') \subseteq X \setminus X'
  \end{equation*}
  Therefore, the inductive hypothesis can be applied to the surjection
  \begin{equation*}
    f^{-1}(Y \setminus Y') \twoheadrightarrow Y \setminus Y',
  \end{equation*}
  showing that $Y \setminus Y'$ admits an admissible locally Euclidean
  topology.  Taking the disjoint union of this topological space with
  the quotient topology on $Y' = f(X')$ gives an admissible topology
  on $Y$.
\end{proof}

We now show that admissible locally Euclidean topologies have some
tameness properties.

\begin{proposition} \label{subspace-adm}
  If $Y$ is an interpretable set with an admissible topology, then the
  subspace topology on any definable subset of $Y$ is also admissible.
\end{proposition}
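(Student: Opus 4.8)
The plan is to pull back the witnessing surjection along the inclusion $D \hookrightarrow Y$. Fix a definable set $X \subseteq M^n$ together with a surjective definable continuous open map $f : X \twoheadrightarrow Y$ witnessing that the topology on $Y$ is admissible, where $X$ carries the subspace topology from $M^n$. Given a definable subset $D \subseteq Y$, set $X_D := f^{-1}(D)$. This is a definable subset of $M^n$ (as $f$ is definable and $D$ is definable), and the subspace topology it inherits as a subset of $X$ coincides with the one it inherits as a subset of $M^n$, since subspace topologies compose.

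First I would observe that $g := f \restriction X_D : X_D \to D$ is a surjective continuous map. Surjectivity holds because $f$ is surjective and every preimage of a point of $D$ automatically lies in $f^{-1}(D) = X_D$; continuity is inherited from that of $f$.

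The one step requiring a little care is showing that $g$ is an \emph{open} map onto $D$. Every open subset of $X_D$ has the form $V \cap X_D$ for some open $V \subseteq X$, and I claim that $g(V \cap X_D) = f(V) \cap D$. The inclusion ``$\subseteq$'' is immediate. For ``$\supseteq$'', suppose $y \in f(V) \cap D$; then $y = f(x)$ for some $x \in V$, and $y \in D$ forces $x \in f^{-1}(D) = X_D$, so $y = g(x) \in g(V \cap X_D)$. Since $f$ is an open map, $f(V)$ is open in $Y$, hence $f(V) \cap D$ is open in the subspace $D$; this shows $g$ is open. This identity, which quietly uses surjectivity of $f$ and the equality $D = f(f^{-1}(D))$, is the only real (and quite mild) point of the argument.

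Finally, $D$ is Hausdorff, being a subspace of the Hausdorff space $Y$. Thus $g : X_D \twoheadrightarrow D$ is a surjective definable continuous open map from a definable subset of $M^n$ onto $D$, with $X_D$ carrying the subspace topology and $D$ Hausdorff; by definition this witnesses that the subspace topology on $D$ is admissible.
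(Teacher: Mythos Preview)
Your proof is correct and follows essentially the same approach as the paper's: both pull back along $f$ to $X_D = f^{-1}(D)$, establish the identity $f(V \cap X_D) = f(V) \cap D$ to show the restricted map is open, and note that Hausdorffness is inherited by subspaces. (One small quibble: the identity $f(V \cap X_D) = f(V) \cap D$ does not actually use surjectivity of $f$; surjectivity is only needed to ensure $g$ is onto $D$.)
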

\begin{proof}
  Let $Y' \subseteq Y$ be a definable subset.  Let $f : X
  \twoheadrightarrow Y$ be the surjection witnessing that the topology
  on $Y$ is admissible.  Let $X' = f^{-1}(Y')$.  Note that $f(U \cap
  X') = f(U) \cap Y'$ for any $U \subseteq X$.  Therefore
  \begin{align*}
    \{U \subseteq Y' : U \text{ is open in } Y'\} & = \{ U \cap Y' : U \text{ is open in } Y\} \\
    &= \{f(U) \cap Y' : U \text{ is open in } X\} \\
    &= \{f(U \cap X') : U \text{ is open in } X\} \\
    &= \{f(U) : U \text{ is open in } X'\}
  \end{align*}
  It follows that the map $f \restriction X'$ is an open map from $X'$
  to $Y'$.  Subspaces of Hausdorff spaces are Hausdorff, so $Y'$ is
  admissible.
\end{proof}

\begin{proposition} \label{finite-pi-0}
  If $Y$ is an interpretable set with an admissible topology, then
  every definable subset of $Y$ can be written as a finite union of
  definably connected sets.
\end{proposition}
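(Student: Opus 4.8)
The plan is to transfer the question to a definable subset of $M^n$, where the structure theory of o-minimal sets does the work, and then push forward along the witnessing surjection. Fix a definable subset $Y' \subseteq Y$, and let $f : X \twoheadrightarrow Y$ be a continuous definable surjection from a definable set $X \subseteq M^n$ witnessing admissibility of the topology on $Y$. Put $X' := f^{-1}(Y')$, a definable subset of $M^n$. (Alternatively, one can first apply Proposition~\ref{subspace-adm} to replace $Y$ by $Y'$, but the preimage version is just as convenient and does not even need openness of $f$.)

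First I would invoke the standard o-minimal fact that the definable set $X' \subseteq M^n$ is a finite union $X' = X'_1 \cup \cdots \cup X'_k$ of definable sets that are definably connected in the subspace topology inherited from the standard topology on $M^n$; one may take the $X'_i$ to be the definably connected components of $X'$.

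Next I would record that a definable continuous image of a definably connected set is definably connected: if $g : A \to B$ is a definable continuous map of definable topological spaces and $A$ is definably connected, then $g(A)$ is definably connected, since any partition of $g(A)$ into two nonempty relatively open definable pieces pulls back, via continuity of $g$, to such a partition of $A$. Applying this to $g := f \restriction X'_i$ — continuous because $f$ is continuous and $X'_i$ carries the subspace topology from $M^n$ — we conclude that each $f(X'_i)$ is a definably connected definable subset of $Y$. Since $f$ is surjective, $f(X') = Y'$, so $Y' = \bigcup_{i=1}^{k} f(X'_i)$ is the desired finite union.

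I do not expect a genuine obstacle here; the only points needing care are keeping straight which topology ``definably connected'' refers to at each stage, and citing the o-minimal decomposition of a definable set into finitely many definably connected components. Note that neither the openness of $f$ nor the Hausdorffness of $Y$ is used in this argument — those hypotheses of admissibility enter elsewhere in Theorem~\ref{addendum}.
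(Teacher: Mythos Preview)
Your proof is correct and follows essentially the same approach as the paper: pull back to a definable subset of $M^n$, decompose into finitely many definably connected pieces via cell decomposition, and push forward using that definable continuous images of definably connected sets are definably connected. The only cosmetic difference is that the paper first invokes Proposition~\ref{subspace-adm} to reduce to the case $Y' = Y$, whereas you work directly with the preimage $X' = f^{-1}(Y')$; as you note, your version avoids appealing to openness of $f$.
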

\begin{proof}
  By Proposition~\ref{subspace-adm}, it suffices to show that $Y$ itself can
  be written as a finite union of definably connected sets. Let $X \to
  Y$ be a map witnessing admissibility, with $X \subseteq M^n$.  Then
  $X$ has finitely many definably connected components by cell
  decomposition.  The image of a definably connected set under a
  definable continuous map is definably connected, so $Y$ also has
  finitely many definably connected components.
\end{proof}

\begin{lemma} \label{indy2}
  Assume $\aleph_1$-saturation.  Let $Y$ be an interpretable set with
  an admissible topology, as witnessed by some map $f : X \to Y$.  Let
  $S$ be a countable set of parameters over which $f, X, Y$ are defined,
  and $T$ be a countable set.  For any point $p$ and any neighborhood $N$
  of $p$, there is a smaller neighborhood $N' \subseteq N$ of $p$ such that
  \begin{equation*}
    \ulcorner N' \urcorner \forkindep_S pT
  \end{equation*}
\end{lemma}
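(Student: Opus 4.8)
The plan is to exploit the fact that the witnessing map $f : X \to Y$ is a continuous open surjection, so that the images $f(B)$ of small open boxes $B \subseteq X$ form a neighborhood basis of $p$ in $Y$, and then to combine this with the box-independence trick of Lemma~\ref{tricks}(\ref{trick-indy}). First I would work over $S$ --- either by naming the elements of $S$ as constants (the language stays countable and the model stays $\aleph_1$-saturated, so Lemma~\ref{tricks} still applies), or by observing that the proof of Lemma~\ref{tricks}(\ref{trick-indy}) localizes to any base set. So we may as well assume $S = \emptyset$ and that $f$, $X$, $Y$ are $0$-definable; we must produce a neighborhood $N' \subseteq N$ of $p$ with $\ulcorner N' \urcorner \forkindep pT$. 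Since $f$ is surjective, fix any $x \in f^{-1}(p) \subseteq X$.

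Next, since $f$ is continuous and $N$ is a neighborhood of $p$, the preimage $f^{-1}(N)$ is a neighborhood of $x$ in $X$. Apply Lemma~\ref{tricks}(\ref{trick-indy}) at the point $x$, taking the countable parameter set there to consist of the (finitely many) coordinates of $x$ and of $p$ together with $T$: this produces a basic open box $B$ with $x \in B \subseteq f^{-1}(N)$ and $\ulcorner B \urcorner \forkindep xpT$. Set $N' := f(B)$. Then $N'$ is open because $f$ is an open map; it contains $p = f(x)$; and $N' = f(B) \subseteq f(f^{-1}(N)) \subseteq N$. Hence $N'$ is a neighborhood of $p$ with $N' \subseteq N$, as required.

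Finally I would transfer the independence from $B$ to $N'$. Because $f$ is $0$-definable, every automorphism fixing $B$ fixes $f(B) = N'$, so $\ulcorner N' \urcorner \in \dcl(\ulcorner B \urcorner)$. From $\ulcorner B \urcorner \forkindep xpT$ we get $\ulcorner B \urcorner \forkindep pT$ by monotonicity, and then $\ulcorner N' \urcorner \forkindep pT$ since $\ulcorner N' \urcorner \in \dcl(\ulcorner B \urcorner)$ (thorn-independence, here just dimension-independence, is symmetric and monotone, so independence from a set passes to independence from anything in its definable closure); undoing the renaming of parameters gives $\ulcorner N' \urcorner \forkindep_S pT$. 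The argument is essentially routine --- there is no serious obstacle. The only points that need care are checking $f^{-1}(p) \neq \emptyset$ (surjectivity of $f$), that the box $B$ can be chosen small enough to sit inside $f^{-1}(N)$ (the boxes of Lemma~\ref{tricks}(\ref{trick-indy}) genuinely form a neighborhood basis), and the bookkeeping of applying Lemma~\ref{tricks} after reducing the parameter-dependent $X$ to a $0$-definable one.
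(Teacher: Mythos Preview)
Your proposal is correct and follows essentially the same route as the paper's proof: lift $p$ to a preimage in $X$, apply Lemma~\ref{tricks}(\ref{trick-indy}) inside $f^{-1}(N)$ to obtain an independent box, and push forward via the open map $f$, using that $\ulcorner N'\urcorner\in\dcl_S(\ulcorner B\urcorner)$ and $p\in\dcl_S(x)$ to transfer independence. The only cosmetic difference is that you name $S$ as constants at the outset, whereas the paper carries $S$ along in the base throughout.
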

\begin{proof}
  Let $\tilde{p}$ be some point in $X$ mapping to $p$.  The set
  $f^{-1}(N)$ is an open neighborhood of $\tilde{p}$, because $f$ is
  continuous.  By Lemma~\ref{tricks}(\ref{trick-indy}) there is some
  smaller neighborhood $\tilde{p} \in U \subseteq f^{-1}(N)$ such that $\ulcorner U \urcorner \forkindep \tilde{p}TS$.  Let $N' = f(U)$.
  This is a neighborhood of $p$ because $f$ is an open map.
  Furthermore,
  \begin{equation*}
    \ulcorner U \urcorner \forkindep \tilde{p}TS \implies \ulcorner U \urcorner \forkindep_S \tilde{p}T \implies \ulcorner N' \urcorner \forkindep_S pT
  \end{equation*}
  because $N'$ is defined from $U$ and $p$ is defined from
  $\tilde{p}$.
\end{proof}

If $X$ is an interpretable set with a definable topology, it makes
sense to talk about the ``local dimension'' $\dim_pX$ of $X$ at any
point $p \in X$.  Namely, the local dimension is the minimum of
$\dim(N)$ as $N$ ranges over neighborhoods of $p$ in $X$.  We can also
talk about the local dimension $\dim_Dp$ of a definable subset $D
\subseteq X$ at a point $p \in D$.  Specifically,
\begin{equation*}
  \dim_pD := \min_N \dim(N \cap D) \qquad \qquad \text{ $N$ a neighborhood of $p$ in $X$}.
\end{equation*}
This is the same as the local dimension at $p$ within the subspace
topology on $D$.

\begin{proposition} \label{local-dim}
  Let $Y$ be an interpretable set with an admissible topology.  If $D$
  is any definable subset of $Y$, then
  \begin{equation*}
    \dim(D) = \max_{p \in D} \dim_p(D).
  \end{equation*}
\end{proposition}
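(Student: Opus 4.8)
The inequality $\max_{p\in D}\dim_p(D)\le\dim(D)$ is immediate, since $N\cap D\subseteq D$ for every neighborhood $N$. So the only content is that \emph{some} $p\in D$ satisfies $\dim_p(D)=\dim(D)$; equivalently, that $D$ has a point every neighborhood of which meets $D$ in a set of full dimension. I would first reduce to the case $D=Y$: by Proposition~\ref{subspace-adm} the subspace topology on $D$ is again admissible, and (as remarked just before the proposition) the local dimension $\dim_p(D)$ computed inside $Y$ agrees with the local dimension of $D$ at $p$ in its own topology; so it suffices to produce a point of $Y$ every neighborhood of which has dimension $\dim(Y)$. Next I would pass to an $\aleph_1$-saturated elementary extension of $M$ and assume the language countable. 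This is harmless for the conclusion: fixing a definable basis $\{B_a\}_{a\in M^k}$ for the topology on $Y$ (admissible topologies are definable), defined over a finite parameter set $S$, and writing $d=\dim(Y)$, the condition $\dim(B_a)\ge d$ is definable in $a$ and $d=\dim(Y)$ is unchanged under elementary extension, so the assertion ``there is $p\in Y$ with $\dim(B_a)\ge d$ whenever $p\in B_a$'' is first-order over $S$; hence if it holds in the extension it holds in $M$, and (since $\dim(B_a)\le\dim(Y)=d$ automatically) it is exactly what we want.

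So assume $M$ is $\aleph_1$-saturated, fix the admissibility witness $f:X\twoheadrightarrow Y$, and let $S$ be a countable set of parameters over which $f,X,Y$ are defined. Choose $p\in Y$ generic over $S$, i.e.\ $\dim(p/S)=\dim(Y)=d$; such $p$ exists by the dimension theory for interpretable sets together with $\aleph_1$-saturation. Let $N$ be any neighborhood of $p$. Applying Lemma~\ref{indy2} with $T=\emptyset$, there is a smaller neighborhood $B\subseteq N$ of $p$ with $\ulcorner B\urcorner\forkindep_S p$, i.e.\ $\dim(p/S\ulcorner B\urcorner)=\dim(p/S)=d$. Since $B$ is $\ulcorner B\urcorner$-definable and contains $p$, this gives $\dim(B)\ge\dim(p/S\ulcorner B\urcorner)=d$; as $B\subseteq Y$ also forces $\dim(B)\le d$, we get $\dim(B)=d$, whence $\dim(N)\ge\dim(B)=d=\dim(Y)$ and trivially $\dim(N)\le\dim(Y)$. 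Thus $\dim(N)=\dim(Y)$ for every neighborhood $N$ of $p$, so $\dim_p(Y)=\dim(Y)$, as required.

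The proof is short once Lemma~\ref{indy2} is in hand, so I do not expect a serious obstacle; the one step worth double-checking is the passage to the saturated model, which relies on the definability of dimension in definable families of interpretable sets (so that ``having a point of full local dimension'' is a first-order property) and on the existence of a generic point of $Y$ over $S$.
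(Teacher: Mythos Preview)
Your proof is correct and follows essentially the same route as the paper: reduce to $D=Y$ via Proposition~\ref{subspace-adm}, pass to an $\aleph_1$-saturated extension using definability of dimension, pick $p$ generic over the defining parameters, and for an arbitrary neighborhood $N$ use Lemma~\ref{indy2} to find a sub-neighborhood whose code is independent from $p$, forcing its dimension to be $\dim(Y)$. The only cosmetic difference is that the paper justifies the transfer to the saturated model by observing directly that $\{p\in Y:\dim_p(Y)=k\}$ is definable, whereas you spell this out via a definable basis; both are fine.
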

\begin{proof}
  By Proposition~\ref{subspace-adm} we may assume $D = Y$.  The
  maximum of the local dimensions is certainly at most $\dim(Y)$, so
  we only need to show that there is some point $p \in Y$ at which
  $\dim_p(Y) = \dim(Y)$.  Because of the definability of dimension,
  \begin{equation*}
    \{p \in Y : \dim_p(Y) = k\}
  \end{equation*}
  is definable for each $k$, in particular for $k = \dim(Y)$.
  Therefore we may pass to an $\aleph_1$-saturated elementary
  extension.  Let $S$ be a finite set of parameters over which $Y$ is
  defined, and let $p \in Y$ be a point such that $\dim(p/S) =
  \dim(Y)$.  We claim that the local dimension of $Y$ at $p$ is
  $\dim(Y)$.  Let $N \subseteq Y$ be any neighborhood of $p$; we will
  show that $\dim(N) = \dim(Y)$.  By Lemma~\ref{indy2}, there is a
  smaller neighborhood $N'$ of $p$ such that
  \begin{equation*}
    \ulcorner N' \urcorner \forkindep_S p
  \end{equation*}
  Therefore, $\dim(p/\ulcorner N' \urcorner S) = \dim(p/S) = \dim(Y)$.
  Because $p$ lies in $N'$,
  \begin{equation*}
    \dim(N') \ge \dim( p/\ulcorner N' \urcorner S) = \dim(p/S) = \dim(Y).
  \end{equation*}
  On the other hand, $Y \supseteq N \supseteq N'$, so
  \begin{equation*}
    \dim(Y) \ge \dim(N) \ge \dim(N').
  \end{equation*}
  Therefore the inequalities are equalities and $\dim(N) = \dim(Y)$.
  As $N$ was an arbitrarily small neighborhood of $p$, it follows that
  the local dimension $\dim_p(Y)$ agrees with $\dim(Y)$.
\end{proof}

\begin{proposition} \label{dim-bound}
  Let $Y$ be an interpretable set with an admissible locally Euclidean
  topology.
  \begin{enumerate}
  \item If $D$ is any definable subset of $Y$, then $\dim \partial D <
    \dim D$.
  \item If $D$ is any definable subset of $Y$, then $\dim \overline{D}
    = \dim D$ and $\dim \bd(D) < \dim Y$.
  \item \label{tres} Assuming saturation: if $D$ is a type-definable
    subset of $Y$ of dimension $d$, then $D$ is contained in a
    definable closed set of dimension $d$.
  \end{enumerate}
\end{proposition}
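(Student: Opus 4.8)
The plan is to prove (1) directly --- this is the step that genuinely uses local Euclideanity --- and then obtain (2) and (3) from it. I expect (1) to be the main obstacle: frontiers in an arbitrary definable topological space need not drop dimension, so the real work is to transport the o-minimal frontier inequality from $M^k$ into $Y$ through a Euclidean chart, after which Proposition~\ref{local-dim} will let us run the whole estimate at a single well-chosen point. Throughout, recall that admissible topologies are definable (Lemma~\ref{quot-def}), so the closure, frontier, and interior of a definable subset of $Y$ are again definable, and Proposition~\ref{local-dim} applies to them.

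For (1): we may assume $D \neq \emptyset$ and $\partial D \neq \emptyset$. By Proposition~\ref{local-dim} applied to the definable set $\partial D$, choose $p \in \partial D$ with $\dim_p \partial D = \dim \partial D$. Local Euclideanity gives an open neighborhood $N \ni p$ and a definable homeomorphism $\varphi : N \to V$ onto an open $V \subseteq M^k$. Since $N$ is open in $Y$, one checks that $\overline{D \cap N} \cap N = \overline{D} \cap N$, whence the frontier of $D \cap N$ taken \emph{inside $N$} is exactly $\partial D \cap N$. Under $\varphi$ this maps to the frontier, taken inside $V$, of $D' := \varphi(D \cap N)$; since $V$ is open in $M^k$, that set is contained in the frontier of $D'$ inside $M^k$, which has dimension $< \dim D'$ by the o-minimal frontier inequality (\cite{lou-o-minimality} Theorem~4.1.8) --- note $D' \neq \emptyset$ because $p \in \overline{D}$ forces $N$ to meet $D$. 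As dimension is invariant under definable bijections (\cite{interpdim}), $\dim(\partial D \cap N) < \dim D' = \dim(D \cap N) \le \dim D$, while $\dim \partial D = \dim_p \partial D \le \dim(\partial D \cap N)$ by choice of $p$. Hence $\dim \partial D < \dim D$.

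For (2): $\overline{D} = D \cup \partial D$ together with (1) gives $\dim \overline{D} = \dim D$. For the boundary, write $\bd(D) = \partial D \cup (D \setminus \ter(D))$ and note $D \setminus \ter(D) = D \cap \overline{Y \setminus D} \subseteq \partial(Y \setminus D)$; if $Y \setminus D \neq \emptyset$, then (1) applied to $Y \setminus D$ gives $\dim(D \setminus \ter(D)) < \dim(Y \setminus D) \le \dim Y$ (and if $Y = D$ then $\bd(D) = \emptyset$), so with $\dim \partial D < \dim D \le \dim Y$ we get $\dim \bd(D) < \dim Y$. For (3): it is enough to produce a definable $D^{+} \supseteq D$ with $\dim D^{+} = d$, since then $\overline{D^{+}}$ is a definable closed superset of $D$ with $\dim \overline{D^{+}} = \dim D^{+} = d$ by (2). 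Write $D = \bigcap_j D_j$ with the $D_j$ definable over a small set $A$. If every $A$-definable superset of $D$ had dimension $> d$, then the partial $A$-type
\begin{equation*}
  \Gamma(x) := \{x \in D_j\}_{j} \cup \{\, x \notin C : C \text{ is } A\text{-definable and } \dim C \le d \,\}
\end{equation*}
would be consistent --- otherwise compactness produces $A$-definable $C_1, \dots, C_\ell$ of dimension $\le d$ with $D \subseteq C_1 \cup \cdots \cup C_\ell$, contradicting the hypothesis --- and any realization $a$ of $\Gamma$ in a sufficiently saturated model would lie in $D$ yet avoid every $A$-definable set of dimension $\le d$, so $\dim(a/A) \ge d+1$, contradicting $\dim D = d$. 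Hence some $A$-definable $D^{+} \supseteq D$ has $\dim D^{+} \le d$, and then $\dim D^{+} = d$ since $D^{+} \supseteq D$.
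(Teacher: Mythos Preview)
Your proof is correct and follows essentially the same route as the paper: for (1) you use Proposition~\ref{local-dim} to localize at a point of maximal local dimension, pass through a Euclidean chart, and invoke the o-minimal frontier inequality; for (2) you decompose $\bd(D)$ into $\partial D$ and (a subset of) $\partial(Y\setminus D)$; for (3) you produce a definable superset of the same dimension and take its closure. The only differences are cosmetic---you spell out the compactness argument behind ``general properties of dimension'' in (3), and you are more explicit about matching up frontiers under the chart in (1)---but the strategy is identical.
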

\begin{proof}~
  \begin{enumerate}
  \item Let $k = \dim \partial D$.  By Proposition~\ref{local-dim},
    there is a point $x \in \partial D$ such that $\dim_x(\partial D)
    = k$.  Let $U$ be an open neighborhood of $x$ which is definably
    homeomorphic to an open subset of $M^n$ for some $n$.
    Transferring the situation along the homeomorphism, and using the
    analogous fact for definable sets (=
    Lemma~\ref{tricks}(\ref{trick-dim}) or \cite{lou-o-minimality}
    Theorem 4.1.8), we see that $\dim(D \cap U) > k$.
  \item These bounds follow because $\overline{D} = D \cup \partial
    D$, and $\bd(D) = \partial D \cup \partial (Y \setminus
    D)$.
  \item By general properties of dimension, $D \subseteq D'$ for some
    definable subset $D'$ of dimension $d$.  Then $D \subseteq
    \overline{D'}$ and $\dim \overline{D'} = d$.
  \end{enumerate}
\end{proof}

Using Lemma~\ref{indy2} and Proposition~\ref{dim-bound}, one can
transfer facts about ``generic behavior'' from the definable setting
to the admissible interpretable setting.  We give two examples:
\begin{itemize}
\item Definable subsets are Euclidean at generic points
\item Definable functions are continuous at generic points in their
  domain.
\end{itemize}

\begin{remark}\label{euclid-local-euclid}
  If $D$ is any definable subset of $M^n$, then there is a definable
  full open subset $D' \subseteq D$ such that $D'$ is locally
  Euclidean as a subspace of $M^n$.  (Here, we mean that $D'$ is open
  in $D$, not open in $M^n$.)
\end{remark}
\begin{proof}
  Write $D$ as a disjoint union of cells $\bigcup_{i = 1}^k C_i$ by
  cell decomposition.  Each cell $C_i$ is locally Euclidean in
  isolation.  Take $D'$ to be $D \setminus \overline{\bigcup_{i = 1}^k
    \partial C_i}$, where the closure and frontier are with respect to the topology on $M^n$.
		This is open as a subset of $D'$, and a full open
  subset by standard dimension bounds.  Every point $p$ in $D'$ is in
  the closure of exactly one $C_i$, so the Euclideanity of $C_i$ at
  $p$ implies the Euclideanity of $D'$ at $p$.
\end{proof}

\begin{lemma} \label{thorn}
  Let $Y$ be an interpretable set with admissible locally Euclidean
  topology.  If $D$ is any definable subset of $Y$, then there is a
  definable full open subset $D' \subseteq D$ such that the subspace
  topology on $D'$ is locally Euclidean.
\end{lemma}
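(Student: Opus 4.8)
The plan is to realise the subspace topology on $D$ as a quotient topology on a definable subset of $M^n$, and then to quote Theorem~\ref{better-theorem}. Concretely, I would fix a definable continuous open surjection $f : X \twoheadrightarrow Y$ with $X \subseteq M^n$ witnessing admissibility, write $E_f$ for the equivalence relation ``$f(x) = f(x')$'' on $X$, and set $X_D := f^{-1}(D)$. Since $f$ is open, the restriction $f \restriction X_D : X_D \twoheadrightarrow D$ is again a continuous open surjection (for $U = U' \cap X_D$ with $U'$ open in $X$ one has $f(U) = f(U') \cap D$), hence an identifying map, so that the subspace topology on $D$ coincides with the quotient topology on $X_D/E_f$ and $E_f \restriction X_D$ is an open equivalence relation. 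The difficulty with applying Theorem~\ref{better-theorem} to $X_D$ directly is that $\dim X_D$ may exceed $\dim D$, so that the full open subset of $X_D$ furnished by the theorem need not have full image in $D$.

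To repair this, I would first cut down to the locus of generic fibre dimension. The function $q \mapsto \dim f^{-1}(q)$ is definable and finitely-valued on $D$, so there is a unique value $m$ with $\{q \in D : \dim f^{-1}(q) = m\}$ full in $D$; set
\[ D_1 := D \setminus \overline{\{q \in D : \dim f^{-1}(q) \ne m\}}, \]
with the closure taken in $Y$. By Proposition~\ref{dim-bound}(2), $D_1$ is a full open definable subset of $D$; every fibre of $f$ over a point of $D_1$ has dimension exactly $m$; and so, with $X_{D_1} := f^{-1}(D_1)$, one gets $\dim X_{D_1} = \dim D_1 + m = \dim D + m$.

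Next I would apply Theorem~\ref{better-theorem} to $X_{D_1} \subseteq M^n$ and $E_f \restriction X_{D_1}$, obtaining a definable full open $X' \subseteq X_{D_1}$ whose quotient $X'/E_f$ carries a Hausdorff, locally Euclidean quotient topology. As before $E_f \restriction X_{D_1}$ is an open equivalence relation and $X'$ is open in $X_{D_1}$, so Lemma~\ref{niceness-of-o-eq} gives that the quotient topology on $X'/E_f$ equals its subspace topology inside $X_{D_1}/E_f = D_1$ and that $X'/E_f \hookrightarrow D_1$ is an open embedding. I would then take $D' := f(X')$: this is open in $D_1$, hence open in $D$, and its subspace topology (from $D$, equivalently from $Y$) is Hausdorff and locally Euclidean.

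Finally I would verify that $D'$ is full in $D$. Since $D_1$ is full in $D$ it suffices to bound $\dim(D_1 \setminus D')$. Put $B := X_{D_1} \setminus X'$, so $\dim B < \dim X_{D_1} = \dim D + m$, and $B' := \{x \in B : f^{-1}(f(x)) \cap X_{D_1} \subseteq B\}$. For $q \in D_1$ one has $q \notin D'$ exactly when the fibre $f^{-1}(q) \cap X_{D_1}$ avoids $X'$, that is, lies in $B$; chasing this shows $D_1 \setminus D' = f(B')$, and that each fibre of $f \restriction B'$ over a point of $f(B') \subseteq D_1$ coincides with the corresponding fibre of $f \restriction X_{D_1}$, of dimension $m$. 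Hence $\dim B' = \dim f(B') + m$, so $\dim(D_1 \setminus D') = \dim B' - m \le \dim B - m < \dim D$, as needed. I expect the main obstacle to be precisely this dimension accounting across $f$: it is what forces passing to the constant-fibre-dimension locus $D_1$, since for a general $f$ a codimension-one piece of $X_D$ could in principle project onto all of $D$.
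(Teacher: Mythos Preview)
Your approach is valid and genuinely different from the paper's, but there is one small slip. The claim ``there is a unique value $m$ with $\{q \in D : \dim f^{-1}(q) = m\}$ full in $D$'' is not true in general: the fibre-dimension strata $D_k := \{q \in D : \dim f^{-1}(q) = k\}$ form a finite definable partition of $D$, but several of them may have dimension $\dim D$ (think of an $f$ whose domain is a disjoint union of pieces of different dimensions sitting over disjoint clopen halves of $D$). The fix is easy and in the spirit of what you already wrote: replace $D_1$ by the full open subset $D \setminus \bigcup_k \partial_Y D_k$, on which each $D_k$ is relatively clopen (here Proposition~\ref{dim-bound}(1), applied inside $Y$, gives $\dim \partial_Y D_k < \dim D_k \le \dim D$, and then take closures to get an open set). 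On each clopen piece the fibre dimension is constant, and your argument then runs verbatim on each piece; the union of the resulting $D'$'s is the desired full open subset. Your own final paragraph shows you already understood that constant fibre dimension is exactly what makes the dimension accounting work, so this is only a presentational issue.

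For comparison, the paper argues pointwise rather than globally. It fixes a generic $a \in D$, uses the local Euclideanity of $Y$ to pick a chart $\iota : U \hookrightarrow M^n$ around $a$ with $\ulcorner U \urcorner \ulcorner \iota \urcorner \forkindep a$ (via Lemma~\ref{indy2}), and then observes that $\iota(U \cap D)$ is a definable subset of $M^n$ in which $\iota(a)$ is generic, hence Euclidean there by Remark~\ref{euclid-local-euclid}; pulling back gives Euclideanity of $D$ at $a$. One then passes to a full open $D'$ using the ind-definability of the Euclidean locus and Proposition~\ref{dim-bound}(\ref{tres}). So the paper exploits the locally Euclidean hypothesis on $Y$ directly through charts, whereas you exploit it only through Proposition~\ref{dim-bound} and instead feed the problem back into Theorem~\ref{better-theorem}. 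Your route is more structural and makes the dependence on the main theorem explicit; the paper's route is shorter and avoids the fibre-dimension bookkeeping entirely.
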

\begin{proof}
  Without loss of generality, we may assume the ambient model is
  sufficiently saturated, and that $Y$ and $D$ are 0-definable.
  \begin{claim}
    If $a \in D$ is generic in $D$, then $D$ is locally Euclidean at
    $a$.
  \end{claim}
  \begin{proof}
    By local Euclideanity of $Y$, there is an open neighborhood $a \in
    U \subseteq Y$ definably homeomorphic to an open in some $M^n$.
    By Lemma~\ref{indy2}, we may shrink $U$ and assume that $a
    \forkindep \ulcorner U \urcorner$.  Let $\iota : U \hookrightarrow
    M^n$ be the definable open embedding.  Moving $\iota$ by an
    automorphism fixing $\ulcorner U \urcorner$, we may assume $a
    \forkindep \ulcorner U \urcorner \ulcorner \iota \urcorner$.

    Therefore we can name $\ulcorner U \urcorner$ and $\ulcorner \iota
    \urcorner$ as constants, and assume that $U$ and $\iota$ are
    0-definable, without losing the fact that $a$ is generic.  Now
    because $a$ is generic in $U \cap D$, the image $\iota(a)$ is
    generic in $\iota(U \cap D)$.  By
    Remark~\ref{euclid-local-euclid}, $\iota(U \cap D)$ is Euclidean
    at $\iota(a)$.  Transferring things back along $\iota^{-1}$, we
    see that $D$ is Euclidean at $a$, proving the claim.
  \end{proof}
  Now let $D''$ be the locally Euclidean locus of $D$.  Then $D''$ is
  an ind-definable subset of $D$, i.e., $D \setminus D''$ is a
  type-definable set.  By the claim, $D \setminus D''$ has lower
  dimension than $D$.  Thus by Proposition \ref{dim-bound}(\ref{tres})
  there is a definable closed set $F$ containing $D \setminus D''$,
  with $\dim F < \dim D$.  Take $D' = D \setminus F$.  Then $D'$ is a
  full open subset of $D$, and $D' \subseteq D''$.
\end{proof}

We recall another basic fact about o-minimality:
\begin{remark} \label{basic-fact}
  Let $U$ and $U'$ be 0-definable open subsets of powers of $M$, and
  let $f$ be a 0-definable partial map from $U$ to $U'$.  Suppose $a$
  is generic in $U$ and that $f$ is defined at $a$.  Then $f$ is
  defined and continuous on an open neighborhood of $a$.
\end{remark}

\begin{proposition} \label{oof}
  Let $Y$ and $Y'$ be two interpretable sets with admissible locally
  Euclidean topologies, and $f$ be a definable map from $Y$ to $Y'$.
  Then $Y$ can be written as a finite disjoint union of definable
  locally closed sets, on which the restriction of $f$ is continuous.
  More generally, this holds when $Y$ is a definable subspace of an
  admissible locally Euclidean space.
\end{proposition}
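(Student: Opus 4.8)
The plan is to reduce to the statement for definable subsets, then bootstrap to all of $Y$ by a dimension induction. First I would handle the ``more general'' case by noting that a definable subspace of an admissible locally Euclidean space is itself (by Proposition~\ref{subspace-adm}) admissible, and, after discarding a lower-dimensional closed set via Lemma~\ref{thorn}, locally Euclidean; so it is no loss to assume $Y$ itself is locally Euclidean, and to prove the claim by induction on $\dim Y$. The base case $\dim Y = 0$ is trivial since $Y$ is then finite (hence discrete) and $f$ is automatically continuous. For the inductive step, it suffices to produce a single definable \emph{open} subset $Y_1 \subseteq Y$ which is locally closed in $Y$, on which $f$ is continuous, and whose complement $Y \setminus Y_1$ has dimension $< \dim Y$: applying the inductive hypothesis to $Y \setminus Y_1$ (which is a definable subspace of the admissible locally Euclidean space $Y$) decomposes the complement into finitely many definable locally closed pieces on which $f$ is continuous, and locally closed subsets of $Y \setminus Y_1$ are locally closed in $Y$ as well because $Y \setminus Y_1$ is closed in $Y$. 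Adding $Y_1$ to this list finishes the induction.

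To find such a $Y_1$, I would work in a sufficiently saturated model with $Y, Y', f$ all $0$-definable, and show that $f$ is continuous at every generic point of $Y$. Fix generic $a \in Y$ with image $a' = f(a) \in Y'$. Using local Euclideanity of $Y'$ at $a'$, pick an open $U' \ni a'$ definably homeomorphic via some $\iota'$ to an open subset of $M^{m}$; using local Euclideanity of $Y$ at $a$, pick an open $U \ni a$ with $f(U) \subseteq U'$ (by continuity this can be arranged, shrinking $U$) and a definable homeomorphism $\iota : U \to$ open $\subseteq M^n$. By Lemma~\ref{indy2} we may shrink $U$ and $U'$ and then move $\iota, \iota'$ by automorphisms so that $a \forkindep \ulcorner U \urcorner \ulcorner \iota \urcorner \ulcorner U' \urcorner \ulcorner \iota' \urcorner$; naming all these canonical parameters as constants, $a$ remains generic in $U$. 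Now $\iota' \circ f \circ \iota^{-1}$ is a $0$-definable partial map between open subsets of powers of $M$, defined at the generic point $\iota(a)$, so by Remark~\ref{basic-fact} it is continuous on a neighborhood of $\iota(a)$; transporting back along $\iota$ and $\iota'$, $f$ is continuous on a neighborhood of $a$ in $Y$.

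Hence the set $C$ of points of $Y$ at which $f$ is continuous is an ind-definable (over $\emptyset$) subset of $Y$ containing every generic point, so $Y \setminus C$ is type-definable of dimension $< \dim Y$; by Proposition~\ref{dim-bound}(\ref{tres}) it is contained in a definable closed set $F$ with $\dim F < \dim Y$. Take $Y_1 = Y \setminus F$: it is open (hence locally closed) in $Y$, $f$ is continuous on $Y_1$, and $\dim(Y \setminus Y_1) = \dim F < \dim Y$, which is exactly what the inductive step requires. (One should check that continuity of $f$ \emph{as a function on $Y_1$ with the subspace topology} follows from continuity at each point of $Y_1$ in $Y$: this is immediate since $Y_1$ is open, so the subspace topology on $Y_1$ is the restriction of nearby opens of $Y$.) The only mildly delicate point is the automorphism-juggling in the second paragraph that lets us assume all the chart data is $0$-definable while keeping $a$ generic; this is the same maneuver already used in the proof of Lemma~\ref{thorn}, so I expect it to go through without trouble.
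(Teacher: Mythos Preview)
Your overall strategy matches the paper's: induct on $\dim Y$, use Lemma~\ref{thorn} to reduce to the locally Euclidean case, show $f$ is continuous at every generic point by transporting along local charts and invoking Remark~\ref{basic-fact}, and then extract a full open piece on which $f$ is continuous. Your endgame via Proposition~\ref{dim-bound}(\ref{tres}) is equivalent to the paper's use of Proposition~\ref{dim-bound}(2) to pass to the interior of the (in fact definable, not merely ind-definable) continuous locus.

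There is, however, a genuine circular step. In the second paragraph you write: ``pick an open $U \ni a$ with $f(U) \subseteq U'$ (by continuity this can be arranged, shrinking $U$).'' But continuity of $f$ at $a$ is exactly what you are trying to establish, so you cannot appeal to it here; without it there is no reason any neighborhood $U$ of $a$ should satisfy $f(U) \subseteq U'$. The paper sidesteps this by \emph{not} requiring $f(U)\subseteq U'$: it takes any Euclidean chart $U \ni a$, regards $f \restriction (U \cap f^{-1}(U'))$ as a \emph{partial} map from $U$ to $U'$, transports it along $\iota$ and $\iota'$ to a $0$-definable partial map between open subsets of powers of $M$ defined at the generic point $\iota(a)$, and then applies Remark~\ref{basic-fact}, which is explicitly stated for partial maps and yields both definedness and continuity on a neighborhood. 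If you drop the offending parenthetical and read your $\iota' \circ f \circ \iota^{-1}$ as a partial map, your argument goes through and coincides with the paper's.
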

\begin{proof}
  By induction on $\dim(Y)$ it suffices to show that $f$ is continuous
  on a full open subset of $Y$.  By Lemma~\ref{thorn}, we may assume
  $Y$ is locally Euclidean.  By Proposition~\ref{dim-bound}, the
  interior of any full subset is a full subset of $Y$, so it suffices
  to show that the continuous locus of $f$ is a full subset of $Y$.

  Without loss of generality, we may assume that everything is defined
  over $\emptyset$ and that the ambient model is $\aleph_1$-saturated.
  It suffices to show that $f$ is continuous at generic points of $Y$.
  Fix some generic $y \in Y$.  By locally Euclideanity, there are open
  neighborhoods $U$ of $y$ and $U'$ of $f(y)$ admitting open
  embeddings into powers of $M$.  By Lemma~\ref{indy2} we may shrink
  $U$ and $U'$ in such a way that
  \begin{equation*}
    \ulcorner U \urcorner \forkindep y \text{ and }
    \ulcorner U' \urcorner \forkindep y \ulcorner U \urcorner
  \end{equation*}
  Thus $\ulcorner U \urcorner \ulcorner U' \urcorner \forkindep y$.
  Let $\iota$ and $\iota'$ be definable open embeddings from $U$ and
  $U'$ into powers of $M$.  Moving $\iota$ and $\iota'$ by an
  automorphism over $\ulcorner U \urcorner \ulcorner U' \urcorner$, we
  may assume that
  \begin{equation*}
    \ulcorner \iota \urcorner \ulcorner \iota' \urcorner
    \forkindep_{\ulcorner U \urcorner \ulcorner U' \urcorner} y
  \end{equation*}
  and so
  \begin{equation*}
    \ulcorner \iota \urcorner \ulcorner \iota' \urcorner \ulcorner U
    \urcorner \ulcorner U' \urcorner \forkindep y
  \end{equation*}
  By naming constants, we may assume that $U$, $U'$, $\iota$, $\iota'$
  are all 0-definable, and $y$ is still generic in $Y$.

  Now $f \restriction (U \cap f^{-1}(U'))$ is a partial function from
  $U$ to $U'$, defined at $y$.  Transferring things along the open
  embeddings $\iota$, $\iota'$, we reduce to the case where $U$ and
  $U'$ are open subsets of powers of $M$, reducing to the situation of
  Remark~\ref{basic-fact} above.
\end{proof}

This completes the proof of Theorem~\ref{addendum} in the
introduction, which is merely a compilation of Proposition
\ref{adm-exist}, Lemma~\ref{quot-def}, Propositions \ref{finite-pi-0},
\ref{dim-bound}, \ref{local-dim}, and \ref{oof}.

We close with a few open questions:
\begin{question}
Do Propositions~\ref{dim-bound} and \ref{oof} hold without the local Euclideanity assumption?
\end{question}
\begin{question}
For ``definable spaces'' in the sense of Peterzil and Steinhorn \cite{pz-definable-space}, is our definition of ``definable compactness'' (Definition~\ref{d3.2}) equivalent to Peterzil and Steinhorn's definition using completable curves?
\end{question}

\bibliographystyle{plain}
\bibliography{mybib}{}

\end{document}